\documentclass[11pt]{amsart}

\usepackage{xy}
\usepackage{array}
\usepackage{amssymb}
\usepackage{color}
\usepackage{psfrag}
\usepackage{graphicx}

\newcounter{mbcommc}
\newcounter{rmcommc}

\xyoption{all}

\newtheorem{lemma}{Lemma}[section]
\newtheorem{theorem}[lemma]{Theorem}
\newtheorem{corollary}[lemma]{Corollary}

\newtheorem{prop}[lemma]{Proposition}
\newtheorem{example}[lemma]{Example}
\newtheorem{remark}[lemma]{Remark}
\newtheorem{definition}[lemma]{Definition}

\newcommand{\Cox}{\operatorname{Cox}}

\newcommand{\op}{{\textit{op}}}

\newcommand{\id}{\operatorname{id}}
\newcommand{\B}{\mathcal{B}}
\newcommand{\G}{\mathcal{G}}

\textwidth 140mm
\oddsidemargin 0.3in

\newcommand{\HVCenter}[1]{\setbox0=\hbox{#1}%
  \dimen0=\wd0%
  \dimen1=\ht0%
  \divide\dimen0 by 2%
  \divide\dimen1 by 2%
  \hskip -\dimen0%
  \lower \dimen1%
  \box0%
  \hskip -\dimen0}
\newcommand{\HBCenter}[1]{\setbox0=\hbox{#1}%
  \dimen0=\wd0%
  \dimen1=\ht0%
  \divide\dimen0 by 2%
  \divide\dimen1 by 2%
  \hskip -\dimen0%
  \box0%
  \hskip -\dimen0}
\newcommand{\HTCenter}[1]{\setbox0=\hbox{#1}%
  \dimen0=\wd0%
  \dimen1=\ht0%
  \divide\dimen0 by 2%
  \hskip -\dimen0%
  \lower \dimen1%
  \box0%
  \hskip -\dimen0}
\newcommand{\RBCenter}[1]{\setbox0=\hbox{#1}%
  \dimen0=\wd0%
  \dimen1=\ht0%
  \hskip -\dimen0%
  \box0%
  \hskip -\dimen0}
\newcommand{\LTCenter}[1]{\setbox0=\hbox{#1}%
  \dimen0=\wd0%
  \dimen1=\ht0%
  \lower \dimen1%
  \box0%
  \hskip -\dimen0}
\newcommand{\RTCenter}[1]{\setbox0=\hbox{#1}%
  \dimen0=\wd0%
  \dimen1=\ht0%
  \hskip -\dimen0%
  \lower \dimen1%
  \box0%
  \hskip -\dimen0}

\begin{document}

\title[Reflection group presentations arising from cluster algebras]{
Reflection group presentations arising from cluster algebras}

\author[Barot]{Michael Barot}
\address{
Instituto de Matem\'{a}ticas
Universidad Nacional Aut\'{o}noma de M\'{e}xico
Ciudad Universitaria, M\'{e}xico, Distrito Federal, C.P. 04510
M\'{e}xico
}
\email{barot@matem.unam.mx}

\author[Marsh]{Robert J. Marsh}
\address{School of Mathematics \\
University of Leeds \\
Leeds LS2 9JT \\
England
}
\email{marsh@maths.leeds.ac.uk}

\keywords{Reflection group, Weyl group, finite type, Dynkin diagram, cluster algebra,
companion basis, quasi-Cartan companion, mutation, diagram, presentation, cycle, Coxeter
graph}

\subjclass[2010]{Primary: 13F60, 20F55, 51F15; Secondary: 16G20}

\begin{abstract}
We give a presentation of a finite crystallographic reflection group in
terms of an arbitrary seed in the corresponding cluster algebra
of finite type and interpret the presentation in terms of companion bases
in the associated root system.
\end{abstract}

\date{1 March 2013}

\thanks{
This work was supported by
DGAPA, Universidad Nacional Aut\'onoma de M\'exico,
the Engineering and Physical Sciences Research Council
[grant number EP/G007497/1] and the Institute for Mathematical
Research (FIM, Forschungsinstitut f\"{u}r Mathematik) at the
ETH, Z\"{u}rich.
}

\maketitle

%==============================================================================
\section*{Introduction}
The article~\cite{fominzelevinsky2} proved that cluster algebras of finite type
can be classified by the Dynkin diagrams. A matrix (the \emph{Cartan counterpart})
is associated to each seed in such a cluster algebra, and the cluster algebra
associated to a Dynkin diagram $\Delta$ is characterised by the fact that it contains
seeds whose Cartan counterparts coincide with the Cartan matrix of $\Delta$
(up to simultaneous permutation of the rows and columns).

It is well known that the Dynkin diagrams also classify the finite crystallographic
reflection groups (see e.g.~\cite{humphreys}). Our main result is a presentation of
the reflection group associated to a Dynkin diagram in terms of an arbitrary
seed in the corresponding cluster algebra.
The presentation is given in terms of an edge-weighted oriented graph, known as
a \emph{diagram}, which is associated by Fomin and Zelevinsky to the seed.

If the Cartan counterpart of the seed is the Cartan matrix, then our presentation
reduces to the usual Coxeter presentation of the reflection group. However, in general,
the diagram of a seed contains cycles, and the presentation includes extra relations
arising from the chordless cycles in the diagram (i.e.\ induced subgraphs which are
cycles).

More precisely, if $\Gamma$ is the diagram associated to a seed in a cluster algebra
of finite type, we define, for vertices $i,j$ of $\Gamma$,
$$m_{ij}=
\begin{cases}
2 & \text{if $i$ and $j$ are not connected;} \\
3 & \text{if $i$ and $j$ are connected by an edge of weight $1$;} \\
4 & \text{if $i$ and $j$ are connected by an edge of weight $2$;} \\
6 & \text{if $i$ and $j$ are connected by an edge of weight $3$.}
\end{cases}
$$
Let $W(\Gamma)$ to be the group with generators $s_i$, $i$ a vertex of $\Gamma$,
subject to the following relations (where $e$ denotes the identity element):
\begin{enumerate}
\item[(1)] $s_i^2=e$ for all $i$;
\item[(2)] $(s_is_j)^{m_{ij}}=e$ for all $i\not=j$;
\item[(3)] For any chordless cycle $C$ in $\Gamma$:
$$i_0\xrightarrow{w_1} i_1\xrightarrow{w_2} \cdots \rightarrow i_{d-1}\xrightarrow{w_0} i_0,$$
where either all of the weights are $1$, or $w_0=2$, we have:
$$(s_{i_0}s_{i_{1}}\cdots s_{i_{d-2}}s_{i_{d-1}}s_{i_{d-2}}\cdots s_{i_{1}})^2=e.$$
\end{enumerate}

\noindent \textbf{Theorem A} \,\,
\emph{Let $\Gamma$ be the diagram associated to a seed in a cluster algebra of finite type.
Then $W(\Gamma)$ is isomorphic to the corresponding reflection group.}
\vskip 0.3cm
This work was mainly motivated by the notions of quasi-Cartan
companion~\cite{bgz} and companion basis~\cite[Defn.\ 4.1]{parsons}
(see also~\cite{parsonsthesis}). A companion basis for a skew-symmetric
matrix $B$ arising in a seed of the cluster algebra is a set $S$ of roots in the
corresponding root system which form a basis for the root lattice and for which the
matrix $A$ whose entries are $(\alpha,\beta^{\vee})$ for $\alpha,\beta\in S$
is a positive quasi-Cartan companion matrix for $B$ in the sense of~\cite{bgz}.
This means that the diagonal entries are equal to $2$, that the values of the off-diagonal
entries in $A$ coincide with the corresponding entries of $B$ up to sign,
and that the symmetrisation of the matrix is positive definite. Quasi-Cartan
companions are used in~\cite{bgz} to give criteria for whether a cluster
algebra is of finite type and companion bases were used
in~\cite{parsonsthesis,parsons} to describe the dimension vectors of indecomposable modules
over cluster-tilted algebras in type $A$ (dimension vectors of indecomposable
modules over cluster-tilted algebras in the general concealed case were also independently
calculated later in~\cite{ringel}).

The reflections corresponding to the elements of a companion basis associated to
a seed in a finite type cluster algebra generate the corresponding
reflection group. We show that the generators in a presentation of the
reflection group as considered here can be regarded as this set of reflections.
This perspective also allows us to show that, in the simply-laced
case, the presentations obtained here coincide with root basis
presentations of crystallographic reflection groups (amongst others) which
were obtained in~\cite{cst} using signed graphs.

We were also motivated by the article~\cite{bm}, which gives a presentation of a complex
semisimple Lie algebra of simply-laced Dynkin type $\Delta$ in terms of any positive definite unit form
of type $\Delta$ (i.e.\ equivalent to the unit form associated to $\Delta$).
This presentation also contains relations arising from chordless cycles, in this case
arising in the bigraph describing the unit form.

The article is organised as follows. In Section 1, we recall some of the theory of
cluster algebras of finite type from~\cite{fominzelevinsky2}. In Section 2, we collect
together some of the properties of the seeds of such cluster algebras. In particular,
we consider the diagrams associated to such seeds in~\cite{fominzelevinsky2} and the
way in which they mutate, as well as the oriented cycles appearing in them.
In Section 3 we associate a group to an arbitrary seed in the cluster algebra, using
generators and relations.
In Section 4 we find an efficient subset of the relations sufficient
to define the group (this is the set of relations given above).
In Section 5 we show that the group defined is invariant (up
to isomorphism) under mutation of the seed.
Since the group associated to a seed whose Cartan counterpart
is a Cartan matrix is the reflection group corresponding to the cluster algebra,
it follows that the group associated to any seed is isomorphic to this reflection
group. In Section 6 we explain the connections with quasi-Cartan companion
matrices and companion bases and with the extended Coxeter groups
associated to signed graphs in~\cite{cst}. This also gives an alternative proof
of the main result in the simply-laced case.

%==============================================================================

\section{Cluster algebras of finite type}
We first recall the definition of a (skew-symmetrisable)
\emph{cluster algebra} without coefficients, as defined
by Fomin-Zelevinsky~\cite{fominzelevinsky1}.

Let $\mathbb{F}=\mathbb{Q}(u_1,u_2,\ldots ,u_n)$ be the field of rational
functions in $n$ indeterminates over $\mathbb{Q}$.
Recall that a square matrix $B$ is said to be \emph{skew-symmetrisable}
if there is a diagonal matrix $D$ of the same size with positive diagonal
entries such that $DB$ is skew-symmetric.
We consider pairs $(\mathbf{x},B)$, where
$\mathbf{x}=\{x_1,x_2,\ldots ,x_n\}$ is a free generating set of $\mathbb{F}$
and $B$ is an $n\times n$ skew-symmetrisable matrix.

Given $k\in [1,n]=\{1,2,\ldots ,n\}$, a new such pair
$\mu_k(\mathbf{x},B)=(\mathbf{x}',B')$, the \emph{mutation} of
$(\mathbf{x},B)$ at $k$, is defined as follows.
We have:
$$B'_{ij}=
\begin{cases}
-B_{ij} & \text{if $i=k$ or $j=k$;} \\
B_{ij}+\frac{|B_{ik}|B_{kj}+B_{ik}|B_{kj}|}{2} & \text{otherwise.}
\end{cases}
$$
Define an element $x'_k\in \mathbb{F}$ by the \emph{exchange relation}
$$x'_kx_k=\prod_{B_{ik}>0}x_i^{B_{ik}}+\prod_{B_{ik}<0}x_i^{-B_{ik}},$$
and set $\mathbf{x}'=\{x_1,x_2,\ldots ,x_{k-1},x'_k,x_{k+1},\ldots ,x_n\}$.
We write $\mu_k(B)=B'$.

Now we fix a pair $(\mathbf{x},B)$, the \emph{initial seed}.
We shall call any pair that can be obtained from
$(\mathbf{x},B)$ by a sequence of mutations a \emph{seed}.
Two seeds are regarded as the same
if there is a permutation of $[1,n]$ which takes
the matrix in the first seed to the second (when regarded as a simultaneous
permutation of the rows and columns) and the free generating set in the first
to the free generating set in the second.
Note that mutating twice at $k$ sends a seed to itself.

Each tuple $\mathbf{x}=\{x_1,\ldots,x_n\}$ such that there
exists a seed $(\mathbf{x},B)$ is called a \emph{cluster} and
its entries $x_i$ are called \emph{cluster variables}.
The cluster algebra $\mathcal{A}(\mathbf{x},B)$ is the
$\mathbb{Q}$-subalgebra of $\mathbb{F}$
generated by the set of all cluster variables.
Up to strong isomorphism (in the sense of~\cite[1.2]{fominzelevinsky2}),
$\mathcal{A}(\mathbf{x},B)$ does not depend on the choice of free
generating set $\mathbf{x}$ so we shall write $\mathcal{A}(B)$ for
$\mathcal{A}(\mathbf{x},B)$.

A cluster algebra is said to be of \emph{finite type} if it has finitely
many seeds. Given a skew-symmetrisable matrix $B$, its \emph{Cartan counterpart}
$A(B)$ is a matrix of the same size defined by

$$A_{ij}=\begin{cases}
2 & \text{if $i=j$;} \\
-|B_{ij}| & \text{otherwise.}
\end{cases}$$

We recall a special case of a result of Fomin-Zelevinsky. Note that the
original result deals with more general cluster algebras (with coefficients)
and allows more general matrices (sign-skew-symmetric matrices) in the
seeds.

\begin{theorem} \cite[1.4]{fominzelevinsky2}
There is a canonical bijection between the strong isomorphism classes of
skew-symmetrisable cluster algebras of finite type and the Cartan matrices
of finite type. A cluster algebra corresponds to a given Cartan matrix
$A$ provided it contains a seed for which the Cartan counterpart of the
matrix is $A$.
\end{theorem}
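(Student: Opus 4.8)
Since this is the Fomin--Zelevinsky finite type classification, a ``proof'' is really a reference to~\cite{fominzelevinsky2}; the strategy one would reconstruct runs as follows. The plan is to prove the two implications of the first sentence separately and then to promote the resulting correspondence to a canonical bijection.

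\emph{Sufficiency.} Assume $A(B)$ is a Cartan matrix of finite type, and let $\Phi$ be the root system of that type. I would build the \emph{cluster complex}: the simplicial complex on the finite set $\Phi_{\geq -1}$ of almost positive roots (positive roots together with the negatives of the simple roots) whose maximal faces are the maximal pairwise ``compatible'' subsets, where compatibility is controlled by a symmetric compatibility degree on $\Phi_{\geq -1}$ that is invariant under the natural involutive symmetries. One shows that every maximal compatible set has exactly $n$ elements and that every compatible set of size $n-1$ lies in exactly two of them, and then matches these root clusters bijectively with the clusters of $\mathcal{A}(B)$ so that cluster mutation becomes the exchange of a single root; finiteness of $\Phi_{\geq -1}$ then forces finitely many clusters, hence finitely many seeds. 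This is the heart of the matter and, I expect, the main obstacle --- essentially the whole theory of the generalized associahedron.

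\emph{Necessity.} For the converse I would argue contrapositively, working with the edge-weighted oriented graph (diagram) associated to $B$ by Fomin and Zelevinsky and its mutation. Running the rank-two exchange recursion on any $2\times 2$ submatrix of any matrix mutation-equivalent to $B$ shows a rank-two cluster algebra is of finite type precisely when the product of its two off-diagonal entries has absolute value at most $3$; so if $\mathcal{A}(B)$ is of finite type then every edge weight of the diagram of $B$, and of every matrix in its mutation class, lies in $\{1,2,3\}$. One then classifies the diagrams enjoying this stability: by exhibiting a finite list of minimal diagrams that are \emph{not} of finite type (the extended Dynkin diagrams together with a short list of sporadic weighted oriented graphs) and a case analysis on diagrams of small rank, one concludes that any diagram avoiding all of them is mutation-equivalent to a Dynkin diagram, so $A(B')$ is a finite-type Cartan matrix for a suitable $B'$ in the mutation class. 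Proving completeness of this forbidden list is the main combinatorial obstacle here.

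\emph{From correspondence to bijection.} Finally one checks the assignment is well defined and injective. Well-definedness: two matrices with the same finite-type Cartan counterpart are mutation-equivalent --- the Dynkin mutation class is connected, and a Dynkin diagram, being a tree, is carried back to any prescribed matrix of its class with the same Cartan counterpart by a sequence of source/sink mutations --- so they define strongly isomorphic cluster algebras. Injectivity: distinct finite-type Cartan matrices lie in distinct mutation classes of matrices (each such class contains a Cartan-matrix representative unique up to simultaneous permutation of rows and columns, as the classification establishes), so they yield non-strongly-isomorphic cluster algebras; equivalently, the exchange graph of $\mathcal{A}(B)$ already determines the Dynkin type. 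Combining sufficiency, necessity and these two points gives the bijection, under which $\mathcal{A}(B)$ corresponds to $A$ exactly when $B$ has a seed with Cartan counterpart $A$.
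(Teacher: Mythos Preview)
The paper gives no proof of this theorem at all; it is simply recalled from~\cite[1.4]{fominzelevinsky2} as background, with the citation serving as the entire justification. You correctly identify this at the outset, and the sketch you supply of the Fomin--Zelevinsky argument (cluster complex on almost positive roots for sufficiency, $2$-finiteness plus a forbidden-subdiagram classification for necessity, and source/sink mutations on trees for well-definedness) is a faithful high-level summary of how~\cite{fominzelevinsky2} actually proceeds, so there is nothing further to compare.
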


A skew-symmetrisable matrix $B$ is said to be \emph{$2$-finite} provided
$|B_{ij}B_{ji}|\leq 3$ for all $i,j\in [1,n]$. Note that this means the
opposite pair of entries $(B_{ij},B_{ji})$ in $B$ must be one of
$(1,1)$, $(1,2)$, $(2,1)$, $(1,3)$ or $(3,1)$. Then Fomin-Zelevinsky
also show that:

\begin{theorem} \cite[1.8]{fominzelevinsky2}
A cluster algebra $\mathcal{A}$ is of finite type if and only if
all the matrices appearing in the seeds of $\mathcal{A}$ are $2$-finite.
\end{theorem}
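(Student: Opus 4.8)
The plan is to reduce the statement to the Fomin--Zelevinsky classification \cite[1.4]{fominzelevinsky2} together with one genuinely hard combinatorial input. First observe that for skew-symmetrisable $B$ the Cartan counterpart $A(B)$ is a symmetrisable generalised Cartan matrix, and it is of finite type (equivalently, positive definite) exactly when the underlying weighted graph of the diagram $\Gamma(B)$ is a disjoint union of Dynkin diagrams. Hence, by \cite[1.4]{fominzelevinsky2}, $\mathcal{A}(B)$ is of finite type if and only if $B$ is mutation-equivalent to a matrix whose diagram is an orientation of a disjoint union of Dynkin diagrams. Observe also that if every matrix in the mutation class of $B$ is $2$-finite, then all their off-diagonal entries lie in $\{-3,\dots,3\}$ (a nonzero $B'_{ij}$ forces $B'_{ji}$ to be nonzero of the opposite sign with product of absolute values at most $3$), so there are only finitely many such matrices of the given size; in particular ``$2$-finite'' already forces the mutation class to be finite. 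It then remains to prove the two implications: (i) finite type $\Rightarrow$ every seed matrix is $2$-finite, and (ii) every seed matrix is $2$-finite $\Rightarrow$ finite type.

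For (i) I would argue contrapositively by a reduction to rank two. Suppose some seed matrix $B'$ of $\mathcal{A}$ has $|B'_{ij}B'_{ji}|\ge 4$ for some $i\ne j$. Freezing all indices except $i$ and $j$ turns $(\mathbf{x},B')$ into the initial seed of a rank-two cluster algebra with coefficients whose principal exchange matrix has $(1,2)$- and $(2,1)$-entries $B'_{ij}$ and $B'_{ji}$; mutating the original seed only at $i$ and $j$ yields seeds of this rank-two pattern, so if $\mathcal{A}$ has finitely many seeds then so does the sub-pattern. But a rank-two cluster algebra whose principal part satisfies $|B_{12}B_{21}|\ge 4$ has infinitely many seeds: this follows from the elementary two-term recursion satisfied by rank-two cluster variables, and adjoining coefficients does not change the shape of the rank-two exchange graph, which depends only on the principal part and is a finite cycle precisely when $|B_{12}B_{21}|\le 3$. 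This contradiction gives $|B'_{ij}B'_{ji}|\le 3$, so $B'$ is $2$-finite. Equivalently, one may simply quote the rank-two case of \cite[1.4]{fominzelevinsky2}: a rank-two cluster algebra is of finite type if and only if $|B_{12}B_{21}|\le 3$, the types being $A_1\!\times\! A_1$, $A_2$, $B_2$, $G_2$.

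For (ii), by \cite[1.4]{fominzelevinsky2} and the first paragraph it suffices to find, inside the (finite) mutation class of $B$, a representative $B'$ whose diagram has underlying graph a disjoint union of Dynkin diagrams. I would choose $B'$ in the class with the fewest edges and first show $\Gamma(B')$ is a forest: if it contained a cycle it would contain a chordless one, and the structural description of chordless cycles in $2$-finite diagrams --- they are cyclically oriented, with the weights around them tightly constrained (the facts established in Section~2 of this paper, following Fomin--Zelevinsky) --- allows one to mutate at a suitable vertex of the cycle so as to strictly decrease the number of edges, contradicting minimality. Then I would argue that a $2$-finite forest must be a disjoint union of Dynkin diagrams: otherwise some component contains, as a full subtree, an extended (affine) Dynkin diagram --- $\widetilde D_\ell$, $\widetilde E_6$, $\widetilde E_7$, $\widetilde E_8$, or one of the multiply-laced affine trees --- and such a diagram is $2$-infinite, one of its iterated mutations carrying an edge of weight $\ge 4$, whereas a full subdiagram of a $2$-finite diagram is again $2$-finite because mutation at a vertex of a subset of the indices commutes with restriction to that subset. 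This contradiction finishes (ii). The main obstacle is precisely this pair of Fomin--Zelevinsky computations: that a $2$-finite diagram can be mutated to a cycle-free one (the control of its oriented cycles) and that every affine Dynkin diagram is $2$-infinite (control of the growth of matrix entries under iterated mutation, uniformly in the infinite families $\widetilde D_\ell$ and $\widetilde A_{p,q}$); everything else here is the classification \cite[1.4]{fominzelevinsky2}, a rank-two verification, and bookkeeping.
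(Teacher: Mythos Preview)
The paper does not prove this theorem; it is simply quoted from \cite[1.8]{fominzelevinsky2} without argument, so there is no proof here to compare your proposal against. Your outline for direction (i), via restriction to a rank-two sub-pattern and the explicit rank-two classification, is correct and is essentially the standard argument.

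For direction (ii) your overall strategy (mutate into a forest, then rule out affine subtrees) is the right shape, but the specific step you propose has a gap. You assert that in a representative with the fewest edges, the presence of a chordless cycle would allow a mutation ``at a suitable vertex of the cycle so as to strictly decrease the number of edges''. This fails already for the oriented simply-laced $4$-cycle: all four vertices are equivalent, and mutating at any one of them produces a diagram with five edges (two oriented triangles sharing an edge), not three. A further mutation is needed to reach the $D_4$ tree. More generally, mutating at a vertex of a cyclically oriented $d$-cycle with $d\ge 4$ inserts the chord between its two cycle-neighbours while keeping the remaining $d-2$ cycle edges, so the edge count is not a monovariant under single mutations. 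The conclusion that the edge-minimal representative is a forest may well be true, but it does not follow from the one-step argument you give.

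There is also a circularity in your appeal to ``Section~2 of this paper'' for the structure of chordless cycles: those results (Proposition~\ref{p:cycles} et seq.) are stated for diagrams \emph{of finite type}, which is exactly what you are trying to deduce from $2$-finiteness. In Fomin--Zelevinsky's own proof the analogous facts are established directly for $2$-finite diagrams, and the passage to a Dynkin representative proceeds by a careful case analysis in their \S9 rather than by a single edge-count minimisation. You have correctly located the architecture of the argument and its genuinely hard core, but the mechanism you propose for eliminating cycles does not work as stated.
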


As observed by Fomin-Zelevinsky~\cite[\S9]{fominzelevinsky2}.

\begin{corollary} \label{c:submatrix}
Let $B$ be a $2$-finite matrix. Then any square submatrix of $B$, obtained
by taking a subset of the rows and the corresponding subset of the
columns, is again $2$-finite. Thus, passing to such a submatrix preserves
the fact that the corresponding cluster algebra is of finite type.
\end{corollary}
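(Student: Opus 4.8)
The plan is to prove the two assertions separately: first that a principal submatrix of a $2$-finite matrix is again $2$-finite, and then, feeding this into the criterion \cite[1.8]{fominzelevinsky2}, that finiteness of type is inherited by the corresponding sub-cluster-algebra.

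For the first assertion, fix a subset $I\subseteq[1,n]$ and write $B|_I$ for the submatrix of $B$ with rows and columns indexed by $I$. First I would check that $B|_I$ is skew-symmetrisable: if $D$ is a diagonal matrix with positive diagonal entries such that $DB$ is skew-symmetric, then the principal submatrix $D|_I$ of $D$ again has positive diagonal entries, and $(D|_I)(B|_I)$ is skew-symmetric, because its $(i,j)$ entry is $d_iB_{ij}=(DB)_{ij}$ for $i,j\in I$, where $d_i$ is the $i$-th diagonal entry of $D$. Moreover, for $i,j\in I$ the $(i,j)$ and $(j,i)$ entries of $B|_I$ are $B_{ij}$ and $B_{ji}$, so $|B_{ij}B_{ji}|\le 3$; thus $B|_I$ is $2$-finite. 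In particular $B|_I$ is skew-symmetrisable, so $\mathcal{A}(B|_I)$ is defined.

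For the second assertion, the key observation I would record is that mutation at a vertex commutes with passing to a submatrix whose index set contains that vertex: the defining formula for $\mu_k(B)$ expresses each of its entries in terms only of $B_{ij}$, $B_{ik}$ and $B_{kj}$, so if $k\in I$ then $(\mu_k(B))|_I=\mu_k(B|_I)$, and iterating this gives $\mu_{k_r}\cdots\mu_{k_1}(B|_I)=\bigl(\mu_{k_r}\cdots\mu_{k_1}(B)\bigr)|_I$ for any $k_1,\dots,k_r\in I$. Now suppose $\mathcal{A}(B)$ is of finite type. By \cite[1.8]{fominzelevinsky2}, every matrix appearing in a seed of $\mathcal{A}(B)$ is $2$-finite; in particular $B$ is, hence so is $B|_I$ by the first assertion. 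An arbitrary matrix appearing in a seed of $\mathcal{A}(B|_I)$ has the form $\mu_{k_r}\cdots\mu_{k_1}(B|_I)=\bigl(\mu_{k_r}\cdots\mu_{k_1}(B)\bigr)|_I$ with all $k_t\in I$; here $\mu_{k_r}\cdots\mu_{k_1}(B)$ lies in a seed of $\mathcal{A}(B)$ and is therefore $2$-finite, so its restriction to $I$ is $2$-finite by the first assertion. Hence every matrix in every seed of $\mathcal{A}(B|_I)$ is $2$-finite, and \cite[1.8]{fominzelevinsky2} yields that $\mathcal{A}(B|_I)$ is of finite type.

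None of these steps is genuinely hard — this is, after all, a corollary — so I do not expect a real obstacle; the only point needing a little care is that \cite[1.8]{fominzelevinsky2} is applied in both directions: once to $\mathcal{A}(B)$, to extract $2$-finiteness of all the matrices in its seeds, and once to $\mathcal{A}(B|_I)$, to convert the accumulated $2$-finiteness back into finiteness of type. One should also note in passing that $2$-finiteness is invariant under simultaneous permutation of the rows and columns, so that it is a well-defined property of a seed.
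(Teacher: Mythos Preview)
Your argument is correct. The first assertion is immediate from the definition of $2$-finiteness, and for the second you correctly isolate the one non-trivial ingredient, namely that mutation at $k\in I$ commutes with restriction to $I$, so that every matrix in a seed of $\mathcal{A}(B|_I)$ is a principal submatrix of some matrix in a seed of $\mathcal{A}(B)$; Theorem~1.8 of \cite{fominzelevinsky2} then closes the loop.

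The paper itself does not supply a proof: the corollary is stated as an observation of Fomin--Zelevinsky and simply attributed to \cite[\S9]{fominzelevinsky2}. So there is nothing to compare against beyond noting that your write-up makes explicit the commutation $(\mu_k(B))|_I=\mu_k(B|_I)$ that is used tacitly in the cited source.
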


Fomin-Zelevinsky~\cite[7.3]{fominzelevinsky2}
define the \emph{diagram} $\Gamma(B)$ of a skew-symmetrisable
matrix $B$ to be the weighted oriented graph with vertices $[1,n]$.
There is an arrow from $i$ to $j$ if and only if $B_{ij}>0$, weighted
with the positive integer $|B_{ij}B_{ji}|$.
If $B$ is $2$-finite, then by~\cite[7.5]{fominzelevinsky2},
all $3$-cycles in the underlying unoriented graph of $\Gamma(B)$ are oriented
cyclically. In this case, the mutation rule for diagrams is as follows:

\begin{prop} \cite[Lemma 8.5]{fominzelevinsky2} \label{p:diagrammutation}
Let $B$ be a $2$-finite
skew-symmetrisable matrix. Then $\Gamma(\mu_k(B))$ is uniquely
determined by $\Gamma(B)$ as follows:
\begin{enumerate}
\item[(a)]
Reverse the orientations of all edges in $\Gamma(B)$ incident with
$k$ (leaving the weights unchanged).
\item[(b)]
For any path in $\Gamma(B)$ of form $i\overset{a}{\to} k\overset{b}{\to} j$
(i.e.\ with $a,b$ positive), let $c$ be the
weight on the edge $j\to i$, taken to be
zero if there is no such arrow. Let $c'$ be determined
by $c'\geq 0$ and
$$c+c'=max(a,b).$$
Then $\Gamma(B)$ changes as in Figure~\ref{f:diagrammutation}, again
taking the case $c'=0$ to mean no arrow between $i$ and $j$.
\end{enumerate}
\end{prop}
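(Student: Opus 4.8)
The plan is to read the rule off the matrix mutation formula entry by entry, reduce everything to the submatrix on $\{i,j,k\}$, and then let the finite-type classification pin down the few local configurations that can actually occur. Part~(a) is immediate: if $i=k$ or $j=k$ then $B'_{ij}=-B_{ij}$, so the sign of the entry---hence the orientation of the edge---reverses, while $|B'_{ij}B'_{ji}|=|B_{ij}B_{ji}|$ is unchanged, so the weight is preserved.

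For part~(b) I would first note that, for $i,j\neq k$, the entries $B'_{ij}$ and $B'_{ji}$ depend only on the rows and columns indexed by $\{i,j,k\}$; by Corollary~\ref{c:submatrix} this $3\times3$ submatrix is again $2$-finite, so by \cite[1.4, 1.8]{fominzelevinsky2} its diagram is that of a rank-$3$ cluster algebra of finite type. It is convenient to pass to the skew-symmetric normalisation $S=D^{1/2}BD^{-1/2}$, where $D=\operatorname{diag}(d_1,\dots,d_n)$ is a symmetriser for $B$. A direct check gives $S_{ij}=\varepsilon_{ij}\sqrt{w_{ij}}$, where $w_{ij}=|B_{ij}B_{ji}|$ is the weight and $\varepsilon_{ij}=\pm1$ records the orientation of the edge; thus $S$ is a function of $\Gamma(B)$ alone. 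A second check shows that $B'$ has the same symmetriser $D$ and that mutation commutes with this normalisation, so that $D^{1/2}B'D^{-1/2}$ is obtained from $S$ by the standard skew-symmetric rule $S'_{ij}=S_{ij}+\tfrac12(|S_{ik}|S_{kj}+S_{ik}|S_{kj}|)$. Since $S$ is determined by $\Gamma(B)$, this already establishes that $\Gamma(\mu_k(B))$ is determined by $\Gamma(B)$.

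It remains to identify the precise change. The correction term above is nonzero exactly when $S_{ik}$ and $S_{kj}$ share a sign, i.e.\ when there is a directed $2$-path through $k$; otherwise $B'_{ij}=B_{ij}$ and only the edges at $k$ are reoriented, as in Figure~\ref{f:diagrammutation}. For a path $i\overset{a}{\to}k\overset{b}{\to}j$ we have $S_{ik}=\sqrt a$ and $S_{kj}=\sqrt b$; since all $3$-cycles are cyclically oriented \cite[7.5]{fominzelevinsky2}, any edge between $i$ and $j$ must point $j\to i$, so $S_{ij}=-\sqrt c$. Hence $S'_{ij}=\sqrt{ab}-\sqrt c$, the new weight is $c'=(\sqrt{ab}-\sqrt c)^2$, and the orientation is the sign of $\sqrt{ab}-\sqrt c$. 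I would then invoke the classification to list the admissible cases: when $c=0$ the induced subgraph is a finite-type path, forcing $(a,b)\in\{(1,1),(1,2),(2,1)\}$, and when $c>0$ the $2$-finite triangle has weight multiset $\{1,1,1\}$ or $\{1,2,2\}$. Running through these (seven) cases is then a one-line computation confirming in each that $c+c'=\max(a,b)$ and that the orientation matches Figure~\ref{f:diagrammutation}; the symmetric path $j\to k\to i$ is handled by interchanging $i$ and $j$.

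The main obstacle is the classification step: one must rule out weight-$3$ edges and a $\{2,2,2\}$ triangle among cyclically oriented $2$-finite configurations, leaving only the multisets $\{1,1,1\}$ and $\{1,2,2\}$. This is precisely where Corollary~\ref{c:submatrix} together with the rank-$3$ finite-type classification does the real work. Once this short list is available, the identity $c+c'=\max(a,b)$ is a finite verification rather than a general algebraic fact---indeed $(\sqrt{ab}-\sqrt c)^2=\max(a,b)-c$ fails in general and holds only under these constraints.
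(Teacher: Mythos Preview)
The paper does not give its own proof: the result is quoted from \cite[Lemma~8.5]{fominzelevinsky2}, where the rule is stated with the formula $\sqrt{c}+\sqrt{c'}=\sqrt{ab}$. The paper's sole addition is the remark, immediately after the statement, that this is equivalent to $c+c'=\max(a,b)$ once one restricts to the configurations permitted by \cite[Prop.~9.7]{fominzelevinsky2} (Proposition~\ref{p:cycles} here). Your argument therefore supplies more than the paper does: the passage to the skew-symmetric normalisation $S=D^{1/2}BD^{-1/2}$ is a clean derivation of the square-root formula, and the closing case check is precisely the verification the paper leaves to the reader.

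There is, however, a genuine slip in the step where you pass from ``$2$-finite'' to ``rank-$3$ finite type''. Corollary~\ref{c:submatrix} and \cite[1.8]{fominzelevinsky2} do \emph{not} say that a single $2$-finite matrix is of finite type; the latter says a cluster algebra is of finite type iff \emph{every} exchange matrix arising in it is $2$-finite, and the former says that submatrices of a \emph{finite-type} matrix remain of finite type. A $2$-finite skew-symmetrisable matrix can contain the path $i\overset{3}{\to}k\overset{2}{\to}j$ (take for instance $B_{ik}=1$, $B_{ki}=-3$, $B_{kj}=2$, $B_{jk}=-1$ with symmetriser $(3,1,2)$); then $c'=(\sqrt{6}-0)^2=6$ while $\max(a,b)=3$, so $c+c'=\max(a,b)$ fails even though $\sqrt{c}+\sqrt{c'}=\sqrt{ab}$ still holds. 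Thus the simplified formula is valid only under the stronger hypothesis that $B$ is of finite type. The proposition as stated in the paper is, strictly speaking, too general; the paper only ever applies it to finite-type diagrams, and its appeal to Proposition~\ref{p:cycles} tacitly assumes this. Your proof is correct once the hypothesis is read as ``of finite type'' rather than merely ``$2$-finite''; just be explicit that this is what you are using when you invoke the rank-$3$ classification.
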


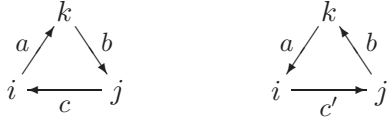
\begin{figure}[!ht]
\begin{picture}(140,40)
  \put(0,5){
    \multiput(0,0)(100,0){2}{
      \put(0,0){\HVCenter{$i$}}
      \put(40,0){\HVCenter{$j$}}
      \put(20,30){\HVCenter{$k$}}
      \put(4,15){\HBCenter{\small $a$}}
      \put(36,15){\HBCenter{\small $b$}}
    }
    \put(0,0){\
      \put(34,0){\vector(-1,0){28}}
      \put(4,6){\vector(2,3){12}}
      \put(24,24){\vector(2,-3){12}}
      \put(20,-6){\HVCenter{\small $c$}}
    }
    \put(100,0){\
      \put(6,0){\vector(1,0){28}}
      \put(16,24){\vector(-2,-3){12}}
      \put(36,6){\vector(-2,3){12}}
      \put(20,-6){\HVCenter{\small $c'$}}
    }
  }
\end{picture}
\caption{Mutation of a diagram.}
\label{f:diagrammutation}
\end{figure}

Note that the formula
$$\sqrt{c}+\sqrt{c'}=\sqrt{ab}$$
appears in~\cite[Lemma 8.5]{fominzelevinsky2} but it is easy to check
using~\cite[Prop.\ 9.7]{fominzelevinsky2} (see Proposition~\ref{p:cycles} below)
that the above formula gives the same answer. We thank the referee for pointing
this out to us.

We call $\Gamma(\mu_k(B))$ the \emph{mutation} of $\Gamma$ at $B$
and use the notation $\mu_k(\Gamma)$.

We say that a skew-symmetrisable matrix $B$ (and the corresponding diagram) is
\emph{of finite type} if the corresponding cluster algebra is of finite type.
Hence, by Corollary~\ref{c:submatrix}, we have:

\begin{corollary} \cite[\S9]{fominzelevinsky2}
Let $\Gamma$ be a diagram of finite type. Then any induced subdiagram
of $\Gamma$ is again of finite type.
\end{corollary}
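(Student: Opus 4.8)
The plan is to deduce the statement directly from Corollary~\ref{c:submatrix} by recognising that an induced subdiagram of $\Gamma$ is nothing other than the diagram of a principal submatrix of a skew-symmetrisable matrix realising $\Gamma$.

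First I would choose a skew-symmetrisable matrix $B$ with $\Gamma(B)=\Gamma$; since $\Gamma$ is of finite type, $B$ may be taken so that $\mathcal{A}(B)$ is of finite type (equivalently, by~\cite[1.8]{fominzelevinsky2}, so that $B$ is $2$-finite). Given an induced subdiagram $\Gamma'$ of $\Gamma$ on a vertex subset $I\subseteq[1,n]$, let $B_I$ be the square submatrix of $B$ with rows and columns indexed by $I$; restricting a symmetrising matrix to $I$ shows that $B_I$ is again skew-symmetrisable. The one point requiring a moment's care is the identification $\Gamma(B_I)=\Gamma'$: for $i,j\in I$ one has $(B_I)_{ij}=B_{ij}$, so neither the sign of $B_{ij}$ nor the weight $|B_{ij}B_{ji}|$ changes under passage to the submatrix, and hence $\Gamma(B_I)$ carries exactly the arrows and weights of $\Gamma(B)$ between vertices of $I$, which is by definition the induced subdiagram on $I$.

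The corollary then follows immediately: by Corollary~\ref{c:submatrix}, passing from $B$ to the principal submatrix $B_I$ preserves the property that the associated cluster algebra is of finite type, so $\mathcal{A}(B_I)$ is of finite type, i.e.\ $\Gamma'=\Gamma(B_I)$ is a diagram of finite type. Every step is just an unwinding of definitions, so I do not expect any genuine obstacle; the only substantive check is that the diagram of a principal submatrix equals the corresponding induced subdiagram, which is elementary since $2$-finiteness and the arrow/weight data of a diagram are determined pairwise by the opposite entries $(B_{ij},B_{ji})$.
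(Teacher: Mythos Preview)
Your proposal is correct and is exactly the approach the paper takes: the corollary is stated immediately after Corollary~\ref{c:submatrix} with the words ``Hence, by Corollary~\ref{c:submatrix}, we have,'' and no further argument is given. Your additional verification that $\Gamma(B_I)$ coincides with the induced subdiagram on $I$ is the only content needed to make this implication explicit, and your check is correct.
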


\section{Diagrams of finite type}
In this section, we collect together some properties of diagrams of
finite type. We call an induced subgraph of an unoriented graph which is a
cycle a \emph{chordless cycle}. We first recall:

\begin{prop} \label{p:cycles} \cite[Prop.\ 9.7]{fominzelevinsky2}
Let $\Gamma$ be a diagram of finite type. Then a chordless cycle in the
underlying unoriented graph of $\Gamma$ is always cyclically oriented
in $\Gamma$. In addition, the unoriented graph underlying the cycle must be
one of those in Figure~\ref{f:cycles}.
\end{prop}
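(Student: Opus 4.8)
The plan is to combine two known structural facts about $2$-finite diagrams: the "no chords" property already established in Proposition~\ref{p:cycles}'s hypothesis (that a chordless cycle must be cyclically oriented), and a local analysis of which edge weights can occur along such a cycle. First I would recall from Theorem~1.4/1.8 and Corollary~\ref{c:submatrix} that the induced subdiagram on the vertices of any chordless cycle $C$ is itself a diagram of finite type, hence $2$-finite; so every edge of $C$ carries a weight in $\{1,2,3\}$, and I may work entirely inside this subdiagram. The orientation claim — that $C$ is cyclically oriented — follows for $3$-cycles directly from~\cite[7.5]{fominzelevinsky2}, and for longer chordless cycles I would argue as in~\cite[\S9]{fominzelevinsky2}: if two consecutive edges formed a path $i\to k\to j$ that were not part of a cyclic orientation, the mutation rule of Proposition~\ref{p:diagrammutation}(b) at $k$ would create an edge between $i$ and $j$ (since $c=0$ forces $c'=\max(a,b)>0$), contradicting that $C$ is chordless in the original diagram (mutation at a vertex of a chordless cycle of length $\geq 4$ cannot be "cancelled" by pre-existing chords). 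So the orientation part reduces to quoting/adapting Fomin--Zelevinsky.

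The substantive part is the classification of the possible weight patterns, i.e.\ Figure~\ref{f:cycles}. Here the key observation is a product constraint: for a cyclically oriented chordless cycle $i_0\to i_1\to\cdots\to i_{d-1}\to i_0$ in a skew-symmetrisable $B$, the skew-symmetrisability gives positive integers $d_0,\dots,d_{d-1}$ (the diagonal of $D$) with $d_\ell B_{i_\ell i_{\ell+1}} = -d_{\ell+1} B_{i_{\ell+1} i_\ell}$, and multiplying these around the cycle forces $\prod_\ell B_{i_\ell i_{\ell+1}} = \pm\prod_\ell B_{i_{\ell+1} i_\ell}$, hence $\prod_\ell w_\ell$ (where $w_\ell = |B_{i_\ell i_{\ell+1}} B_{i_{\ell+1} i_\ell}|$) is a perfect square — more precisely the ratio of the two "directed products" of off-diagonal entries equals a square. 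Combined with the fact that $(B_{ij},B_{ji})$ lies in $\{(1,1),(1,2),(2,1),(1,3),(3,1)\}$, this severely restricts which multisets of weights $\{w_\ell\}$ are admissible: a single weight $2$ or a single weight $3$ is impossible unless balanced, etc. I would then promote this from a necessary condition to the actual list in Figure~\ref{f:cycles} by a finite-type argument: take the induced cycle subdiagram, pass to its Cartan counterpart $A$, and use that the symmetrization of $A$ must be positive definite (equivalently, the diagram is $2$-finite and of finite type) to rule out all weight patterns except those in the figure, checking the remaining small cases by hand (computing a determinant or exhibiting an isotropic/negative vector).

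The main obstacle I anticipate is the last step: turning the square-product necessary condition into the exact finite list, because the number of a priori candidate weight sequences (once you allow mixed weights around a long cycle) is not bounded, so I cannot simply enumerate. The resolution is that positive definiteness of the symmetrized Cartan counterpart of a cycle is itself very restrictive — a weighted cycle is "barely" positive definite at best — so any edge of weight $\geq 2$, or any cycle of length $\geq 4$ that is not unweighted, quickly produces a vector on which the form is non-positive; I would make this precise by testing the form on the all-ones vector (or a suitable $\pm1$ vector adapted to the cyclic orientation), which for a cycle evaluates to a simple expression in the weights, and show it is positive only for the patterns in Figure~\ref{f:cycles}. This is exactly the kind of computation~\cite[Prop.\ 9.7]{fominzelevinsky2} carries out, so in the write-up I would either cite it directly or reproduce the short determinant/eigenvector check for completeness.
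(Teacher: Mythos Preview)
The paper does not prove this proposition; it is stated purely as a citation of \cite[Prop.~9.7]{fominzelevinsky2}, with no argument supplied. So there is no ``paper's own proof'' to compare against beyond the reference itself.

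Your sketch, however, contains a genuine error in the weight-classification step. You write that one should ``pass to [the cycle's] Cartan counterpart $A$, and use that the symmetrization of $A$ must be positive definite (equivalently, the diagram is $2$-finite and of finite type)''. This equivalence is false. The Cartan counterpart of a finite-type diagram need not be positive definite: the all-weight-$1$ cycle on $d$ vertices---which is the first family in Figure~\ref{f:cycles} and certainly of finite type---has Cartan counterpart equal to the affine $\widetilde{A}_{d-1}$ matrix, which is only positive \emph{semi}definite (the all-ones vector lies in its kernel). What is true, via \cite{bgz}, is that $B$ admits \emph{some} positive quasi-Cartan companion, i.e.\ some sign choice on the off-diagonal entries making the result positive definite; but that is not the Cartan counterpart, and once you allow sign changes your proposed test on the all-ones vector carries no information. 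So the mechanism you describe for ``ruling out all weight patterns except those in the figure'' does not work as stated.

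Your orientation argument is also muddled as written: mutation at $k$ producing a new edge between $i$ and $j$ does not contradict chordlessness of $C$ in the \emph{original} diagram (mutation changes the diagram), and in a cyclically oriented cycle every vertex $k$ already sits on a directed path $i\to k\to j$, so the argument does not separate the cyclic from the non-cyclic case. The square-product constraint from skew-symmetrisability is correct and useful, but by itself it does not bound the cycle length or exclude, say, a long cycle with two weight-$2$ edges; the missing ingredient is a finite-type obstruction that genuinely applies to cycles, which is what Fomin--Zelevinsky supply.
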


We will refer to such cycles as chordless cycles in $\Gamma$.

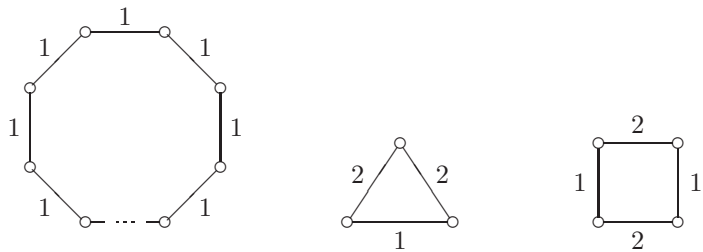
\begin{figure}[!ht]
\begin{picture}(270,88)
\put(0,8){
  \put(36,36){
    \multiput(-36,-15)(72,0){2}{\circle{4}}
    \multiput(-36,15)(72,0){2}{\circle{4}}
    \multiput(-15,-36)(0,72){2}{\circle{4}}
    \multiput(15,-36)(0,72){2}{\circle{4}}
    \multiput(-36,-13)(72,0){2}{\line(0,1){26}}
    \put(-13,36){\line(1,0){26}}
    \multiput(-15,36)(51,-51){2}{\put(-1.42,-1.42){\line(-1,-1){18.6}}}
    \multiput(-15,-36)(51,51){2}{\put(-1.42,1.42){\line(-1,1){18.6}}}
    \put(-13,-36){\line(1,0){5}}
    \put(13,-36){\line(-1,0){5}}
    \multiput(-3.5,-36)(3,0){3}{\line(1,0){1}}
    \put(-42,0){\HVCenter{\small $1$}}
    \put(42,0){\HVCenter{\small $1$}}
    \put(0,42){\HVCenter{\small $1$}}
    \put(30.4,30.4){\HVCenter{\small $1$}}
    \put(30.4,-30.4){\HVCenter{\small $1$}}
    \put(-30.4,30.4){\HVCenter{\small $1$}}
    \put(-30.4,-30.4){\HVCenter{\small $1$}}
  }
  \put(140,0){
    \multiput(-20,0)(40,0){2}{\circle{4}}
    \put(0,30){\circle{4}}
    \put(-18,0){\line(1,0){36}}
    \put(-18.8,1.8){\line(2,3){17.6}}
    \put(18.8,1.8){\line(-2,3){17.6}}
    \put(0,-7){\HVCenter{\small $1$}}
    \put(-16,15){\HBCenter{\small $2$}}
    \put(16,15){\HBCenter{\small $2$}}
  }
  \put(230,15){
    \multiput(-15,-15)(30,0){2}{\circle{4}}
    \multiput(-15,15)(30,0){2}{\circle{4}}
    \multiput(-15,-13)(30,0){2}{\line(0,1){26}}
    \multiput(-13,-15)(0,30){2}{\line(1,0){26}}
    \put(0,-22){\HVCenter{\small $2$}}
    \put(0,22){\HVCenter{\small $2$}}
    \put(-22,0){\HVCenter{\small $1$}}
    \put(22,0){\HVCenter{\small $1$}}
  }
}
\end{picture}
\caption{The chordless cycles in a diagram of finite type.}
\label{f:cycles}
\end{figure}

\begin{figure}[!ht]
\begin{picture}(310,43)
\put(0,5){
  \multiput(20,0)(90,0){4}{
    \multiput(-20,0)(40,0){2}{\circle{4}}
    \put(0,30){\circle{4}}
    \put(-18.8,1.8){\line(2,3){17.6}}
    \put(18.8,1.8){\line(-2,3){17.6}}
  }
  \put(20,0){
    \put(-16,15){\HBCenter{\small $1$}}
    \put(16,15){\HBCenter{\small $1$}}
  }
  \put(110,0){
    \put(0,-7){\HVCenter{\small $1$}}
    \put(-16,15){\HBCenter{\small $1$}}
    \put(16,15){\HBCenter{\small $1$}}
    \put(-18,0){\line(1,0){36}}
  }
  \put(200,0){
    \put(-16,15){\HBCenter{\small $1$}}
    \put(16,15){\HBCenter{\small $2$}}
  }
  \put(290,0){
    \put(0,-7){\HVCenter{\small $1$}}
    \put(-16,15){\HBCenter{\small $2$}}
    \put(16,15){\HBCenter{\small $2$}}
    \put(-18,0){\line(1,0){36}}
  }
}
\end{picture}
\caption{$3$-vertex connected subdiagrams of diagrams of finite type.}
\label{f:3vertex}
\end{figure}
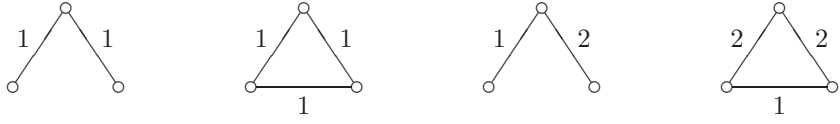

\begin{lemma} \label{l:3vertex}
Let $\Gamma$ be a diagram of finite type and let $\Xi$ be an
induced subdiagram with three vertices which is connected.
Then the unoriented graph underlying $\Xi$
must be one of those in Figure~\ref{f:3vertex}.
\end{lemma}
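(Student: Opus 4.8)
The idea is to reduce everything to the classification of chordless cycles in Proposition~\ref{p:cycles}. A connected graph on three vertices is either a path $i-k-j$ (with $k$ the middle vertex and no edge between $i$ and $j$) or the full triangle on $\{i,j,k\}$, so I would treat these two cases separately. Throughout I use that $\Xi$, being an induced subdiagram of the finite type diagram $\Gamma$, is itself of finite type and hence $2$-finite, and that both properties are preserved under mutation, so that Propositions~\ref{p:diagrammutation} and~\ref{p:cycles} remain applicable after mutating $\Xi$.

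If $\Xi$ is a triangle the argument is immediate: the underlying unoriented graph of $\Xi$ is a chordless cycle in $\Xi$ itself (it uses all three of its vertices), so Proposition~\ref{p:cycles} applies directly and tells us that $\Xi$ is cyclically oriented with weighted graph one of those in Figure~\ref{f:cycles}. The only three-vertex possibilities there are the triangle with all weights $1$ and the triangle with weights $1,2,2$, and both of these appear in Figure~\ref{f:3vertex}.

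If $\Xi$ is the path $i-k-j$ with edge weights $a$ and $b$ (positive integers), I would first, if necessary, mutate at a leaf — this only reverses the incident edge and changes nothing else, in particular creating no edge between $i$ and $j$ — so as to assume the path is directed, $i\xrightarrow{a}k\xrightarrow{b}j$. Mutating at $k$ and applying Proposition~\ref{p:diagrammutation}, the two edges at $k$ get reversed and, since the weight of the (absent) edge from $j$ to $i$ is $0$, a new edge from $i$ to $j$ of weight $\max(a,b)$ is created. The result is a cyclically oriented triangle on $\{i,j,k\}$ with weight multiset $\{a,b,\max(a,b)\}$, sitting inside a diagram of finite type; so by Proposition~\ref{p:cycles} this multiset must be $\{1,1,1\}$ or $\{1,2,2\}$. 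These force $\{a,b\}=\{1,1\}$ and $\{a,b\}=\{1,2\}$ respectively, which are exactly the two path diagrams in Figure~\ref{f:3vertex}.

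The only step requiring genuine care — and the main potential obstacle — is this last one: verifying that a leaf mutation can always be arranged to orient the path (one checks the four possible orientations of $i-k-j$ by hand) and that Proposition~\ref{p:diagrammutation} applied at the middle vertex really does produce a chordless triangle on the same vertex set, to which Proposition~\ref{p:cycles} then applies. Granting this set-up, the classification of chordless cycles does the rest.
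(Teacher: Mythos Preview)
Your argument is correct. The triangle case is handled identically to the paper, via Proposition~\ref{p:cycles}. For the path case, however, the paper takes a different route: it simply invokes~\cite[Prop.~9.3]{fominzelevinsky2} (and its proof) to assert directly that a path $1\to 2\to 3$ in a diagram of finite type cannot carry a weight greater than $2$ or two weights equal to $2$. Your approach instead reduces the path case to the triangle case by mutation: orient the path via a leaf mutation if necessary, mutate at the middle vertex to create a $3$-cycle, and then apply Proposition~\ref{p:cycles} to the resulting triangle to constrain $\{a,b\}$. This is a genuinely different decomposition. The advantage of your route is that it is entirely self-contained within the results already stated in the paper (Propositions~\ref{p:diagrammutation} and~\ref{p:cycles}), avoiding the external citation; the paper's route is shorter but relies on looking up the specific content of~\cite[Prop.~9.3]{fominzelevinsky2}. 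One small point worth making explicit in your write-up: when you apply the mutation formula $c'=\max(a,b)$ from Proposition~\ref{p:diagrammutation} with $c=0$, recall that this simplified formula (as the paper notes) agrees with the original $\sqrt{c}+\sqrt{c'}=\sqrt{ab}$ only because $\Xi$ is of finite type; but this is harmless here, since under either formula the resulting triangle has weights $\{a,b,ab\}$ or $\{a,b,\max(a,b)\}$, and in both cases Proposition~\ref{p:cycles} forces $\{a,b\}\in\{\{1,1\},\{1,2\}\}$.
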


\begin{proof}
This follows from Proposition~\ref{p:cycles}, together with the fact
that a diagram of form $1\to 2\to 3$ (with positive weights)
cannot have a weight greater than $2$ or two weights equal to $2$,
by~\cite[Prop.\ 9.3]{fominzelevinsky2} and its proof.
\end{proof}

\begin{corollary} \label{c:localmutation}
Let $\Gamma$ be a graph of finite type and suppose that
are $3$ distinct vertices $i,j,k$ of $\Gamma$ with both $i$ and $j$
connected to $k$.
Then the effect of the mutation of $\Gamma$ at $k$ on the induced
subdiagram must be as in Figure~\ref{f:localmutation}
(starting on one side or the other), up to switching $i$ and $j$.
\end{corollary}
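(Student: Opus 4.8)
The plan is to reduce Corollary~\ref{c:localmutation} to a finite check using Lemma~\ref{l:3vertex}. The hypothesis says $i,j,k$ are three vertices with both $i$ and $j$ adjacent to $k$, so the induced subdiagram $\Xi$ on $\{i,j,k\}$ is connected. By Lemma~\ref{l:3vertex}, $\Xi$ (before mutating at $k$) must be one of the finitely many graphs in Figure~\ref{f:3vertex}; moreover, since both edges at $k$ are present, the only relevant shapes are the ones where $k$ is the apex of a path or the apex of a triangle with the prescribed weight patterns. First I would enumerate these possibilities: up to switching $i$ and $j$, the induced subdiagram is either a path $i - k - j$ with weight pair $(1,1)$, $(1,2)$, or $(2,2)$ on the two edges at $k$ (no edge $i - j$), or a $3$-cycle with weight pattern $(1,1,1)$ or one of the mixed patterns $(2,2,1)$ allowed by Figure~\ref{f:cycles} — but the triangle patterns are constrained further because by Proposition~\ref{p:cycles} a chordless $3$-cycle of finite type must be cyclically oriented and carry weights from the list in Figure~\ref{f:cycles}.

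Next, for each such $\Xi$, I would apply Proposition~\ref{p:diagrammutation} to compute $\mu_k(\Xi)$ explicitly. Step (a) reverses the two edges at $k$; step (b) applies only when, after accounting for orientations, there is a path $i \overset{a}{\to} k \overset{b}{\to} j$, in which case the edge between $i$ and $j$ is updated via $c + c' = \max(a,b)$ (with the orientation as in Figure~\ref{f:diagrammutation}). Running through the cases: if $\Xi$ is a path with no $i - j$ edge, mutation at $k$ can create an edge $i - j$ of weight $c' = \max(a,b)$ (when the orientations are right) or leave $i,j$ unconnected (when they are not); if $\Xi$ is a triangle, mutation at $k$ reverses the two edges at $k$ and adjusts the weight on $i - j$ by $c' = \max(a,b) - c$. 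In every case the output is again a connected three-vertex diagram of finite type, hence again appears in Figure~\ref{f:3vertex}, and one records the correspondence as the two-sided transition of Figure~\ref{f:localmutation}. The statement ``starting on one side or the other'' reflects the fact that $\mu_k$ is an involution, so the table of transitions is symmetric.

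The main obstacle is bookkeeping rather than conceptual difficulty: one must be careful to track \emph{orientations} correctly through step (a) and step (b) of Proposition~\ref{p:diagrammutation}, since whether step (b) applies at all depends on whether $k$ is a source, a sink, or a ``pass-through'' vertex for the two incident edges, and the resulting orientation of the $i - j$ edge is dictated by the figure. A secondary subtlety is checking that the weights produced by $c + c' = \max(a,b)$ stay within the $2$-finite range and match one of the admissible patterns — but this is guaranteed a priori, since $\mu_k$ preserves finite type (Corollary~\ref{c:submatrix}) and we have already classified all finite-type three-vertex connected diagrams. Thus the proof is: invoke Lemma~\ref{l:3vertex} to bound the cases, apply Proposition~\ref{p:diagrammutation} to each, and read off that the results are exactly the transitions displayed in Figure~\ref{f:localmutation}.
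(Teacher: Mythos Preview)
Your proposal is correct and follows exactly the paper's approach: the paper's proof is the one-line ``This follows from Lemma~\ref{l:3vertex} and Proposition~\ref{p:diagrammutation},'' and your write-up is simply a fuller unpacking of that sentence. One minor quibble: the fact that $\mu_k$ preserves finite type is not Corollary~\ref{c:submatrix} (which is about passing to submatrices) but is immediate from the definition of finite type via seeds; otherwise your bookkeeping is sound.
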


\begin{proof}
This follows from Lemma~\ref{l:3vertex} and Proposition~\ref{p:diagrammutation}.
\end{proof}

\begin{figure}[!ht]
\begin{picture}(350,170)
\put(10,5){
  \put(20,0){
    \put(-42,30){\HVCenter{(e)}}
    \put(0,0){
      \put(-20,0){\circle{4}}
      \put(20,0){\circle{4}}
      \put(0,30){\circle*{4}}
      \put(-28,0){\HVCenter{$i$}}
      \put(28,0){\HVCenter{$j$}}
      \put(0,34){\HBCenter{$k$}}
      \put(-18.2,2.7){\vector(2,3){16.4}}
      \put(1.8,27.3){\vector(2,-3){16.4}}
      \put(-16,15){\HBCenter{\scriptsize $1$}}
      \put(16,15){\HBCenter{\scriptsize $2$}}
    }
    \put(45,16){
      \put(0,0){\HVCenter{$\longleftrightarrow$}}
      \put(0,6){\HBCenter{$\mu_k$}}
    }
    \put(90,0){
      \put(-20,0){\circle{4}}
      \put(20,0){\circle{4}}
      \put(0,30){\circle*{4}}
      \put(-28,0){\HVCenter{$i$}}
      \put(28,0){\HVCenter{$j$}}
      \put(0,34){\HBCenter{$k$}}
      \put(-1.8,27.3){\vector(-2,-3){16.4}}
      \put(18.2,2.7){\vector(-2,3){16.4}}
      \put(-17,0){\vector(1,0){34}}
      \put(-16,15){\HBCenter{\scriptsize $1$}}
      \put(16,15){\HBCenter{\scriptsize $2$}}
      \put(0,-7){\HVCenter{\scriptsize $2$}}
    }
  }
  \put(240,0){
    \put(-42,30){\HVCenter{(f)}}
    \put(0,0){
      \put(-20,0){\circle{4}}
      \put(20,0){\circle{4}}
      \put(0,30){\circle*{4}}
      \put(-28,0){\HVCenter{$i$}}
      \put(28,0){\HVCenter{$j$}}
      \put(0,34){\HBCenter{$k$}}
      \put(-18.2,2.4){\vector(2,3){16.4}}
      \put(1.8,27.2){\vector(2,-3){16.4}}
      \put(17,0){\vector(-1,0){34}}
      \put(-16,15){\HBCenter{\scriptsize $2$}}
      \put(16,15){\HBCenter{\scriptsize $2$}}
      \put(0,-7){\HVCenter{\scriptsize $1$}}
    }
    \put(45,16){
      \put(0,0){\HVCenter{$\longleftrightarrow$}}
      \put(0,6){\HBCenter{$\mu_k$}}
    }
    \put(90,0){
      \put(-20,0){\circle{4}}
      \put(20,0){\circle{4}}
      \put(0,30){\circle*{4}}
      \put(-28,0){\HVCenter{$i$}}
      \put(28,0){\HVCenter{$j$}}
      \put(0,34){\HBCenter{$k$}}
      \put(-1.8,27.2){\vector(-2,-3){16.4}}
      \put(18.2,2.7){\vector(-2,3){16.4}}
      \put(-17,0){\vector(1,0){34}}
      \put(-16,15){\HBCenter{\scriptsize $2$}}
      \put(16,15){\HBCenter{\scriptsize $2$}}
      \put(0,-7){\HVCenter{\scriptsize $1$}}
    }
  }
}
\put(10,65){
  \put(20,0){
    \put(-42,30){\HVCenter{(c)}}
    \put(0,0){
      \put(-20,0){\circle{4}}
      \put(20,0){\circle{4}}
      \put(0,30){\circle*{4}}
      \put(-28,0){\HVCenter{$i$}}
      \put(28,0){\HVCenter{$j$}}
      \put(0,34){\HBCenter{$k$}}
      \put(-18.2,2.7){\vector(2,3){16.4}}
      \put(18.2,2.7){\vector(-2,3){16.4}}
      \put(-16,15){\HBCenter{\scriptsize $1$}}
      \put(16,15){\HBCenter{\scriptsize $2$}}
    }
    \put(45,16){
      \put(0,0){\HVCenter{$\longleftrightarrow$}}
      \put(0,6){\HBCenter{$\mu_k$}}
    }
    \put(90,0){
      \put(-20,0){\circle{4}}
      \put(20,0){\circle{4}}
      \put(0,30){\circle*{4}}
      \put(-28,0){\HVCenter{$i$}}
      \put(28,0){\HVCenter{$j$}}
      \put(0,34){\HBCenter{$k$}}
      \put(-1.8,27.3){\vector(-2,-3){16.4}}
      \put(1.8,27.3){\vector(2,-3){16.4}}
      \put(-16,15){\HBCenter{\scriptsize $1$}}
      \put(16,15){\HBCenter{\scriptsize $2$}}
    }
  }
  \put(240,0){
    \put(-42,30){\HVCenter{(d)}}
    \put(0,0){
      \put(-20,0){\circle{4}}
      \put(20,0){\circle{4}}
      \put(0,30){\circle*{4}}
      \put(-28,0){\HVCenter{$i$}}
      \put(28,0){\HVCenter{$j$}}
      \put(0,34){\HBCenter{$k$}}
      \put(-18.2,2.7){\vector(2,3){16.4}}
      \put(1.8,27.3){\vector(2,-3){16.4}}
      \put(-16,15){\HBCenter{\scriptsize $2$}}
      \put(16,15){\HBCenter{\scriptsize $1$}}
    }
    \put(45,16){
      \put(0,0){\HVCenter{$\longleftrightarrow$}}
      \put(0,6){\HBCenter{$\mu_k$}}
    }
    \put(90,0){
      \put(-20,0){\circle{4}}
      \put(20,0){\circle{4}}
      \put(0,30){\circle*{4}}
      \put(-28,0){\HVCenter{$i$}}
      \put(28,0){\HVCenter{$j$}}
      \put(0,34){\HBCenter{$k$}}
      \put(18.2,2.7){\vector(-2,3){16.4}}
      \put(-1.8,27.3){\vector(-2,-3){16.4}}
      \put(-15,0){\vector(1,0){30}}
      \put(-16,15){\HBCenter{\scriptsize $2$}}
      \put(16,15){\HBCenter{\scriptsize $1$}}
      \put(0,-7){\HVCenter{\scriptsize $2$}}
    }
  }
}
\put(10,130){
  \put(20,0){
    \put(-42,30){\HVCenter{(a)}}
    \put(0,0){
      \put(-20,0){\circle{4}}
      \put(20,0){\circle{4}}
      \put(0,30){\circle*{4}}
      \put(-28,0){\HVCenter{$i$}}
      \put(28,0){\HVCenter{$j$}}
      \put(0,34){\HBCenter{$k$}}
      \put(-18.2,2.7){\vector(2,3){16.4}}
      \put(18.2,2.7){\vector(-2,3){16.4}}
      \put(-16,15){\HBCenter{\scriptsize $1$}}
      \put(16,15){\HBCenter{\scriptsize $1$}}
    }
    \put(45,16){
      \put(0,0){\HVCenter{$\longleftrightarrow$}}
      \put(0,6){\HBCenter{$\mu_k$}}
    }
    \put(90,0){
      \put(-20,0){\circle{4}}
      \put(20,0){\circle{4}}
      \put(0,30){\circle*{4}}
      \put(-28,0){\HVCenter{$i$}}
      \put(28,0){\HVCenter{$j$}}
      \put(0,34){\HBCenter{$k$}}
      \put(-1.8,27.3){\vector(-2,-3){16.4}}
      \put(1.8,27.3){\vector(2,-3){16.4}}
      \put(-16,15){\HBCenter{\scriptsize $1$}}
      \put(16,15){\HBCenter{\scriptsize $1$}}
    }
  }
  \put(240,0){
    \put(-42,30){\HVCenter{(b)}}
    \put(0,0){
      \put(-20,0){\circle{4}}
      \put(20,0){\circle{4}}
      \put(0,30){\circle*{4}}
      \put(-28,0){\HVCenter{$i$}}
      \put(28,0){\HVCenter{$j$}}
      \put(0,34){\HBCenter{$k$}}
      \put(-18.2,2.7){\vector(2,3){16.4}}
      \put(1.8,27.3){\vector(2,-3){16.4}}
      \put(-16,15){\HBCenter{\scriptsize $1$}}
      \put(16,15){\HBCenter{\scriptsize $1$}}
    }
    \put(45,16){
      \put(0,0){\HVCenter{$\longleftrightarrow$}}
      \put(0,6){\HBCenter{$\mu_k$}}
    }
    \put(90,0){
      \put(-20,0){\circle{4}}
      \put(20,0){\circle{4}}
      \put(0,30){\circle*{4}}
      \put(-28,0){\HVCenter{$i$}}
      \put(28,0){\HVCenter{$j$}}
      \put(0,34){\HBCenter{$k$}}
      \put(18.2,2.7){\vector(-2,3){16.4}}
      \put(-1.8,27.3){\vector(-2,-3){16.4}}
      \put(-15,0){\vector(1,0){30}}
      \put(-16,15){\HBCenter{\scriptsize $1$}}
      \put(16,15){\HBCenter{\scriptsize $1$}}
      \put(0,-7){\HVCenter{\scriptsize $1$}}
    }
  }
}
\end{picture}
\caption{Local picture of mutation of a diagram of finite type.}
\label{f:localmutation}
\end{figure}
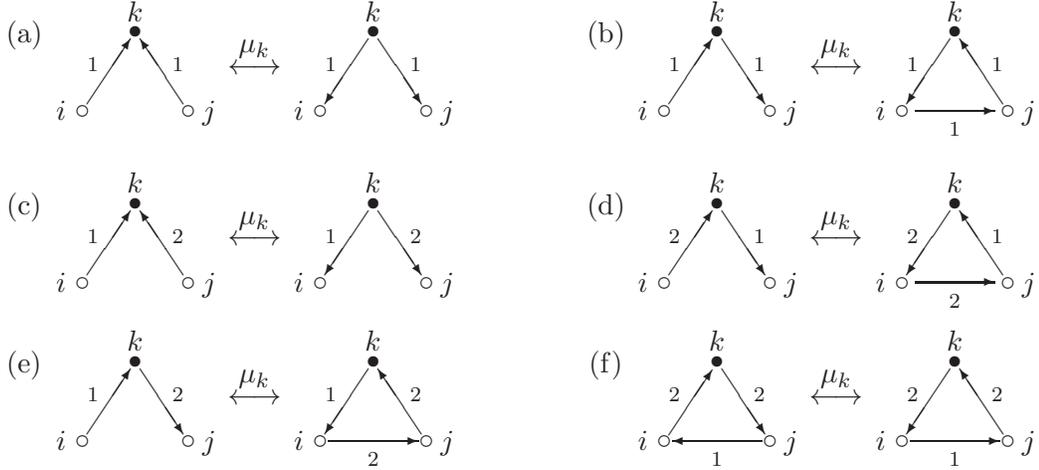

We say that a vertex in a diagram $\Gamma$
is \emph{connected} to another if there is an edge between them.
The next result describes the way vertices in $\Gamma$ can be connected
to a chordless cycle.

\begin{lemma} \label{l:keylemma}
Suppose that a vertex $k$ is connected to at least one vertex in a
chordless cycle in a diagram $\Gamma$ of finite type.
Then $k$ is connected to at most two vertices of the cycle.
If it is connected to two vertices, they must be adjacent in the cycle.
\end{lemma}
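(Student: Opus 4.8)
Here is the plan. I will argue by contradiction, using essentially only one input from the earlier material: that every chordless cycle in a diagram of finite type (in particular, every $3$-cycle) is cyclically oriented, by Proposition~\ref{p:cycles}. The full list of chordless cycles in Figure~\ref{f:cycles} will not be needed, nor will Lemma~\ref{l:3vertex}. Write the given chordless cycle as $C\colon i_0\to i_1\to\cdots\to i_{d-1}\to i_0$ (cyclically oriented, with $d\ge 3$), take $k$ to be a vertex of $\Gamma$ not lying on $C$, and let $N$ be the set of vertices of $C$ joined to $k$, so $|N|\ge 1$. List $N$ in cyclic order along $C$ as $n_1,\dots,n_t$, and for each $j$ (indices mod $t$) let $A_j$ be the directed arc of $C$ running from $n_j$ to $n_{j+1}$ whose interior contains no vertex of $N$; let $\ell_j\ge 1$ be its length, so $\ell_1+\cdots+\ell_t=d$.

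The key step is a local observation at each index $j$. If $\ell_j=1$ then $\{k,n_j,n_{j+1}\}$ spans a $3$-cycle. If $\ell_j\ge 2$ \emph{and} $d-\ell_j\ge 2$, then $n_j$ and $n_{j+1}$ are non-adjacent in $C$, and I claim $\{k\}\cup V(A_j)$ spans a chordless cycle of length $\ell_j+2\ge 4$: the subgraph is induced because $C$ is chordless (so no chords among $V(A_j)$, using non-adjacency of the two endpoints) and because $k$ is joined, among $V(A_j)$, only to the endpoints $n_j,n_{j+1}$ (the interior of $A_j$ misses $N$). In either situation Proposition~\ref{p:cycles} forces this cycle to be cyclically oriented; since its arc part already carries the orientation $n_j\to\cdots\to n_{j+1}$ inherited from $C$, the two remaining edges must be $n_{j+1}\to k$ and $k\to n_j$. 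Thus: whenever the arc $A_j$ closes up through $k$ to a genuine ($3$- or chordless) cycle, we simultaneously obtain $n_{j+1}\to k$ and $k\to n_j$ in $\Gamma$.

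Now I finish by counting. If $|N|=t\ge 3$, then $d-\ell_j=\sum_{i\ne j}\ell_i\ge t-1\ge 2$ for every $j$, so \emph{every} $A_j$ closes up; reading off the conclusion at $j=1$ (namely $n_2\to k$) against that at $j=2$ (namely $k\to n_2$) contradicts the fact that the edge between $n_2$ and $k$ has a single orientation. If $t=2$ and $n_1,n_2$ are non-adjacent in $C$, then $\ell_1,\ell_2\ge 2$ and again both $A_1$ and $A_2$ close up, and the conclusions at the two arcs contradict each other at the edge $kn_1$. This rules out $|N|\ge 3$ and rules out $|N|=2$ with the two vertices non-adjacent, which is exactly the assertion of the lemma; the surviving cases — $|N|=1$, and $|N|=2$ with the two vertices adjacent (where only one arc closes up, because the complementary arc then has a chord, and no contradiction arises) — are precisely the permitted ones.

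The one point that needs care is the bookkeeping in the local observation: checking that $\{k\}\cup V(A_j)$ is an \emph{induced} cycle, which forces me to keep track of both hypotheses $\ell_j\ge 2$ and $d-\ell_j\ge 2$ (equivalently, non-adjacency of $n_j$ and $n_{j+1}$ in $C$), and to exclude the degenerate case $t=1$, in which $A_1$ is all of $C$ and no cycle through $k$ is formed, before invoking this machinery. Once that is handled, the rest is the short orientation argument above.
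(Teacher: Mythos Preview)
Your proof is correct and follows essentially the same route as the paper's: form the cycles obtained by closing each arc of $C$ between consecutive neighbours of $k$ through $k$, invoke Proposition~\ref{p:cycles} to force each such cycle to be cyclically oriented, and read off incompatible orientations on an edge $kn_j$. Your write-up is a bit more systematic in its bookkeeping (you track explicitly the conditions $\ell_j\ge 2$ and $d-\ell_j\ge 2$ needed for the arc-plus-$k$ subgraph to be induced), but the underlying argument is identical.
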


\begin{proof}
Consider a chordless cycle $C$:
$$i_0\rightarrow i_1\rightarrow \cdots \rightarrow i_{r-1}\rightarrow i_0$$
in $\Gamma$, and suppose that $k$ is connected to
$i_{a_1}$, $i_{a_2},\ldots $, $i_{a_t}$ for some $t\leq r$, where
$0\leq a_1<a_2<\cdots <a_t\leq r-1$. Suppose first that $t>2$. Since, by
Proposition~\ref{p:cycles}, the chordless cycles (in the underlying unoriented
graph)
$$\xymatrix{
i_{a_1} \ar@{-}[r] & i_{a_1+1} \ar@{-}[r] & \cdots \ar@{-} & i_{a_2} \ar@{-}[r]
& k \ar@{-}[r] & i_{a_1}
}$$
and
$$
\xymatrix{
i_{a_2} \ar@{-}[r] & i_{a_2+1} \ar@{-}[r] & \cdots \ar@{-} & i_{a_3} \ar@{-}[r]
& k \ar@{-}[r] & i_{a_2}
}$$
are cyclically oriented in $\Gamma$, we must have
$$i_{a_2}\rightarrow k \rightarrow i_{a_1}$$
from the first and
$$i_{a_3}\rightarrow k \rightarrow i_{a_2}$$
from the second, giving a contradiction.

If $r=2$, consider the cycle
$$\xymatrix{
i_{a_2} \ar@{-}[r] & i_{a_2+1} \ar@{-}[r] & \cdots \ar@{-} & i_{a_1} \ar@{-}[r]
& k \ar@{-}[r] & i_{a_2}
}$$
(where the subscripts are written mod $r$). If it is a chordless
cycle, it must be oriented, giving $$i_{a_1}\rightarrow k\rightarrow i_{a_2},$$
again giving a contradiction. Hence this last cycle is not a chordless
cycle, which implies that $i_{a_1}$ and $i_{a_2}$ are adjacent in $C$
as required.
\end{proof}

\begin{lemma} \label{l:newcycles}
Let $\Gamma$ be a diagram of finite type and $\Gamma'=\mu_k(\Gamma)$
the mutation of $\Gamma$ at vertex $k$.
In Figures~\ref{f:mutatecycle} and~\ref{f:mutatecycle2}
we list various types of induced
subdiagrams in $\Gamma$ (on the left) and corresponding cycles $C'$ in
$\Gamma'$ (on the right) arising from the mutation of $\Gamma$ at $k$.
The diagrams are drawn so that $C'$ is always a clockwise cycle, and in
case $(i)$, $C'$ has at least $3$ vertices, while in case (j),
$C'$ has at least $4$ vertices.
Then every chordless cycle in $\Gamma'$ arises in such a way.
\end{lemma}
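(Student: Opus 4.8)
The plan is to take an arbitrary chordless cycle $C'$ in $\Gamma'$ and show that its vertex set, together with $k$ if necessary, induces in $\Gamma$ one of the subdiagrams appearing on the left of Figures~\ref{f:mutatecycle}--\ref{f:mutatecycle2}, with $C'$ recovered on the right. The basic observation, from Proposition~\ref{p:diagrammutation}, is that $\mu_k$ alters $\Gamma$ only by reversing the orientation of the edges incident with $k$ and by changing the weight (and possibly the presence) of edges joining two vertices both connected to $k$; in particular, the set of edges incident with $k$ is unchanged up to orientation, and the induced subdiagram on any vertex set avoiding $k$ and containing at most one neighbour of $k$ is left completely unchanged. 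Throughout I use that, by Proposition~\ref{p:cycles}, every chordless cycle in a finite-type diagram is cyclically oriented and of one of the shapes in Figure~\ref{f:cycles}, and that connected $3$-vertex induced subdiagrams are restricted by Lemma~\ref{l:3vertex}.

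First I would treat the case $k\notin C'$. Applying Lemma~\ref{l:keylemma} to $C'$ inside $\Gamma'$, the vertex $k$ is connected in $\Gamma'$ to at most two vertices of $C'$, and if to two then they are adjacent in $C'$. If $k$ is connected to at most one vertex of $C'$, the induced subdiagram on $V(C')$ coincides in $\Gamma$ and $\Gamma'$, so $C'$ is already a chordless cycle of $\Gamma$. If $k$ is connected to two adjacent vertices $i_a,i_b$ of $C'$, consider the edge between $i_a$ and $i_b$ in $\Gamma$: by Proposition~\ref{p:diagrammutation} its weight $c$ satisfies $c=\max(a,b)-c'$, where $c'>0$ is the weight of $i_ai_b$ in $\Gamma'$ and $a,b$ are the weights of $ki_a,ki_b$. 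If $c=0$, then in $\Gamma$ the vertices of $C'$ together with $k$ form the cycle obtained from $C'$ by replacing the edge $i_ai_b$ with the path $i_a\to k\to i_b$ (or its reverse); since $k$ has no further neighbour in $V(C')$ and all other edges are unchanged, this is a chordless cycle of $\Gamma$, and $\mu_k$ applied to it returns $C'$. This is the variable-length family ($C'$ having at least three vertices). If $c>0$, the same bookkeeping shows $C'$ is itself a chordless cycle of $\Gamma$, now with $k$ forming a triangle with the edge $i_ai_b$.

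Next, the case $k\in C'$. Writing $C'$ as $i\to k\to j\to\cdots\to i$ (cyclically oriented by Proposition~\ref{p:cycles}), in $\Gamma$ we have $k\to i$ and $j\to k$, and chordlessness of $C'$ forces $k$ to be connected, in $\Gamma'$ hence in $\Gamma$, to no vertex of $C'$ other than $i$ and $j$. If $C'$ has at least four vertices, then $i$ and $j$ are non-adjacent in $C'$, so there is no edge $ij$ in $\Gamma'$, whence the mutation rule forces an edge between $i$ and $j$ in $\Gamma$; then $C:=(V(C')\setminus\{k\})$ with this new edge is a chordless cycle of $\Gamma$ of length $|C'|-1\geq 3$ to which $k$ is attached at the two adjacent vertices $i,j$. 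As it must be, since $\mu_k$ is an involution, this is the inverse of the family found above (with $C'$ now of length at least four). If $C'$ has exactly three vertices, the induced subdiagram on $\{i,j,k\}$ in $\Gamma$ is a connected $3$-vertex diagram, and Corollary~\ref{c:localmutation} describes the possibilities directly.

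Finally, in each of the cases above the admissible edge weights must be read off from the constraints of Proposition~\ref{p:cycles} (for the cycles involved) and of Lemma~\ref{l:3vertex} and Corollary~\ref{c:localmutation} (for the $3$-vertex pieces), and this enumeration yields precisely the configurations in Figures~\ref{f:mutatecycle} and~\ref{f:mutatecycle2}. The only real obstacle is this weight bookkeeping: one must check that each abstractly possible pairing of a chordless-cycle shape with one or two extra edges to $k$ is either forbidden by $2$-finiteness or occurs in the figures with the stated orientations and weights, and that the orientations are consistent (for instance, that a spliced cycle is again cyclically oriented, which is automatic from Proposition~\ref{p:cycles} once finite type is known). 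Everything else reduces to matching the two sides and verifying that $\mu_k$ introduces no new chords, which follows from the locality of the mutation rule noted at the outset.
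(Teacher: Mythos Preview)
Your approach is essentially the same case analysis as the paper's: split according to whether $k\in C'$ and, if not, how many vertices of $C'$ are neighbours of $k$ (using Lemma~\ref{l:keylemma}). The paper organizes the two-neighbour case slightly differently, analysing the weights of the triangle $C_k=\{k,i_a,i_b\}$ in $\Gamma'$ rather than your weight $c$ of $i_ai_b$ in $\Gamma$, but the content is the same and both defer the final matching to Proposition~\ref{p:cycles} and Lemma~\ref{l:3vertex}.

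One point deserves correction. In the subcase $k\notin C'$ with two adjacent neighbours $i_a,i_b$, your branch $c>0$ is vacuous, not a genuine case. Since $C'$ is cyclically oriented (say $i_a\to i_b$), the triangle $\{k,i_a,i_b\}$ in $\Gamma'$ forces $i_b\to k\to i_a$; reversing the $k$-edges gives the path $i_a\to k\to i_b$ in $\Gamma$, and Proposition~\ref{p:diagrammutation} then says that if $c>0$ the edge in $\Gamma$ points $i_b\to i_a$. But all other edges of $C'$ are unchanged in $\Gamma$, so the induced subgraph on $V(C')$ in $\Gamma$ would be a chordless cycle with a single edge reversed, contradicting Proposition~\ref{p:cycles}. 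Hence $c=0$ always, and every such $C'$ comes from splicing $k$ into a larger chordless cycle of $\Gamma$; the weight bookkeeping then separates cases (g), (h) (the spliced cycle is a $4$-cycle of type $1,2,1,2$) from case (i) (all weights $1$). Your final paragraph would eventually catch this via the orientation check, but as written the sentence ``$C'$ is itself a chordless cycle of $\Gamma$, now with $k$ forming a triangle'' describes a configuration that does not occur and matches no figure.
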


\begin{figure}[!ht]
\begin{picture}(350,255)
\put(10,5){
  \put(18,0){
    \put(-42,40){\HVCenter{(g)}}
    \put(0,18){
      \put(-18,-18){\circle{4}}
      \put(18,-18){\circle{4}}
      \put(-18,18){\circle*{4}}
      \put(-24,24){\HVCenter{$k$}}
      \put(18,18){\circle{4}}
      \put(-15,18){\vector(1,0){30}}
      \put(-18,-15){\vector(0,1){30}}
      \put(18,15){\vector(0,-1){30}}
      \put(15,-18){\vector(-1,0){30}}
      \put(-23,0){\HVCenter{\scriptsize $2$}}
      \put(23,0){\HVCenter{\scriptsize $2$}}
      \put(0,21){\HBCenter{\scriptsize $1$}}
      \put(0,-21){\HTCenter{\scriptsize $1$}}
    }
    \put(45,18){
      \put(0,0){\HVCenter{$\longrightarrow$}}
      \put(0,6){\HBCenter{$\mu_k$}}
    }
    \put(90,18){
      \put(-18,-18){\circle{4}}
      \put(18,-18){\circle{4}}
      \put(-18,18){\circle*{4}}
      \put(-24,24){\HVCenter{$k$}}
      \put(18,18){\circle{4}}
      \put(15,18){\vector(-1,0){30}}
      \put(-18,15){\vector(0,-1){30}}
      \put(18,15){\vector(0,-1){30}}
      \put(15,-18){\vector(-1,0){30}}
      \put(-18,-18){\put(2,2){\vector(1,1){32}}}
      \put(-23,0){\HVCenter{\scriptsize $2$}}
      \put(23,0){\HVCenter{\scriptsize $2$}}
      \put(0,21){\HBCenter{\scriptsize $1$}}
      \put(0,-21){\HTCenter{\scriptsize $1$}}
      \put(-4,4){\HVCenter{\scriptsize $2$}}
      \put(8,-8){\HVCenter{$C'$}}
    }
  }
  \put(240,0){
    \put(-42,36){\HVCenter{(h)}}
    \put(0,18){
      \put(-18,-18){\circle{4}}
      \put(18,-18){\circle{4}}
      \put(18,18){\circle*{4}}
      \put(24,24){\HVCenter{$k$}}
      \put(-18,18){\circle{4}}
      \put(-15,18){\vector(1,0){30}}
      \put(-18,-15){\vector(0,1){30}}
      \put(18,15){\vector(0,-1){30}}
      \put(15,-18){\vector(-1,0){30}}
      \put(-23,0){\HVCenter{\scriptsize $2$}}
      \put(23,0){\HVCenter{\scriptsize $2$}}
      \put(0,21){\HBCenter{\scriptsize $1$}}
      \put(0,-21){\HTCenter{\scriptsize $1$}}
    }
    \put(45,18){
      \put(0,0){\HVCenter{$\longrightarrow$}}
      \put(0,6){\HBCenter{$\mu_k$}}
    }
    \put(90,18){
      \put(-18,-18){\circle{4}}
      \put(18,-18){\circle{4}}
      \put(18,18){\circle*{4}}
      \put(24,24){\HVCenter{$k$}}
      \put(-18,18){\circle{4}}
      \put(15,18){\vector(-1,0){30}}
      \put(-18,-15){\vector(0,1){30}}
      \put(18,-15){\vector(0,1){30}}
      \put(15,-18){\vector(-1,0){30}}
      \put(-18,18){\put(2,-2){\vector(1,-1){32}}}
      \put(-23,0){\HVCenter{\scriptsize $2$}}
      \put(23,0){\HVCenter{\scriptsize $2$}}
      \put(0,21){\HBCenter{\scriptsize $1$}}
      \put(0,-21){\HTCenter{\scriptsize $1$}}
      \put(4,4){\HVCenter{\scriptsize $2$}}
      \put(-8,-8){\HVCenter{$C'$}}
    }
  }
}
\put(10,75){
  \put(18,0){
    \put(-42,40){\HVCenter{(e)}}
    \put(90,18){
      \put(-18,-18){\circle{4}}
      \put(18,-18){\circle{4}}
      \put(-18,18){\circle*{4}}
      \put(-24,24){\HVCenter{$k$}}
      \put(18,18){\circle{4}}
      \put(-15,18){\vector(1,0){30}}
      \put(-18,-15){\vector(0,1){30}}
      \put(18,15){\vector(0,-1){30}}
      \put(15,-18){\vector(-1,0){30}}
      \put(-23,0){\HVCenter{\scriptsize $2$}}
      \put(23,0){\HVCenter{\scriptsize $2$}}
      \put(0,21){\HBCenter{\scriptsize $1$}}
      \put(0,-21){\HTCenter{\scriptsize $1$}}
      \put(0,0){\HVCenter{$C'$}}
    }
    \put(45,18){
      \put(0,0){\HVCenter{$\longrightarrow$}}
      \put(0,6){\HBCenter{$\mu_k$}}
    }
    \put(0,18){
      \put(-18,-18){\circle{4}}
      \put(18,-18){\circle{4}}
      \put(-18,18){\circle*{4}}
      \put(-24,24){\HVCenter{$k$}}
      \put(18,18){\circle{4}}
      \put(15,18){\vector(-1,0){30}}
      \put(-18,15){\vector(0,-1){30}}
      \put(18,15){\vector(0,-1){30}}
      \put(15,-18){\vector(-1,0){30}}
      \put(-18,-18){\put(2,2){\vector(1,1){32}}}
      \put(-23,0){\HVCenter{\scriptsize $2$}}
      \put(23,0){\HVCenter{\scriptsize $2$}}
      \put(0,21){\HBCenter{\scriptsize $1$}}
      \put(0,-21){\HTCenter{\scriptsize $1$}}
      \put(-4,4){\HVCenter{\scriptsize $2$}}
    }
  }
  \put(240,0){
    \put(-42,36){\HVCenter{(f)}}
    \put(90,18){
      \put(-18,-18){\circle{4}}
      \put(18,-18){\circle{4}}
      \put(18,18){\circle*{4}}
      \put(24,24){\HVCenter{$k$}}
      \put(-18,18){\circle{4}}
      \put(-15,18){\vector(1,0){30}}
      \put(-18,-15){\vector(0,1){30}}
      \put(18,15){\vector(0,-1){30}}
      \put(15,-18){\vector(-1,0){30}}
      \put(-23,0){\HVCenter{\scriptsize $2$}}
      \put(23,0){\HVCenter{\scriptsize $2$}}
      \put(0,21){\HBCenter{\scriptsize $1$}}
      \put(0,-21){\HTCenter{\scriptsize $1$}}
      \put(0,0){\HVCenter{$C'$}}
    }
    \put(45,18){
      \put(0,0){\HVCenter{$\longrightarrow$}}
      \put(0,6){\HBCenter{$\mu_k$}}
    }
    \put(0,18){
      \put(-18,-18){\circle{4}}
      \put(18,-18){\circle{4}}
      \put(18,18){\circle*{4}}
      \put(24,24){\HVCenter{$k$}}
      \put(-18,18){\circle{4}}
      \put(15,18){\vector(-1,0){30}}
      \put(-18,-15){\vector(0,1){30}}
      \put(18,-15){\vector(0,1){30}}
      \put(15,-18){\vector(-1,0){30}}
      \put(-18,18){\put(2,-2){\vector(1,-1){32}}}
      \put(-23,0){\HVCenter{\scriptsize $2$}}
      \put(23,0){\HVCenter{\scriptsize $2$}}
      \put(0,21){\HBCenter{\scriptsize $1$}}
      \put(0,-21){\HTCenter{\scriptsize $1$}}
      \put(4,4){\HVCenter{\scriptsize $2$}}
    }
  }
}
\put(10,145){
  \put(20,0){
    \put(-42,30){\HVCenter{(c)}}
    \put(0,0){
      \put(-20,0){\circle{4}}
      \put(20,0){\circle{4}}
      \put(0,30){\circle*{4}}
      \put(0,35){\HBCenter{$k$}}
      \put(-1.8,27.3){\vector(-2,-3){16.4}}
      \put(18.2,2.7){\vector(-2,3){16.4}}
      \put(-16,15){\HBCenter{\scriptsize $1$}}
      \put(16,15){\HBCenter{\scriptsize $2$}}
    }
    \put(45,16){
      \put(0,0){\HVCenter{$\longrightarrow$}}
      \put(0,6){\HBCenter{$\mu_k$}}
    }
    \put(90,0){
      \put(-20,0){\circle{4}}
      \put(20,0){\circle{4}}
      \put(0,30){\circle*{4}}
      \put(0,35){\HBCenter{$k$}}
      \put(-18.2,2.7){\vector(2,3){16.4}}
      \put(1.4,27.3){\vector(2,-3){16.4}}
      \put(17,0){\vector(-1,0){34}}
      \put(-16,15){\HBCenter{\scriptsize $1$}}
      \put(16,15){\HBCenter{\scriptsize $2$}}
      \put(0,-7){\HVCenter{\scriptsize $2$}}
      \put(0,10){\HVCenter{$C'$}}
    }
  }
  \put(240,0){
    \put(-42,30){\HVCenter{(d)}}
    \put(0,0){
      \put(-20,0){\circle{4}}
      \put(20,0){\circle{4}}
      \put(0,30){\circle*{4}}
      \put(0,35){\HBCenter{$k$}}
      \put(-1.8,27.3){\vector(-2,-3){16.4}}
      \put(18.2,2.7){\vector(-2,3){16.4}}
      \put(-17,0){\vector(1,0){34}}
      \put(-16,15){\HBCenter{\scriptsize $2$}}
      \put(16,15){\HBCenter{\scriptsize $2$}}
      \put(0,-7){\HVCenter{\scriptsize $1$}}
    }
    \put(45,16){
      \put(0,0){\HVCenter{$\longrightarrow$}}
      \put(0,6){\HBCenter{$\mu_k$}}
    }
    \put(90,0){
      \put(-20,0){\circle{4}}
      \put(20,0){\circle{4}}
      \put(0,30){\circle*{4}}
      \put(0,35){\HBCenter{$k$}}
      \put(-18.2,2.7){\vector(2,3){16.4}}
      \put(1.8,27.3){\vector(2,-3){16.4}}
      \put(17,0){\vector(-1,0){34}}
      \put(-16,15){\HBCenter{\scriptsize $2$}}
      \put(16,15){\HBCenter{\scriptsize $2$}}
      \put(0,-7){\HVCenter{\scriptsize $1$}}
      \put(0,10){\HVCenter{$C'$}}
    }
  }
}
\put(10,210){
  \put(20,0){
    \put(-42,30){\HVCenter{(a)}}
    \put(0,0){
      \put(-20,0){\circle{4}}
      \put(20,0){\circle{4}}
      \put(0,30){\circle*{4}}
      \put(0,35){\HBCenter{$k$}}
      \put(-1.8,27.3){\vector(-2,-3){16.4}}
      \put(18.2,2.7){\vector(-2,3){16.4}}
      \put(-16,15){\HBCenter{\scriptsize $1$}}
      \put(16,15){\HBCenter{\scriptsize $1$}}
    }
    \put(45,16){
      \put(0,0){\HVCenter{$\longrightarrow$}}
      \put(0,6){\HBCenter{$\mu_k$}}
    }
    \put(90,0){
      \put(-20,0){\circle{4}}
      \put(20,0){\circle{4}}
      \put(0,30){\circle*{4}}
      \put(0,35){\HBCenter{$k$}}
      \put(-18.2,2.7){\vector(2,3){16.4}}
      \put(1.4,27.3){\vector(2,-3){16.4}}
      \put(17,0){\vector(-1,0){34}}
      \put(-16,15){\HBCenter{\scriptsize $1$}}
      \put(16,15){\HBCenter{\scriptsize $1$}}
      \put(0,-7){\HVCenter{\scriptsize $1$}}
      \put(0,10){\HVCenter{$C'$}}
    }
  }
  \put(240,0){
    \put(-42,30){\HVCenter{(b)}}
    \put(0,0){
      \put(-20,0){\circle{4}}
      \put(20,0){\circle{4}}
      \put(0,30){\circle*{4}}
      \put(0,35){\HBCenter{$k$}}
      \put(-1.8,27.3){\vector(-2,-3){16.4}}
      \put(18.2,2.7){\vector(-2,3){16.4}}
      \put(-16,15){\HBCenter{\scriptsize $2$}}
      \put(16,15){\HBCenter{\scriptsize $1$}}
    }
    \put(45,16){
      \put(0,0){\HVCenter{$\longrightarrow$}}
      \put(0,6){\HBCenter{$\mu_k$}}
    }
    \put(90,0){
      \put(-20,0){\circle{4}}
      \put(20,0){\circle{4}}
      \put(0,30){\circle*{4}}
      \put(0,35){\HBCenter{$k$}}
      \put(-18.2,2.7){\vector(2,3){16.4}}
      \put(1.4,27.3){\vector(2,-3){16.4}}
      \put(17,0){\vector(-1,0){34}}
      \put(-16,15){\HBCenter{\scriptsize $2$}}
      \put(16,15){\HBCenter{\scriptsize $1$}}
      \put(0,-7){\HVCenter{\scriptsize $2$}}
      \put(0,10){\HVCenter{$C'$}}
    }
  }
}
\end{picture}
\caption{Induced subdiagrams of $\Gamma$ and the corresponding chordless cycles
in $\Gamma'=\mu_k(\Gamma)$, part 1.}
\label{f:mutatecycle}
\end{figure}
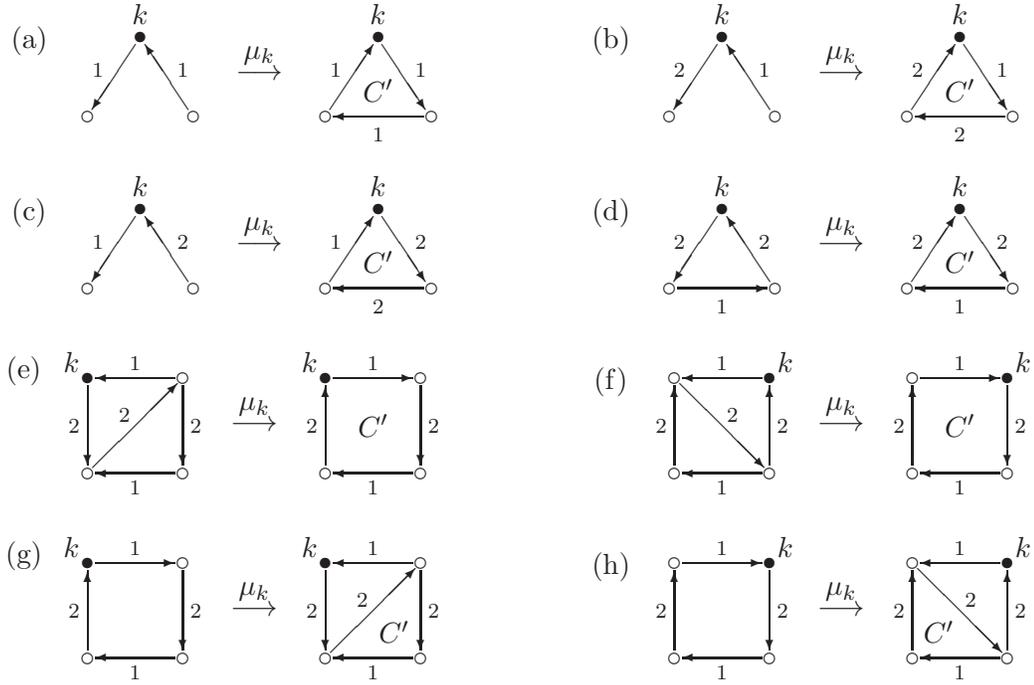

\begin{figure}[!ht]
\begin{center}
\begin{picture}(346,140)
\put(56,90){
  \put(-56,40){\HVCenter{(i)}}
  \put(0,20){
    \multiput(0,-20)(0,40){2}{\circle{4}}
    \multiput(40,-20)(0,40){2}{\circle{4}}
    \multiput(80,-20)(0,40){2}{\circle{4}}
    \put(-30,0){\circle*{4}}
    \put(-38,0){\HVCenter{\small $k$}}
    \multiput(0,20)(40,0){2}{\put(3,0){\vector(1,0){34}}}
    \multiput(40,-20)(40,0){2}{\put(-3,0){\vector(-1,0){34}}}
    \put(0,-20){\put(-2.4,1.8){\vector(-3,2){26.2}}}
    \put(-30,0){\put(2.4,1.8){\vector(3,2){26.2}}}
    \put(80,20){\qbezier(3,0)(15,0)(15,-15)}
    \put(80,-20){\qbezier(3,0)(15,0)(15,15)\put(5,0){\vector(-1,0){2}}}
    \multiput(95,-3.5)(0,3){3}{\line(0,1){1}}
    \put(-19,14.3){\HVCenter{\scriptsize $1$}}
    \put(-19,-14.3){\HVCenter{\scriptsize $1$}}
    \multiput(20,26.3)(40,0){2}{\HVCenter{\scriptsize $1$}}
    \multiput(20,-26.3)(40,0){2}{\HVCenter{\scriptsize $1$}}
  }
  \put(130,20){
    \put(0,0){\HVCenter{$\longrightarrow$}}
    \put(0,7){\HVCenter{$\mu_k$}}
  }
  \put(195,20){
    \multiput(0,-20)(0,40){2}{\circle{4}}
    \multiput(40,-20)(0,40){2}{\circle{4}}
    \multiput(80,-20)(0,40){2}{\circle{4}}
    \put(-30,0){\circle*{4}}
    \put(-38,0){\HVCenter{\small $k$}}
    \multiput(0,20)(40,0){2}{\put(3,0){\vector(1,0){34}}}
    \multiput(40,-20)(40,0){2}{\put(-3,0){\vector(-1,0){34}}}
    \put(0,-17){\vector(0,1){34}}
    \put(0,20){\put(-2.4,-1.8){\vector(-3,-2){26.2}}}
    \put(-30,0){\put(2.4,-1.8){\vector(3,-2){26.2}}}
    \put(80,20){\qbezier(3,0)(15,0)(15,-15)}
    \put(80,-20){\qbezier(3,0)(15,0)(15,15)\put(5,0){\vector(-1,0){2}}}
    \multiput(95,-3.5)(0,3){3}{\line(0,1){1}}
    \put(-19,14.3){\HVCenter{\scriptsize $1$}}
    \put(-19,-14.3){\HVCenter{\scriptsize $1$}}
    \multiput(20,26.3)(40,0){2}{\HVCenter{\scriptsize $1$}}
    \multiput(20,-26.3)(40,0){2}{\HVCenter{\scriptsize $1$}}
    \put(5.8,0){\HVCenter{\scriptsize $1$}}
    \put(40,0){\HVCenter{$C'$}}
  }
}
\put(56,15){
  \put(-56,40){\HVCenter{(j)}}
  \put(195,20){
    \multiput(0,-20)(0,40){2}{\circle{4}}
    \multiput(40,-20)(0,40){2}{\circle{4}}
    \multiput(80,-20)(0,40){2}{\circle{4}}
    \put(-30,0){\circle*{4}}
    \put(-38,0){\HVCenter{\small $k$}}
    \multiput(0,20)(40,0){2}{\put(3,0){\vector(1,0){34}}}
    \multiput(40,-20)(40,0){2}{\put(-3,0){\vector(-1,0){34}}}
    \put(0,-20){\put(-2.4,1.8){\vector(-3,2){26.2}}}
    \put(-30,0){\put(2.4,1.8){\vector(3,2){26.2}}}
    \put(80,20){\qbezier(3,0)(15,0)(15,-15)}
    \put(80,-20){\qbezier(3,0)(15,0)(15,15)\put(5,0){\vector(-1,0){2}}}
    \multiput(95,-3.5)(0,3){3}{\line(0,1){1}}
    \put(-19,14.3){\HVCenter{\scriptsize $1$}}
    \put(-19,-14.3){\HVCenter{\scriptsize $1$}}
    \multiput(20,26.3)(40,0){2}{\HVCenter{\scriptsize $1$}}
    \multiput(20,-26.3)(40,0){2}{\HVCenter{\scriptsize $1$}}
    \put(40,0){\HVCenter{$C'$}}
  }
  \put(130,20){
    \put(0,0){\HVCenter{$\longrightarrow$}}
    \put(0,7){\HVCenter{$\mu_k$}}
  }
  \put(0,20){
    \multiput(0,-20)(0,40){2}{\circle{4}}
    \multiput(40,-20)(0,40){2}{\circle{4}}
    \multiput(80,-20)(0,40){2}{\circle{4}}
    \put(-30,0){\circle*{4}}
    \put(-38,0){\HVCenter{\small $k$}}
    \multiput(0,20)(40,0){2}{\put(3,0){\vector(1,0){34}}}
    \multiput(40,-20)(40,0){2}{\put(-3,0){\vector(-1,0){34}}}
    \put(0,-17){\vector(0,1){34}}
    \put(0,20){\put(-2.4,-1.8){\vector(-3,-2){26.2}}}
    \put(-30,0){\put(2.4,-1.8){\vector(3,-2){26.2}}}
    \put(80,20){\qbezier(3,0)(15,0)(15,-15)}
    \put(80,-20){\qbezier(3,0)(15,0)(15,15)\put(5,0){\vector(-1,0){2}}}
    \multiput(95,-3.5)(0,3){3}{\line(0,1){1}}
    \put(-19,14.3){\HVCenter{\scriptsize $1$}}
    \put(-19,-14.3){\HVCenter{\scriptsize $1$}}
    \multiput(20,26.3)(40,0){2}{\HVCenter{\scriptsize $1$}}
    \multiput(20,-26.3)(40,0){2}{\HVCenter{\scriptsize $1$}}
    \put(5.8,0){\HVCenter{\scriptsize $1$}}
  }
}
\end{picture}
\vskip5pt
\begin{picture}(346,65)
\put(30,0){
  \put(0,60){
    \put(-30,0){\HTCenter{(k)}}
    \put(0,0){\LTCenter{\parbox[l]{314pt}{$C$ is an oriented
          cycle in $\Gamma$ not connected to $k$.
          Then $C'$ is the corresponding cycle in $\Gamma'$.}}}
  }
  \put(0,25){
    \put(-30,0){\HTCenter{(l)}}
    \put(0,0){\LTCenter{\parbox[l]{314pt}{$C$ is an oriented cycle
            in $\Gamma$ with exactly one vertex connected to $k$ (via an edge of unspecified weight).
            Then $C'$ is the corresponding cycle in $\Gamma'$.}}}
  }
}
\end{picture}
\end{center}
\caption{Induced subdiagrams of $\Gamma$ and the corresponding chordless cycles
in $\Gamma'=\mu_k(\Gamma)$, part 2.}
\label{f:mutatecycle2}
\end{figure}
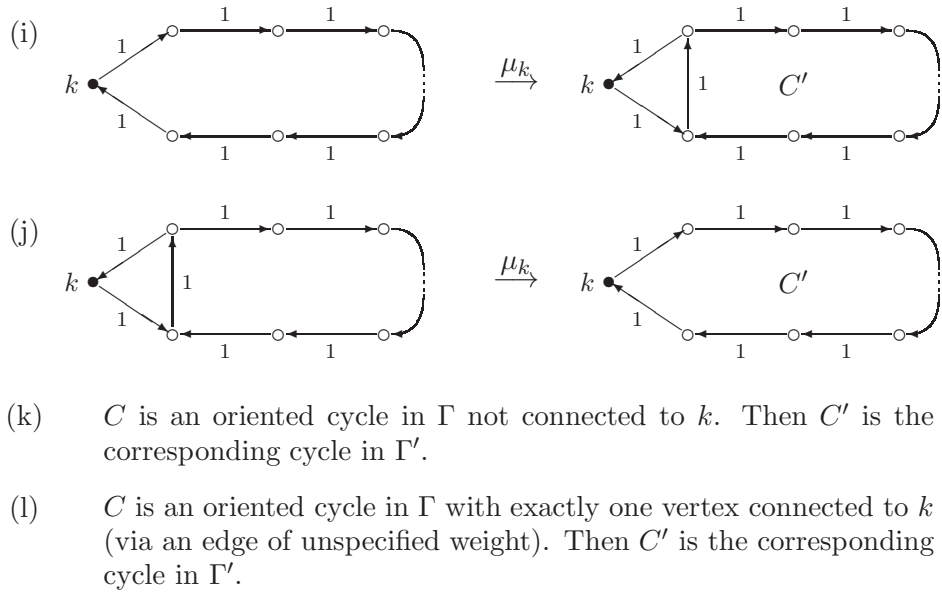

\begin{proof}
Let $C'$ be a chordless cycle in $\Gamma'$.
We consider the different possibilities for the vertex $k$ in relation to $C'$.
If $k$ is a vertex of $C'$, then it follows from Proposition~\ref{p:cycles}
that $C'$ arises as in case (a)-(f) or (j).
If $k$ does not lie in $C'$ and is not connected to any vertex of $C'$
then $C'$ arises as in case (k).
If $k$ does not lie in $C'$ and is connected to exactly one vertex of $C'$,
then $C'$ arises as in case (l). Note that the edge linking $k$ to $C'$
must have weight $1$ or $2$ (since a diagram with two edges with weights
$3$ and $a=1,2$ or $3$ is not of finite type; see the proof
of~\cite[Prop.\ 9.3]{fominzelevinsky2}).

By Lemma~\ref{l:keylemma}, the only remaining case is where $k$ does not
lie in $C'$ and $k$ is connected to two adjacent vertices of $C'$.
If the $3$-cycle $C_k$ containing these $3$ vertices in $\Gamma'$
has all of its weights equal to $1$, then $C'$ also has this
property, since the vertices in $C_k\cup C'$ form a chordless cycle
in $\Gamma$ containing two consecutive edges of weight $1$.
Then $C'$ is as in case (i).

If $C_k$ has weights $1,2,2$ then it is not possible for the edge of
weight $1$ to be shared with $C'$, since then in $\Gamma$,
the vertices of $C'$ form a chordless cycle which is not oriented.
If the common edge has weight $2$ and $C'$ has more than $3$ vertices,
then the vertices of $C_k\cup C'$ form a chordless cycle in $\Gamma$
with at least $5$ vertices and at least one edge of weight $2$, contradicting
Proposition~\ref{p:cycles}. The only other possibility is when
$C'$ has exactly $3$ vertices, and then, by Proposition~\ref{p:cycles},
it must arise as in (g) or (h). The result is proved.
\end{proof}

\section{The group of a diagram arising in a cluster algebra of finite type}
\label{s:groupdefinition}

In this section we define a group (by generators and relations) for any
diagram $\Gamma$ of finite type. We shall see later that it is isomorphic to the
reflection group of the same Dynkin type as the cluster algebra in
which $\Gamma$ arises.

Let $\Gamma$ be a diagram of finite type. For vertices $i,j$ of $\Gamma$
define
$$m_{ij}=
\begin{cases}
2 & \text{if $i$ and $j$ are not connected;} \\
3 & \text{if $i$ and $j$ are connected by an edge of weight $1$;} \\
4 & \text{if $i$ and $j$ are connected by an edge of weight $2$;} \\
6 & \text{if $i$ and $j$ are connected by an edge of weight $3$.}
\end{cases}
$$
Then we define $W_{\Gamma}$ to be the group with generators $s_i$,
$i=1,2,\ldots ,n$, subject to the following relations:
\begin{enumerate}
\item[(R1)] $s_i^2=e$ for all $i$,
\item[(R2)] $(s_is_j)^{m_{ij}}=e$ for all $i\not=j$,
\end{enumerate}
where $e$ denotes the identity element of $W_\Gamma$.

Note that, in the presence of (R1), relation (R2) is symmetric
in that $(s_js_i)^{m_{ji}}=e$ follows from
$(s_is_j)^{m_{ij}}=e$, since $m_{ij}$ is symmetric in $i$ and $j$.
We shall usually use this fact without comment.

For the remaining relations, we suppose that $C$, given by:
$$i_0\rightarrow i_1\rightarrow \cdots \rightarrow i_{d-1}\rightarrow i_0$$
is a chordless cycle in $\Gamma$. For $a=0,1,2,\ldots ,d-1$, define
(with subscripts modulo $d$):
$$r(i_a,i_{a+1})=s_{i_a}s_{i_{a+1}}\cdots s_{i_{a+d-1}}s_{i_{a+d-2}}\cdots s_{i_{a+1}}.$$

\begin{enumerate}
\item[(R3)(a)] If all of the weights in the edges of $C$ are equal to $1$,
then we set $r(i_a,i_{a+1})^2=e$ for $a=0,1,\ldots ,d-1$.
\item[(R3)(b)] If $C$ has some edges of weight $2$, then, for $a=0,1,\ldots ,d-1$, set
$r(i_a,i_{a+1})^k=e$ where $k=4-w_a$ and $w_a$ is the weight of the edge between
$i_a$ and $i_{a-1}$.
\end{enumerate}

Note that if we think of $r(i_a,i_{a+1})$
as a path in $\Gamma$, then in (R3)(b), $w_a$
is the weight of the only edge in $C$ not used in the path.

\begin{remark} \label{r:Dynkincase} \rm
We note that if $\Gamma$ is the diagram associated to a matrix whose
Cartan counterpart is the Cartan matrix of a Dynkin diagram $\Delta$,
then $W_{\Gamma}$ is the reflection group of type $\Delta$.
In this case, the relations (R3)(a) and
(R3)(b) do not occur, since $\Gamma$ is a tree.
\end{remark}

\begin{remark} \label{r:braid} \rm
We also note that, as in Coxeter groups,
if $i\not=j$ then the relation (R2) for the pair $i,j$ is
equivalent (using (R1)) to:
\begin{equation*} \label{e:braid}
s_is_js_i\cdots =s_js_is_j\cdots,
\end{equation*}
where there are $m_{ij}$ terms on each side of the equation.
In particular, if there is no edge in $\Gamma$ between $i$ and $j$,
then $s_is_j=s_is_j$, and if there is an edge in $\Gamma$ between $i$ and
$j$ with weight $1$, then $s_is_js_i=s_js_is_j$. We shall use these
relations liberally.
\end{remark}

\begin{remark} \rm
We shall see later (using the companion basis perspective) that,
in the simply-laced case, the group $W_{\Gamma}$ coincides with a group
defined in~\cite[Defn.\ 6.2]{cst}. See the end of
Section~\ref{s:companionbasis}. In this context, the elements
$r(i_a,i_{a+1})^2$ are referred to as \emph{cut elements}.
\end{remark}

\begin{example} \label{e:d4example}
We consider the example in Figure~\ref{f:d4example}, which is the diagram
of a seed of type $D_4$.

\begin{figure}[!ht]
\begin{center}
\begin{picture}(40,40)
  \put(20,20){
    \multiput(-20,-20)(40,0){2}{\circle{4}}
    \multiput(-20,20)(40,0){2}{\circle{4}}
    \put(-20,-17){\vector(0,1){34}}
    \put(-17,20){\vector(1,0){34}}
    \put(20,17){\vector(0,-1){34}}
    \put(17,-20){\vector(-1,0){34}}
    \put(0,27){\HVCenter{\scriptsize $1$}}
    \put(0,-27){\HVCenter{\scriptsize $1$}}
    \put(27,0){\HVCenter{\scriptsize $1$}}
    \put(-27,0){\HVCenter{\scriptsize $1$}}
    \put(-26,26){\HVCenter{$1$}}
    \put(26,26){\HVCenter{$2$}}
    \put(26,-26){\HVCenter{$3$}}
    \put(-26,-26){\HVCenter{$4$}}
  }
\end{picture}
\end{center}
\caption{A diagram of type $D_4$.}
\label{f:d4example}
\end{figure}
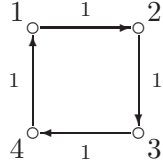

The relations in this case are:
\begin{enumerate}
\item[(R1)] $s_i^2=e$ for $i=1,2,3,4$;
\item[(R2)] $(s_1s_2)^3=(s_2s_3)^3=(s_3s_4)^3=(s_4s_1)^3=e$ and
$(s_1s_3)^2=(s_2s_4)^2=e$;
\item[(R3)] For each element $r(i,j)$ of the following list we have
$r(i,j)^2=e$.
\begin{align*}
r(1,2) &= s_1s_2s_3s_4s_3s_2; \\
r(2,3) &= s_2s_3s_4s_1s_4s_3; \\
r(3,4) &= s_3s_4s_1s_2s_1s_4; \\
r(4,1) &= s_4s_1s_2s_3s_2s_1.
\end{align*}
\end{enumerate}

We shall see in the next section that the relations in (R3)
can be reduced (in fact, it is enough to choose just one of them).
\end{example}

\section{Cyclic Symmetry}
It turns out that, in the presence of the relations (R1) and (R2) in the
above, many of the relations in (R3)(a) and (b) are redundant.
In the sequel, we shall usually use (R1) without comment.

\begin{lemma} \label{l:ncyclesymmetry}
Let $\Gamma$ be a diagram of finite type containing a chordless cycle $C$:
$$i_0\rightarrow i_1\rightarrow \cdots \rightarrow i_{d-1}\rightarrow i_0$$
with the weights of all of its edges equal to $1$.
Let $W$ be the group with generators $s_1,s_2,\ldots ,s_n$ subject
to relations (R1), (R2) and $r(i_a,i_{a+1})^2=e$ for a
single fixed value of $a$. Then all of the relations in (R3)(a) hold for $C$.
\end{lemma}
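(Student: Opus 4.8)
The plan is to show that the single relation $r(i_0,i_1)^2 = e$ propagates, via (R1) and (R2), to all of the relations $r(i_a,i_{a+1})^2 = e$. By symmetry and induction it suffices to show that $r(i_0,i_1)^2 = e$ implies $r(i_1,i_2)^2 = e$, i.e.\ that the cyclic shift of the "palindrome word" $r(i_a,i_{a+1}) = s_{i_a}s_{i_{a+1}}\cdots s_{i_{a+d-1}}s_{i_{a+d-2}}\cdots s_{i_{a+1}}$ by one step preserves the relation. Writing $t_a = s_{i_a}$ throughout (subscripts mod $d$), the word is $r_a = t_a t_{a+1}\cdots t_{a+d-1} t_{a+d-2}\cdots t_{a+1}$, and the goal reduces to deducing $r_1^2 = e$ from $r_0^2 = e$, using only $t_a^2 = e$, the braid relations $t_a t_{a+1} t_a = t_{a+1} t_a t_{a+1}$ for the $d$ cycle edges, and commutations $t_a t_b = t_b t_a$ for non-adjacent vertices of $C$ (here I use that $C$ is chordless, so two vertices of $C$ are connected in $\Gamma$ iff they are adjacent in $C$).

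First I would record the relevant conjugation identity: since $r_0 = t_0\,(t_1\cdots t_{d-1} t_{d-2}\cdots t_1)$ and the parenthesised part is exactly $r_1$ with $t_0$ deleted from both ends — more precisely $r_0 = t_0\, r_1'\, $ where $r_1' = t_1 t_2\cdots t_{d-1} t_{d-2}\cdots t_1$ — one sees that $r_1'$ is a conjugate-like rearrangement of $r_1$. The cleanest route is to interpret $r_a$ geometrically: in the would-be reflection group, $t_a t_{a+1}\cdots t_{a+d-1} t_{a+d-2}\cdots t_{a+1}$ is the reflection in a certain root $\gamma_a$ obtained by applying the product $t_a t_{a+1}\cdots t_{a+d-2}$ to the simple-like root of $t_{a+d-1}$; the relation $r_a^2 = e$ just says this element is an involution, which is automatic once we know $r_a$ is itself a conjugate of a generator. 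So the substantive claim is: the relations (R1),(R2) already force $r_0$ to be a reflection (a conjugate of some $t_c$), and then $r_0^2 = e$ holds automatically — but that would prove too much, so instead the point must be that (R1),(R2) force $r_0$ and $r_1$ to be \emph{simultaneously} conjugate or not. The honest approach, then, is combinatorial: I would show by an explicit word manipulation that $r_1 = u\, r_0\, u^{-1}$ for an explicit word $u$ in the $t_a$, using only braid moves and commutations; then $r_0^2 = e \Rightarrow r_1^2 = (u r_0 u^{-1})^2 = u r_0^2 u^{-1} = e$.

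To find $u$: start from $r_1 = t_1\cdots t_{d-1}\, t_{d-2}\cdots t_1$ and push the leftmost relevant generator through. Because $t_0$ is adjacent in $C$ only to $t_1$ and $t_{d-1}$, it commutes with $t_2,\dots,t_{d-2}$. Using this, one checks $t_0\, r_1\, t_0 = t_0 t_1 \cdots t_{d-1} t_{d-2}\cdots t_1 t_0$, and then a sequence of braid relations at the two "ends" (at the $t_0 t_1 t_2$-end on the left, converting $t_0 t_1$ past $t_2$ appropriately, and symmetrically $t_1 t_0$ on the right) together with the commutations telescopes this into $r_0 = t_0 t_1\cdots t_{d-1} t_{d-2}\cdots t_1$. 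Thus $r_1 = t_0\, r_0\, t_0$ up to the tail adjustments, giving the required conjugacy with $u$ a short explicit word; iterating the shift $a \mapsto a+1$ then yields all of (R3)(a) for $C$. The main obstacle I anticipate is bookkeeping in this telescoping step: one must verify carefully that at each stage the generator being moved is genuinely non-adjacent (in $C$) to everything it passes, which relies on chordlessness, and that the two braid moves at the endpoints are applied in a consistent order; the length-$d$ generic case needs the induction on the "distance moved" to be set up so that the commutations and the single braid move at each step compose correctly. Once the conjugacy $r_1 \sim r_0$ is established the rest is immediate.
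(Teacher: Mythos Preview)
Your strategy is the same as the paper's: exhibit $r_{a+1}$ (or $r_{a-1}$) as a conjugate of $r_a$ using only (R1) and (R2), then iterate. The paper does this in the backward direction, showing $r(d-1,0)=s_0s_{d-1}\,r(0,1)\,s_{d-1}s_0$ via one braid move $s_{d-1}s_{d-2}s_{d-1}=s_{d-2}s_{d-1}s_{d-2}$ and commutations of $s_{d-1}$ past $s_1,\ldots,s_{d-3}$ (valid by chordlessness). Your forward version works equally well, but your write-up contains two concrete errors that would derail the computation if left uncorrected.

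First, your explicit formula for $r_1$ is wrong: with subscripts read mod $d$, $r_1 = t_1 t_2\cdots t_{d-1}\,t_0\,t_{d-1}\cdots t_2$, not $t_1\cdots t_{d-1}t_{d-2}\cdots t_1$. The peak of the palindrome shifts to $t_0$ and the tail ends at $t_2$, not $t_1$; what you wrote is your auxiliary word $r_1'$, and you have silently conflated the two. Second, the conjugator is not $t_0$ alone: the correct identity is $r_1 = t_0 t_1\, r_0\, t_1 t_0$. To see this, compute
\[
t_0 t_1\, r_0\, t_1 t_0
= (t_0 t_1 t_0) t_1 t_2\cdots t_{d-1} t_{d-2}\cdots t_2\, t_0
= t_1 t_0\, t_2\cdots t_{d-1} t_{d-2}\cdots t_2\, t_0,
\]
then commute each $t_0$ past $t_2,\ldots,t_{d-2}$ (chordlessness) to obtain $t_1 t_2\cdots t_{d-2}\,(t_0 t_{d-1} t_0)\,t_{d-2}\cdots t_2$, and finally apply the braid relation $t_0 t_{d-1} t_0 = t_{d-1} t_0 t_{d-1}$ to get $r_1$. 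This is exactly the ``one braid move at the peak, commutations elsewhere'' you anticipated, but with the right target word and the right two-letter conjugator. Once these are fixed, your argument is complete and coincides with the paper's up to the direction of the cyclic shift.
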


\begin{proof}
Note first that it is enough to consider the case $a=0$.
For simplicity of notation,
we renumber the vertices of $C$ so that $i_j=j$ for $j=0,1,\ldots ,a-1$.
Thus we assume that $r(0,1)^2=e$ in $W$.

We have, using (R2):
\begin{align*}
r(d-1,0) &= s_{d-1}s_0s_1\cdots s_{d-3}s_{d-2}s_{d-3}\cdots s_1s_0 \\
&= s_0s_0s_{d-1}s_0s_1\cdots s_{d-3}s_{d-2}s_{d-3}\cdots s_1s_{d-1}s_{d-1}s_0 \\
&= s_0s_{d-1}s_0s_{d-1}s_1\cdots s_{d-3}s_{d-2}s_{d-3}\cdots s_1s_{d-1}s_{d-1}s_0 \\
&= s_0s_{d-1}(s_0s_1\cdots s_{d-3}s_{d-1}s_{d-2}s_{d-1}s_{d-3}\cdots s_1)s_{d-1}s_0 \\
&= s_0s_{d-1}(s_0s_1\cdots s_{d-3}s_{d-2}s_{d-1}s_{d-2}s_{d-3}\cdots s_1)s_{d-1}s_0 \\
&= s_0s_{d-1}r(0,1)s_{d-1}s_0.
\end{align*}

It follows that $r(d-1,0)^2=e$.
Repeating this argument inductively, we see that $r(a,a+1)^2=e$ for all $a$,
and we are done.
\end{proof}

\begin{lemma} \label{l:3cyclesymmetry}
Let $\Gamma$ be a diagram of finite type containing a chordless $3$-cycle $C$
as in Figure~\ref{f:3cyclesymmetry} (where we number the vertices $1,2,3$
for convenience).
Let $W$ be the group with generators $s_1,s_2,\ldots ,s_n$ subject
to relations (R1) and (R2) arising from the diagram $\Gamma$.
Then the following are equivalent:
\begin{enumerate}
\item[(a)] $r(1,2)^2=e$;
\item[(b)] $r(2,3)^2=e$.
\end{enumerate}
Furthermore, if one of the above holds, then the following holds:
\begin{enumerate}
\item[(c)] $r(3,1)^3=e$.
\end{enumerate}
\end{lemma}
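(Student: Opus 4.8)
The plan is to do everything inside the group $W$ presented by (R1) and (R2), and to observe that once we set $u=s_1s_3$ and $v=s_2s_3$, each of $r(1,2),r(2,3),r(3,1)$ becomes a very short word in $u$ and $v$. The $3$-cycle of Figure~\ref{f:3cyclesymmetry} has the edge $\{2,3\}$ of weight $1$ and the edges $\{1,2\}$, $\{1,3\}$ of weight $2$ (this is precisely what makes (R3)(b) produce the exponents $2,2,3$ appearing in (a)--(c)), so by (R2) we have $u^4=e$ and $v^3=e$, and by Remark~\ref{r:braid} the braid relation $s_2s_3s_2=s_3s_2s_3$ holds.

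First I would straighten the three words. Using $s_2s_3s_2=s_3s_2s_3$ gives $r(1,2)=s_1s_2s_3s_2=s_1s_3s_2s_3=uv$; directly $r(2,3)=s_2s_3s_1s_3=vu$; and inserting $s_3^2=e$ gives $r(3,1)=s_3s_1s_2s_1=s_3s_1s_2s_3s_3s_1=(s_3s_1)(s_2s_3)(s_3s_1)=u^{-1}vu^{-1}$. For the equivalence (a)$\Leftrightarrow$(b) I observe that $r(2,3)=vu=u^{-1}(uv)u=u^{-1}\,r(1,2)\,u$, so $r(1,2)$ and $r(2,3)$ are conjugate in $W$ and hence have equal order; in particular one squares to $e$ exactly when the other does. (Note this step uses neither hypothesis (a) nor the weight of the edge $\{1,2\}$.)

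For (c), suppose (a) holds, i.e.\ $(uv)^2=e$; equivalently $vuv=u^{-1}$, equivalently $vu=u^{-1}v^{-1}$, and therefore $vu^2=u^{-1}v^{-1}u$. The right-hand side is a conjugate of $v^{-1}$, so $(vu^2)^3=u^{-1}v^{-3}u=e$ since $v^3=e$. Finally $r(3,1)=u^{-1}vu^{-1}$ is conjugate (by $u$) to $vu^{-2}$, and $u^{-2}=u^2$ because $u^4=e$; hence $r(3,1)^3=e$, which is (c).

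I do not expect a genuine obstacle here: the whole argument reduces to the substitution $u=s_1s_3$, $v=s_2s_3$ together with two elementary conjugacy observations. The only points requiring care are selecting the correct braid relation to rewrite $r(1,2)$, and — so that the lemma comes out exactly as stated — keeping track of the fact that hypothesis (a) is invoked only in deriving (c) and not in the equivalence (a)$\Leftrightarrow$(b).
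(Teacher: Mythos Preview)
Your proof is correct and is essentially the paper's conjugacy argument, repackaged via the substitution $u=s_1s_3$, $v=s_2s_3$: the paper also conjugates by $s_1s_3$ to pass between $r(1,2)^2$ and $r(2,3)^2$, and for (c) conjugates $r(3,1)^3$ (by $s_1s_3s_1$) and then uses (b) together with $(s_1s_3)^4=e$ to reduce to $(s_2s_3)^3=e$. Your reformulation in terms of $u$ and $v$ makes the two conjugacies and the role of each edge weight more transparent, but the underlying steps coincide.
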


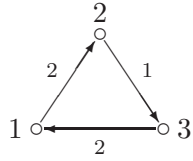
\begin{figure}[!ht]
\begin{center}
\begin{picture}(48,50)
  \put(24,5){
    \multiput(-24,0)(48,0){2}{\circle{4}}
    \put(0,36){\circle{4}}
    \put(21,0){\vector(-1,0){42}}
    \put(1.8,33.3){\vector(2,-3){20.4}}
    \put(-22.4,2.5){\vector(2,3){20.4}}
    \put(0,-7){\HVCenter{\scriptsize $2$}}
    \put(18,22){\HVCenter{\scriptsize $1$}}
    \put(-18,22){\HVCenter{\scriptsize $2$}}
    \put(-32,0){\HVCenter{$1$}}
    \put(32,0){\HVCenter{$3$}}
    \put(0,44){\HVCenter{$2$}}
  }
\end{picture}
\end{center}
\caption{A $3$-cycle (see Lemma~\ref{l:3cyclesymmetry}).}
\label{f:3cyclesymmetry}
\end{figure}

\begin{proof}
Note that $r(1,2)=s_1s_2s_3s_2$, $r(2,3)=s_2s_3s_1s_3$ and
$r(3,1)=s_3s_1s_2s_1$.  \\
The equivalence of (a) and (b) follows from:
$$s_1s_3r(2,3)^2s_3s_1=s_1s_3s_2s_3s_1s_3s_2s_3=s_1s_2s_3s_2s_1s_2s_3s_2=r(1,2)^2$$
(using (R2)). Suppose that (a) and (b) hold.
Then, by (b), $s_3s_1s_3s_2s_3s_1=s_2s_3$. Hence, using (R2),
$$s_1s_3s_1(s_3s_1s_2s_1)^3s_1s_3s_1=(s_1s_3s_1s_3s_1s_2s_1s_1s_3s_1)^3=(s_3s_1s_3s_2s_3s_1)^3=(s_2s_3)^3=e,$$
and (c) follows.
\end{proof}

Note that it is not claimed that (c) is equivalent to
(a) and (b) (we do not know if this holds, but it seems to be unlikely).

\begin{example}
Lemma~\ref{l:ncyclesymmetry} implies that in Example~\ref{e:d4example},
we can replace the relations in (R3) with any fixed one, e.g.\
$$r(1,2)^2=(s_1s_2s_3s_4s_3s_2)^2=e.$$
\end{example}

\begin{lemma} \label{l:4cyclesymmetry}
Let $\Gamma$ be a diagram of finite type containing a chordless $4$-cycle $C$
as in Figure~\ref{f:4cyclesymmetry} (where we number the vertices $1,2,3,4$
for convenience).
Let $W$ be the group with generators $s_1,s_2,\ldots ,s_n$ subject
to relations (R1) and (R2) arising from the diagram $\Gamma$.
Then the following are equivalent:
\begin{enumerate}
\item[(a)] $r(1,2)^2=e$;
\item[(b)] $r(3,4)^2=e$;
\end{enumerate}
Furthermore, if one of the above holds, then the following both hold:
\begin{enumerate}
\item[(c)] $r(2,3)^3=e$;
\item[(d)] $r(4,1)^3=e$.
\end{enumerate}
\end{lemma}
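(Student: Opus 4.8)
The plan is to follow the template of the proof of Lemma~\ref{l:3cyclesymmetry}, carrying out all computations inside the subgroup generated by the four cycle vertices. Since the $4$-cycle of Figure~\ref{f:4cyclesymmetry} has alternating edge-weights, I would number its vertices $1,2,3,4$ cyclically so that $\{1,2\}$ and $\{3,4\}$ are the weight-$1$ edges and $\{2,3\}$, $\{4,1\}$ the weight-$2$ edges. Then the relevant consequences of (R1), (R2) are $s_i^2=e$, $s_1s_2s_1=s_2s_1s_2$, $s_3s_4s_3=s_4s_3s_4$, $(s_2s_3)^4=(s_4s_1)^4=e$, $s_1s_3=s_3s_1$ and $s_2s_4=s_4s_2$, while the elements in question are $r(1,2)=s_1s_2s_3s_4s_3s_2$, $r(2,3)=s_2s_3s_4s_1s_4s_3$, $r(3,4)=s_3s_4s_1s_2s_1s_4$ and $r(4,1)=s_4s_1s_2s_3s_2s_1$.

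For the equivalence of (a) and (b) I would first put $r(1,2)$ and $r(3,4)$ into normal form: using $s_3s_4s_3=s_4s_3s_4$ and $s_2s_4=s_4s_2$ one gets $r(1,2)=s_1s_4s_2s_3s_2s_4$, and using $s_1s_2s_1=s_2s_1s_2$ and $s_2s_4=s_4s_2$ one gets $r(3,4)=s_3s_2s_4s_1s_4s_2$. A short direct check then shows $(s_3s_2s_4)\,r(1,2)\,(s_4s_2s_3)=r(3,4)$, so $r(1,2)$ and $r(3,4)$ are conjugate in $W$ and in particular have the same order; hence $r(1,2)^2=e$ if and only if $r(3,4)^2=e$. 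Geometrically this conjugation realises the rotation of the cycle by two steps, which interchanges $1\leftrightarrow3$ and $2\leftrightarrow4$; that relabelling also carries $r(2,3)$ to $r(4,1)$ and preserves every relation displayed above.

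For (c), assume (a) (hence (b)). I would first observe that, writing $v:=s_2s_4s_1s_4s_2$, one has $r(3,4)=s_3v$, so (b) is equivalent to $s_3$ and $v$ commuting, and that $r(2,3)=s_2s_3s_2\,v\,s_2s_3$. The idea is then to conjugate $r(2,3)$ by a suitable short word, substitute the commutation $s_3v=vs_3$ together with the weight-$1$ braid relation $s_1s_2s_1=s_2s_1s_2$, and collapse $r(2,3)^3$ to $(s_1s_2)^3$, which equals $e$ by (R2) for the edge $\{1,2\}$ --- this being exactly the edge of $C$ not used by the path defining $r(2,3)$, which explains why the relevant exponent is $3$. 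Statement (d) then follows by running the whole argument under the relabelling $1\leftrightarrow3$, $2\leftrightarrow4$: this fixes or exchanges all the relations used and the hypothesis (a)/(b), and it sends (c) to (d).

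The hard part will be the string computation in the (c)/(d) step. Unlike the simply-laced Lemma~\ref{l:3cyclesymmetry}, here two of the four edges have weight $2$, so there is no three-term braid move on $\{2,3\}$ or $\{4,1\}$; the reduction must be routed through the two weight-$1$ edges and the two commutations $s_1s_3=s_3s_1$, $s_2s_4=s_4s_2$, using the weight-$2$ relations only in the weak form $(s_2s_3)^2=(s_3s_2)^2$ and $(s_4s_1)^2=(s_1s_4)^2$. Pinning down the correct conjugating word and the correct place to insert $r(3,4)^2=e$ is the delicate point; once that is done, the rest is a finite, routine rewriting.
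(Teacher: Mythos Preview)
Your proposal is correct and follows essentially the same route as the paper: the paper uses the same conjugating element (written as $s_3s_4s_2$, equal to your $s_3s_2s_4$) for (a)$\Leftrightarrow$(b), and your commutation $s_3v=vs_3$ is exactly the paper's key identity $s_4s_1s_4s_2s_3s_2=s_2s_3s_2s_4s_1s_4$ after conjugation by $s_2$. The ``suitable short word'' you are seeking for step~(c) is simply $s_3$: one obtains $s_3\,r(2,3)\,s_3=s_2s_3s_4(s_1s_2)s_4s_3s_2$, so that $(s_3\,r(2,3)\,s_3)^3$ telescopes to $s_2s_3s_4(s_1s_2)^3s_4s_3s_2=e$.
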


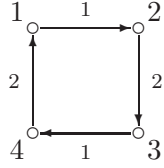
\begin{figure}[!ht]
\begin{center}
\begin{picture}(40,40)
  \put(20,20){
    \multiput(-20,-20)(40,0){2}{\circle{4}}
    \multiput(-20,20)(40,0){2}{\circle{4}}
    \put(-20,-17){\vector(0,1){34}}
    \put(-17,20){\vector(1,0){34}}
    \put(20,17){\vector(0,-1){34}}
    \put(17,-20){\vector(-1,0){34}}
    \put(0,27){\HVCenter{\scriptsize $1$}}
    \put(0,-27){\HVCenter{\scriptsize $1$}}
    \put(27,0){\HVCenter{\scriptsize $2$}}
    \put(-27,0){\HVCenter{\scriptsize $2$}}
    \put(-26,26){\HVCenter{$1$}}
    \put(26,26){\HVCenter{$2$}}
    \put(26,-26){\HVCenter{$3$}}
    \put(-26,-26){\HVCenter{$4$}}
  }
\end{picture}
\end{center}
\caption{A chordless $4$-cycle (see Lemma~\ref{l:4cyclesymmetry}).}
\label{f:4cyclesymmetry}
\end{figure}

\begin{proof}
Note first that $r(1,2)=s_1s_2s_3s_4s_3s_2$, 
$r(2,3)=s_2s_3s_4s_1s_4s_3$, 
$r(3,4)=s_3s_4s_1s_2s_1s_4$, and $r(4,1)=s_4s_1s_2s_3s_2s_1$.

The equivalence of (a) and (b) follows from:
$$s_3s_4s_2r(1,2)s_2s_4s_3=s_3s_4s_2s_1s_2s_3s_4s_3s_4s_3=s_3s_4s_2s_1s_2s_4=s_3s_4s_1s_2s_1s_4=r(3,4),$$
using (R2).
So suppose that (a) and (b) hold.
Applying (R2) to (a), we have:
$$(s_1s_2s_4s_3s_4s_2)^2=e.$$
Thus, noting that $s_2$ and $s_4$ commute, we have:
$$(s_1s_4s_2s_3s_2s_4)^2=e.$$
So we get $(s_4 s_1 s_4 s_2 s_3 s_2)^2=e$ and hence
\begin{equation} \label{e:r12}
s_4s_1s_4s_2s_3s_2=s_2s_3s_2s_4s_1s_4.
\end{equation}
Therefore, using first $s_3s_2s_3=s_2s_3s_2s_3s_2$ and then
equation~\eqref{e:r12}, we get
\begin{align*}
s_3r(2,3)s_3 &= s_3s_2s_3s_4s_1s_4\\
&= s_2s_3(s_2s_3s_2s_4s_1s_4)\\
&= s_2s_3(s_4s_1s_4s_2s_3s_2)\\
&= s_2s_3 s_4 (s_1s_2) s_4 s_3s_2,
\end{align*}
where in the last equation we used that $s_2$ and $s_4$ commute. Consequently
\begin{align*}
(s_3 r(2,3) s_3)^3
&= s_2 s_3 s_4 (s_1 s_2)^3 s_4 s_3 s_2 \\
&=e,
\end{align*}
since $(s_1 s_2)^3=e$. This shows that $r(2,3)^3=e$.

By a similar argument, we see that (b) implies (d), and we are done.
\end{proof}

The following now follows from Proposition~\ref{p:cycles} and
Lemmas~\ref{l:ncyclesymmetry}, \ref{l:3cyclesymmetry}
and~\ref{l:4cyclesymmetry}.

\begin{prop}
Let $\Gamma$ be a diagram of finite type. Then, in the presence of relations (R1) and
(R2), the relations (3) considered in the introduction imply the relations (R3) considered
above. In particular, $W_{\Gamma}$ is isomorphic to the group $W(\Gamma)$ defined in
the introduction.
\end{prop}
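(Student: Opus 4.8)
The plan is to note first that relations (1) and (2) of the introduction are literally (R1) and (R2), so the content of the proposition is that, modulo (R1) and (R2), the relations in (3) imply all of (R3)(a) and (R3)(b). By Proposition~\ref{p:cycles}, every chordless cycle of a finite-type diagram is, up to cyclic relabelling of its vertices, one of: (i) a cycle of arbitrary length all of whose edges have weight $1$; (ii) the $3$-cycle of Figure~\ref{f:3cyclesymmetry}, with weights $1,2,2$; or (iii) the $4$-cycle of Figure~\ref{f:4cyclesymmetry}, with weights $1,1,2,2$ and the two weight-$2$ edges non-adjacent. In particular no chordless cycle carries an edge of weight $3$, so the only exponents occurring in (R3) are $4-1=3$ and $4-2=2$. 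It therefore suffices to prove, for a cycle of each of these three shapes, that the instances of (3) attached to it, together with (R1) and (R2), yield every instance of (R3) attached to it.

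For a cycle with all weights $1$, (3) supplies $r(i_0,i_1)^2=e$ for (at least) one cyclic labelling, and Lemma~\ref{l:ncyclesymmetry} then propagates this to $r(i_a,i_{a+1})^2=e$ for every $a$, which is exactly (R3)(a) for that cycle. For the $3$-cycle with weights $1,2,2$, I would observe that the cyclic labellings for which (3) applies are precisely those in which the edge of $C$ not traversed by the path defining $r(i_0,i_1)$ has weight $2$; from Figure~\ref{f:3cyclesymmetry} one reads off that these labellings give $r(1,2)^2=e$ and $r(2,3)^2=e$, i.e.\ conditions (a) and (b) of Lemma~\ref{l:3cyclesymmetry}. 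That lemma then yields $r(3,1)^3=e$, and $r(1,2)^2=e$, $r(2,3)^2=e$, $r(3,1)^3=e$ are exactly the three instances of (R3)(b) attached to this cycle (the exponents being $4-2$, $4-2$, $4-1$). The $4$-cycle with weights $1,1,2,2$ is treated identically: (3) supplies conditions (a) and (b) of Lemma~\ref{l:4cyclesymmetry} (with the labelling of Figure~\ref{f:4cyclesymmetry}), whose conclusions (c) and (d) give $r(2,3)^3=e$ and $r(4,1)^3=e$, and these four relations are precisely the (R3)(b) relations attached to the cycle.

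Finally I would assemble the isomorphism. The defining relations of $W(\Gamma)$ are (1)$=$(R1), (2)$=$(R2) and (3), and (3) is a subset of (R3); hence every defining relation of $W(\Gamma)$ also holds in $W_\Gamma$, so $s_i\mapsto s_i$ defines a surjection $W(\Gamma)\to W_\Gamma$. Conversely, the argument above shows that every relation (R1), (R2), (R3) holds in $W(\Gamma)$, so $s_i\mapsto s_i$ defines a homomorphism $W_\Gamma\to W(\Gamma)$ as well, and the two are mutually inverse. The only step requiring genuine care is the bookkeeping in cases (ii) and (iii): one must check that the labellings allowed by the condition ``$w_0=2$'' in (3) correspond exactly to the hypotheses of Lemmas~\ref{l:3cyclesymmetry} and~\ref{l:4cyclesymmetry}, and that these hypotheses together with the lemmas' conclusions exhaust (R3)(b) for the cycle in question; everything else is a direct appeal to the cyclic-symmetry lemmas and Proposition~\ref{p:cycles}.
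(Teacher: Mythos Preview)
Your proposal is correct and follows exactly the route the paper intends: the paper's proof consists of the single sentence that the proposition follows from Proposition~\ref{p:cycles} and Lemmas~\ref{l:ncyclesymmetry}, \ref{l:3cyclesymmetry} and~\ref{l:4cyclesymmetry}, and your argument is a careful unpacking of precisely that. Your bookkeeping in cases (ii) and (iii) matching the $w_0=2$ labellings to the hypotheses of the two lemmas is accurate, and the observation that (3) is literally a subset of (R3) to get the surjection $W(\Gamma)\to W_\Gamma$ is correct.
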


Finally, in this section, we consider the behaviour of the relations
under reversing all of the arrows in $\Gamma$.

\begin{prop} \label{p:opposite}
Let $\Gamma$ be a diagram of finite type and let $W$ be a group with
generators $s_1,s_2,\ldots ,s_n$. Then the $s_i$ satisfy relations
(R1), (R2) and (R3) with respect to $\Gamma$ if and only if they satisfy
relations (R1), (R2) and (R3) with respect to $\Gamma^{\op}$.
\end{prop}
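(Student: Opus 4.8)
The plan is to reduce everything to two observations: first, that $\Gamma^{\op}$ is again a diagram of finite type (this is clear, since $\mu_k$ commutes with $B\mapsto -B^T$ on the level of matrices, or because $\Gamma^{\op}$ is the diagram of $-B$ which is again $2$-finite), and second, that reversing all arrows only permutes the set of relations of each type rather than changing them. Relations (R1) and (R2) are immediate: they make no reference to orientation at all, since the function $m_{ij}$ depends only on the weight of the (unoriented) edge between $i$ and $j$, and weights are preserved under $\Gamma\mapsto\Gamma^{\op}$. So the only content is in (R3). By the Proposition just proved (the one following Lemma~\ref{l:4cyclesymmetry}), we may work with the relations (3) of the introduction instead of (R3), which simplifies the bookkeeping; but in fact it is cleaner to argue directly with (R3).

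The key step is the following. Let $C$ be a chordless cycle in $\Gamma$, written $i_0\to i_1\to\cdots\to i_{d-1}\to i_0$. By Proposition~\ref{p:cycles}, $C$ is cyclically oriented, so in $\Gamma^{\op}$ the \emph{same} vertex set carries the reversed cycle, which read off in the clockwise direction is $i_0\to i_{d-1}\to i_{d-2}\to\cdots\to i_1\to i_0$. Thus the chordless cycles of $\Gamma^{\op}$ are exactly the chordless cycles of $\Gamma$ with the cyclic order reversed, and the edge weights are unchanged. Now I claim that the generator $r(i_a,i_{a+1})$ associated to $C$ in $\Gamma$ equals, possibly after conjugation by a fixed element and inversion, one of the generators $r^{\op}(\,\cdot\,,\cdot\,)$ associated to the reversed cycle in $\Gamma^{\op}$. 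Concretely, writing $r(i_a,i_{a+1})=s_{i_a}s_{i_{a+1}}\cdots s_{i_{a+d-1}}s_{i_{a+d-2}}\cdots s_{i_{a+1}}$ as a palindrome-like word of length $2d-3$ about the central letter $s_{i_{a+d-1}}$, one checks that its reverse word (read right-to-left) is precisely the element attached, in the opposite cyclic ordering, to an appropriate consecutive pair of vertices. Since for any word $w$ in involutions $s_i$ we have $(w^{-1})^k=e\iff w^k=e$, and $w^{-1}$ is just $w$ read backwards when every letter is an involution, the relation $r(i_a,i_{a+1})^k=e$ in $\Gamma$ is literally the same relation (on the same generators of $W$) as the corresponding relation $r^{\op}(\,\cdot\,,\cdot\,)^k=e$ in $\Gamma^{\op}$. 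One must also check that the exponent $k$ matches: in case (R3)(a) all weights are $1$ and $k=2$ on both sides; in case (R3)(b), $k=4-w_a$ where $w_a$ is the weight of the unique edge of $C$ not traversed by the path $r(i_a,i_{a+1})$, and under reversal this "missing edge" is still the unique edge not traversed by the reversed path, with the same weight, so $k$ is unchanged.

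Putting these together: the set of relations (R1)+(R2)+(R3) imposed by $\Gamma$ and the set imposed by $\Gamma^{\op}$ are, as subsets of the free group on $s_1,\dots,s_n$, literally equal (after the trivial re-indexing of which vertex pair labels a given cycle relation, and using that $w^k=e\iff (w\text{ reversed})^k=e$ for words in involutions). Hence a group $W$ with generators $s_1,\dots,s_n$ satisfies one set if and only if it satisfies the other, which is exactly the statement. The main obstacle — really the only thing requiring care rather than routine verification — is the word-reversal identity for $r(i_a,i_{a+1})$: one needs to check that reversing the explicit word $s_{i_a}s_{i_{a+1}}\cdots s_{i_{a+d-1}}s_{i_{a+d-2}}\cdots s_{i_{a+1}}$ and relabelling via $j\mapsto -j$ (indices mod $d$, reversing the cyclic order) really produces a word of the prescribed form $s_{j_b}s_{j_{b+1}}\cdots s_{j_{b+d-1}}\cdots s_{j_{b+1}}$ for the opposite cycle, with the central letter correctly placed. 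This is a short combinatorial check on the pattern of subscripts, and once it is in hand the proposition is immediate; note in particular that (R2) with the braid reformulation of Remark~\ref{r:braid} is manifestly orientation-independent, so no work is needed there.
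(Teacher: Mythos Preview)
There is a genuine gap. Your central claim---that the word $r(i_a,i_{a+1})$ read right-to-left is literally an element $r^{\op}(j_b,j_{b+1})$ for the reversed cycle---is false, and the ``short combinatorial check on the pattern of subscripts'' you defer would reveal this. The subscript pattern of $r(i_a,i_{a+1})$ is
\[
a,\;a+1,\;\ldots,\;a+d-1,\;a+d-2,\;\ldots,\;a+1,
\]
a monotone run of length $d$ followed by one of length $d-2$. Reversing gives the pattern $a+1,\ldots,a+d-1,a+d-2,\ldots,a$, two runs of length $d-1$ each. No element $r^{\op}(j_b,j_{b+1})$ has this shape: those again have runs of lengths $d$ and $d-2$. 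Concretely, for the $3$-cycle $0\to1\to2\to0$ in $\Gamma$ one has $r(0,1)=s_0s_1s_2s_1$, whose reverse is $s_1s_2s_1s_0$; but for the opposite cycle $0\to2\to1\to0$ the three $r^{\op}$-elements are $s_0s_2s_1s_2$, $s_2s_1s_0s_1$, and $s_1s_0s_2s_0$, none of which agree. So the two families of (R3)-relations are \emph{not} literally equal as subsets of the free group, contrary to your conclusion. (You hedge earlier with ``possibly after conjugation'', but the argument you actually sketch, and the ``literally equal'' summary, both rely on equality as words.)

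The fix is to replace word-reversal by an explicit conjugacy using (R1) (and, for one of the weighted cycles, (R2)). This is what the paper does: for the all-weight-$1$ cycle it shows, by inserting and cancelling a tail via $s_i^2=e$, that
\[
r^{\op}(d-1,d-2)=g\,r(0,1)\,g^{-1},\qquad g=s_{d-1}s_{d-2}\cdots s_1,
\]
so $r^{\op}(d-1,d-2)^2=e$ follows from $r(0,1)^2=e$; then Lemmas~\ref{l:ncyclesymmetry}--\ref{l:4cyclesymmetry} reduce to one relation per cycle. For the weighted $3$- and $4$-cycles it gives analogous short conjugacy computations, the $4$-cycle case requiring the braid relations (R2) as well. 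Your overall strategy (reduce to (R3), match cycles of $\Gamma$ with cycles of $\Gamma^{\op}$, match exponents via the ``missing edge'') is sound, but the mechanism you propose for transporting a single cycle relation across the orientation reversal does not work as stated.
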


\begin{proof}
We assume that the $s_i$ satisfy relations (R1), (R2) and (R3) with respect
to $\Gamma$, and show that they satisfy relations (R1), (R2) and (R3) with
respect to $\Gamma^{\op}$. The converse follows by replacing $\Gamma$ with
$\Gamma^{\op}$. Since (R1) and (R2) do not depend on the orientation of $\Gamma$,
it follows that the $s_i$ satisfy relations (R1) and (R2) with respect to
$\Gamma^{\op}$. In order to show (R3), we note that 
by Lemmas~\ref{l:ncyclesymmetry},~\ref{l:3cyclesymmetry}
and~\ref{l:4cyclesymmetry}, we only have to check one of the relations
of the form $r(i_a,i_a+1)^2=e$ arising from a cycle in order to be sure
that all of the relations corresponding to that cycle hold.

Suppose first that $\Gamma$ contains a chordless cycle $C$:
$$i_0\rightarrow i_1\rightarrow \cdots \rightarrow i_{d-1}\rightarrow i_0$$
with the weights of all of its edges equal to $1$.
We renumber the vertices $i_j$, $j=0,1\ldots ,d-1$ so that they become
$0,1,\ldots ,d-1$. Then, we have:
\begin{align*}
r(d-1,d-2) &= s_{d-1}s_{d-2}\cdots s_1s_0s_1 \cdots s_{d-3}s_{d-2} \\
&= s_{d-1}s_{d-2}\cdots s_1(s_0s_1 \cdots s_{d-3}s_{d-2}s_{d-1}s_{d-2}\cdots s_1)s_1s_2 \cdots s_{d-1} \\
&= s_{d-1}s_{d-2}\cdots s_1 r(0,1) s_1s_2 \cdots s_{d-1}.
\end{align*}
So $r(d-1,d-2)^2=e$.

Suppose next that $\Gamma$ contains a chordless $3$-cycle $C$
as in Figure~\ref{f:3cyclesymmetry} (where we number the vertices
$1,2,3$ for convenience). Then we have
$$s_2s_1r(1,2)s_1s_2=s_2s_1s_1s_2s_3s_2s_1s_2=s_3s_2s_1s_2=r(3,2),$$
so $r(3,2)^2=e$.

Finally, suppose that $\Gamma$ contains a chordless $4$-cycle $C$
as in Figure~\ref{f:4cyclesymmetry} (where we number the vertices $1,2,3,4$
for convenience). Then, using (R2), we have:
\begin{align*}
r(2,1) &= s_2s_1s_4s_3s_4s_1 \\
&= s_1(s_1s_2s_1)(s_4s_3s_4)s_1 \\
&= s_1(s_2s_1s_2)(s_3s_4s_3)
s_2s_2s_1  \\
&= (s_1s_2)(s_1s_2s_3s_4s_3s_2)(s_2s_1) \\
&= (s_1s_2)r(1,2)(s_2s_1);
\end{align*}
hence $r(2,1)^2=e$.

Since every chordless cycle in $\Gamma^{\op}$ corresponds to a chordless
cycle in $\Gamma$, the result follows from Proposition~\ref{p:cycles}.
\end{proof}

\begin{remark}
Proposition~\ref{p:opposite} shows that $W_{\Gamma}$ could be defined
using only the underlying unoriented weighted graph of $\Gamma$, by taking the
relations (R1), (R2) and (R3) corresponding to both $\Gamma$ and
$\Gamma^{\op}$ as the defining relations.
\end{remark}

\section{Invariance of the group under mutation}

In this section we prove the main result, that the group of a diagram
of finite type depends only on the Dynkin type, up to isomorphism. We
will prove this by showing that the group of the diagram is invariant
(up to isomorphism) under mutation.

We fix a diagram $\Gamma$ of finite type and a vertex $k$ of $\Gamma$.
Then we have the mutation $\Gamma'=\mu_k(\Gamma)$.

For a vertex $i$ of $\Gamma$, we define an element $t_i\in W_{\Gamma}$
as follows:
$$t_i=\begin{cases}
s_ks_is_k & \text{if there is an arrow $i\rightarrow k$ in $\Gamma$ (possibly weighted);} \\
s_i & \text{otherwise.}
\end{cases}
$$

Our aim is to show that the elements $t_i$ satisfy the relations
(R1)-(R3) defining $W_{\Gamma'}$. We denote the values of $m_{ij}$ for
$\Gamma'$ by $m'_{ij}$.
Note that (R1) is clear, since $(s_ks_is_k)^2=s_ks_i^2s_k=e$, so it
remains to check (R2) and (R3). We will use the previous section to
reduce the number of checks that have to be done.

We first deal with some easy cases:

\begin{lemma} \label{l:easycases}
Let $i,j$ be distinct vertices of $\Gamma$.
\begin{enumerate}
\item[(a)] If $i=k$ or $j=k$ then $(t_it_j)^{m'_{ij}}=e$.
\item[(b)] If at most one of $i,j$ is connected to $k$ in
$\Gamma$ (or, equivalently, in $\Gamma'$), then $(t_it_j)^{m'_{ij}}=e.$
\end{enumerate}
\end{lemma}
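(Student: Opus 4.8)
The plan is to exploit the fact that mutation at $k$ has only a very limited effect on the portion of the diagram relevant to these two cases, and then to unwind the definition of $t_i$ using nothing more than relations (R1) and (R2) in $W_\Gamma$.

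First I would record two consequences of Proposition~\ref{p:diagrammutation}. Since $\mu_k$ merely reverses the orientations of the edges incident with $k$ (part (a)) and otherwise only alters edges joining a pair of vertices that are \emph{both} connected to $k$ (part (b)), one obtains: a vertex is connected to $k$ in $\Gamma$ if and only if it is connected to $k$ in $\Gamma'$; and if at most one of $i,j$ is connected to $k$, then the edge between $i$ and $j$ (if any), together with its weight, is the same in $\Gamma$ and in $\Gamma'$, so $m'_{ij}=m_{ij}$. Likewise the weight of the edge between $i$ and $k$ is unchanged by $\mu_k$, so $m'_{ik}=m_{ik}$. These identifications are what let me replace every $m'$ by the corresponding $m$ and argue entirely inside $W_\Gamma$; they also give the parenthetical equivalence in the statement.

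For (a) I would first use the symmetry of (R2) in the presence of (R1) (all the $t_i$ are involutions) to reduce to $j=k$, so that $t_k=s_k$. If there is no arrow $i\to k$ in $\Gamma$ then $t_i=s_i$ and $(t_it_k)^{m'_{ik}}=(s_is_k)^{m_{ik}}=e$ directly from (R2). If instead $i\to k$ in $\Gamma$, then $t_i=s_ks_is_k$, so $t_it_k=s_ks_is_ks_k=s_ks_i$ and again $(s_ks_i)^{m_{ik}}=e$ by (R2). For (b): if neither $i$ nor $j$ is connected to $k$ then $t_i=s_i$ and $t_j=s_j$, and the relation is immediate since $m'_{ij}=m_{ij}$. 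Otherwise exactly one of them, say $i$, is connected to $k$ (the other case being symmetric), so $t_j=s_j$; if the edge at $k$ is oriented $k\to i$ then $t_i=s_i$ and we conclude as before, while if it is oriented $i\to k$ then $t_i=s_ks_is_k$, and since $j$ is not connected to $k$, relation (R2) gives $s_ks_j=s_js_k$, whence $t_it_j=s_ks_is_ks_j=s_ks_is_js_k$ and $(t_it_j)^{m'_{ij}}=s_k(s_is_j)^{m_{ij}}s_k=e$.

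I do not expect a genuine obstacle in this lemma: it is the routine base case of the mutation argument. The only points needing a little care are keeping precise track of which edges and weights survive $\mu_k$ (so that $m'_{ij}$ may legitimately be replaced by $m_{ij}$) and the conjugation by $s_k$ in the last case, which relies on $s_k$ commuting with $s_j$. The real work — relation (R2) for two vertices both adjacent to $k$, and the cycle relations (R3) — is postponed to the lemmas that follow.
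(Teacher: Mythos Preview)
Your proof is correct and follows essentially the same route as the paper's: both reduce to conjugation by $s_k$ and appeal directly to (R1) and (R2), with the only case needing a calculation being the one where $t_i=s_ks_is_k$ and $s_k$ commutes with $s_j$. If anything, you are a bit more careful than the paper in spelling out why $m'_{ij}=m_{ij}$ in these situations and in justifying the parenthetical equivalence.
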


\begin{proof}
For (a), suppose first that $i=k$.
Note that $m'_{ij}=m_{ij}$. The only non-trivial case is
thus when there is an arrow from $j$ to $k$, so that $t_j=s_ks_js_k$.
But then $t_it_j=s_ks_ks_js_k=s_js_k$ and the result follows. The case
when $j=k$ is similar.
For (b), suppose first that $i$ is connected to $k$.
The only non-trivial case is when the edge between $i$ and $k$ is
oriented towards $k$, so that $t_i=s_ks_is_k$. Then
$t_it_j=s_ks_is_ks_j$ and the result follows from the fact that
$s_ks_j=s_js_k$ (since $j$ is not connected to $k$).
The case when $j$ is connected to $k$ is similar.
\end{proof}

\begin{prop} \label{p:tirelations}
The elements $t_i$, for $i$ a vertex of $\Gamma$, satisfy the relations
(R1)-(R3) defining $W_{\Gamma'}$.
\end{prop}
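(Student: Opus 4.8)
The plan is to verify relations (R1)–(R3) for the elements $t_i$ in $W_{\Gamma}$, with respect to the mutated diagram $\Gamma'=\mu_k(\Gamma)$, reducing as far as possible to the local-configuration lemmas already established. Relation (R1) is immediate, as noted. For (R2), Lemma~\ref{l:easycases} already disposes of every pair $i,j$ with at most one of them connected to $k$ (in particular all pairs involving $k$ itself). So the remaining work for (R2) is exactly the case where $i$ and $j$ are \emph{both} connected to $k$ in $\Gamma$: then by Corollary~\ref{c:localmutation} the induced subdiagram on $\{i,j,k\}$ is one of the finitely many configurations in Figure~\ref{f:localmutation}, and one checks case by case (using the explicit form $t_i\in\{s_i,s_ks_is_k\}$ and the Coxeter relations (R1), (R2) of $\Gamma$) that $(t_it_j)^{m'_{ij}}=e$, where $m'_{ij}$ is read off from the mutated side of the relevant picture. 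This is a bounded computation — six configurations, each amounting to manipulating a word of length $\le 6$ in three involutions — so it is routine rather than deep.

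The substantive part is (R3). Here I would invoke Lemma~\ref{l:newcycles}: every chordless cycle $C'$ of $\Gamma'$ arises from one of the listed induced subdiagrams (a)–(l) of $\Gamma$. By Lemmas~\ref{l:ncyclesymmetry}, \ref{l:3cyclesymmetry} and~\ref{l:4cyclesymmetry} — which apply since, by the (R2) verification just completed, the $t_i$ already satisfy (R1) and (R2) for $\Gamma'$ — it suffices to check a \emph{single} relation of the form $r_{\Gamma'}(i_a,i_{a+1})^2=e$ (the word formed using the $t$'s along $C'$) for each such cycle. The cases split naturally: (k) and (l), where $C'$ is (essentially) a cycle of $\Gamma$ untouched or barely touched by $k$, so the defining relation (R3) for $\Gamma$ transfers with at most a conjugation by $s_k$; cases (a)–(f) and (j), where $k$ is a vertex of $C'$, so the cycle $C'$ in $\Gamma'$ comes from a shorter cycle (or path configuration) in $\Gamma$ and one rewrites $r_{\Gamma'}$ by substituting $t_i=s_ks_is_k$ for the neighbours of $k$ and collapsing the resulting $s_k$'s; cases (g), (h) and (i), where $k$ is outside $C'$ but joined to two adjacent vertices of it — here the relevant cycle of $\Gamma$ containing those two vertices plus $k$ is one of the finitely many in Figure~\ref{f:cycles}, and one uses the (R3) relation for \emph{that} cycle of $\Gamma$, together with (R1),(R2), to produce the required relation for $C'$. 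In each case the point is to exhibit the target word $r_{\Gamma'}(i_a,i_{a+1})^2$ as a conjugate (by a short word in the $s_i$) of a known relator of $W_{\Gamma}$, in the same spirit as the manipulations in Lemmas~\ref{l:3cyclesymmetry}–\ref{l:4cyclesymmetry} and Proposition~\ref{p:opposite}.

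**The main obstacle** I expect is the bookkeeping in cases (g), (h), (i) — where the chordless cycle $C'$ of $\Gamma'$ is genuinely produced by mutation and is strictly longer or differently weighted than anything in $\Gamma$, so one must carefully track which edge of the ``big'' cycle $C_k\cup C'$ of $\Gamma$ is omitted in the relator word and how the weights (hence the exponents $k=4-w_a$ in (R3)(b)) change under $\mu_k$. The conjugating element will be a word in $s_k$ and the two cycle-vertices adjacent to $k$, and verifying that conjugation by it carries the $\Gamma$-relator to the $\Gamma'$-relator requires repeated use of the braid form of (R2) from Remark~\ref{r:braid}; keeping the orientations straight (and invoking Proposition~\ref{p:opposite} to normalize them) is where care is needed. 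Everything else — the (R2) check and cases (a)–(f), (j), (k), (l) of (R3) — is a finite, mechanical verification against the figures. Since only finitely many local pictures occur and each reduces to a short word computation, the proof is complete once all these cases are checked; combined with Lemma~\ref{l:easycases}, this establishes Proposition~\ref{p:tirelations}.
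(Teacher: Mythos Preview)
Your proposal is correct and follows essentially the same approach as the paper: after Lemma~\ref{l:easycases}, the paper checks (R2) for pairs $i,j$ both connected to $k$ by running through the configurations of Corollary~\ref{c:localmutation} (in both directions), and then verifies (R3) by enumerating the chordless cycles of $\Gamma'$ via Lemma~\ref{l:newcycles}, reducing each to a single relation via Lemmas~\ref{l:ncyclesymmetry}--\ref{l:4cyclesymmetry} and using Proposition~\ref{p:opposite} to halve the work (e.g.\ deducing case (e) from (d), and (f), (h) from (e), (g)). The only difference is that the paper actually carries out the explicit word computations you describe as routine --- and they are indeed short, including your anticipated ``obstacle'' cases (g), (h), (i), each of which reduces in two or three lines to a known relator of $W_\Gamma$ after a single conjugation.
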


\begin{proof}
After Lemma~\ref{l:easycases}, the remaining relations to check are
the relations from (R2), that $(t_it_j)^{m'_{ij}}=e$ when both $i$ and $j$
are connected to $k$, and the relations (R3) corresponding to the
chordless cycles in $\Gamma'$.
For the former, we need to check that, in going (i) from left to right
or (ii) from right to left in Corollary~\ref{c:localmutation}
(in each case, from $\Gamma$ to $\Gamma'$) we have that $(t_it_j)^{m'_{ij}}=e$.
(Note that it is enough to check this for the given labelling of $i,j$,
since the relation for the pair $(t_j,t_i)$ follows from the relation
for $(t_i,t_j)$.)
At the same time, we will check that the cycle relations in (R3) hold
if any cycles are produced; we shall denote by $r'(i,j)$, etc., the
elements in the cycle relations for $W_{\Gamma'}$.
Note that by Lemmas~\ref{l:ncyclesymmetry},~\ref{l:3cyclesymmetry}
and~\ref{l:4cyclesymmetry}, we only have to check one of the relations
of the form $r(i_a,i_a+1)^2=e$ arising from a cycle in order to be sure
that all of the relations corresponding to that cycle hold.

\begin{list}{}{\leftmargin=40pt\labelsep=0pt\labelwidth=35pt}
\item[(a)(i)\hfill] We have $(t_it_j)^2=(s_ks_is_ks_ks_js_k)^2=s_k(s_is_j)^2s_k=e$.
\item[(a)(ii)\hfill] We have $(t_it_j)^2=(s_is_j)^2=e$.
\item[(b)(i)\hfill] We have: $t_it_j= s_ks_is_ks_j= s_is_ks_is_j= s_i s_k s_j s_i$ and therefore
$s_i(t_it_j)^3 s_i= (s_k s_j)^3=e$.
And $r'(j,k)^2=(t_jt_kt_it_k)^2=(s_js_ks_ks_is_ks_k)^2=(s_js_i)^2=e$.
\item[(b)(ii)\hfill] We have
$(t_it_j)^2=(s_is_ks_js_k)^2=r(i,k)^2=e$.
\item[(c)(i)\hfill] We have $(t_it_j)^2=(s_ks_is_ks_ks_js_k)^2=s_k(s_is_j)^2s_k=e$.
\item[(c)(ii)\hfill] We have $(t_it_j)^2=(s_is_j)^2=e$.
\item[(d)(i)\hfill]
We have $s_k (t_it_j)^4 s_k = s_k (s_k s_i s_k s_j)^4 s_k = (s_i s_k s_j s_k)^4=(s_i s_j s_k s_j)^4$
and therefore
$s_j s_k (t_i t_j)^4 s_k s_j = (s_j s_i s_j s_k)^4=(s_i s_k)^4=e$.
We also have $r'(j,k)=(t_j t_k t_i t_k)^2 =(s_j s_k s_k s_i s_k s_k)^2=(s_j s_i)^2=e$.
%We have:
%$s_js_k(t_it_j)^4s_ks_j=(s_js_ks_ks_is_ks_js_ks_j)^4=(s_js_is_js_k)^4=(s_is_k)^4=e$.
\item[(d)(ii)\hfill] We have:
$(t_it_j)^2=(s_is_ks_js_k)^2=r(i,k)^2=e$.
\item[(e)\hfill]
Since the $s_i$ satisfy relations (R1), (R2) and (R3) with respect to
$\Gamma$, they also satisfy these relations with respect to $\Gamma^{\op}$,
by Proposition~\ref{p:opposite}.
For each vertex $i$ of $\Gamma$, let $u_i=s_kt_is_k$.
Then it is easy to check that we have:
\begin{equation*}
u_i=\begin{cases} s_ks_is_k & \text{if }k\rightarrow i; \\ s_i & \text{else}. \end{cases}.
\end{equation*}
If the induced subgraph on vertices $i,j,k$ is as in case (e) for $\Gamma$,
then it is as in case (d) for $\Gamma^{\op}$. Applying case (d) to the $u_i$
we see that the $u_i$ satisfy (R1),(R2) and
(R3) for $\mu_k(\Gamma^{\op})=(\mu_k(\Gamma))^{\op}$, and hence so do the
$t_i$, by the definition of the $u_i$. Thus case (e) follows from case (d).
\item[(f)(i)\hfill]
We have:
$s_j(t_it_j)^3s_j = s_j(s_ks_is_ks_j)^3s_j
= (s_js_ks_is_k)^3
= r(j,k)^3=e$.
And
$s_k r'(i,j)^2 s_k = s_k (t_it_jt_kt_j)^2 s_k
= s_k(s_k s_i s_k s_j s_k s_j)^2 s_k
= (s_i s_k s_j s_k s_j s_k)^2
= (s_is_js_ks_j)^2=r(i,j)^2=e$.
\item[(f)(ii)\hfill]
The argument is the same as for (f)(i) by symmetry.
\end{list}

Finally, we need to check that the elements $t_i$ satisfy the cycle relations
(R3) for all the chordless cycles in $\Gamma'$.
Note that Lemma~\ref{l:newcycles}
describes how the chordless cycles
in $\Gamma'$ arise from induced subdiagrams of
$\Gamma$. In each case, we need to check that the corresponding cycle
relations hold (using Lemmas~\ref{l:ncyclesymmetry},~\ref{l:3cyclesymmetry}
and~\ref{l:4cyclesymmetry} to reduce the work).
Note that, in the above, we have already checked
the cases (a)--(d) in Lemma~\ref{l:newcycles}. We now check the remaining cases.
We also note that, arguing as in case (e) above
we see that we only have to check cases (e), (g), (i), (j), (k) and
(l) in Lemma~\ref{l:newcycles}.
We choose a labelling of the vertices in each case.

\begin{list}{}{\leftmargin=40pt\labelsep=0pt\labelwidth=35pt}
\item[(e)\hfill] \
We label the vertices as follows.
\begin{center}
\begin{picture}(200,60)
  \put(40,10){
    \put(140,18){
      \put(-18,-18){\circle{4}}
      \put(18,-18){\circle{4}}
      \put(-18,18){\circle*{4}}
      \put(18,18){\circle{4}}
      \put(-22,22){\RBCenter{$k=1$}}
      \put(22,22){$2$}
      \put(22,-22){\LTCenter{$3$}}
      \put(-22,-22){\RTCenter{$4$}}
      \put(-15,18){\vector(1,0){30}}
      \put(-18,-15){\vector(0,1){30}}
      \put(18,15){\vector(0,-1){30}}
      \put(15,-18){\vector(-1,0){30}}
      \put(-23,0){\HVCenter{\scriptsize $2$}}
      \put(23,0){\HVCenter{\scriptsize $2$}}
      \put(0,21){\HBCenter{\scriptsize $1$}}
      \put(0,-21){\HTCenter{\scriptsize $1$}}
      \put(0,0){\HVCenter{$C'$}}
    }
    \put(70,18){
      \put(0,0){\HVCenter{$\longrightarrow$}}
      \put(0,6){\HBCenter{$\mu_1$}}
    }
    \put(0,18){
      \put(-18,-18){\circle{4}}
      \put(18,-18){\circle{4}}
      \put(-18,18){\circle*{4}}
      \put(18,18){\circle{4}}
      \put(-22,22){\RBCenter{$k=1$}}
      \put(22,22){$2$}
      \put(22,-22){\LTCenter{$3$}}
      \put(-22,-22){\RTCenter{$4$}}
      \put(15,18){\vector(-1,0){30}}
      \put(-18,15){\vector(0,-1){30}}
      \put(18,15){\vector(0,-1){30}}
      \put(15,-18){\vector(-1,0){30}}
      \put(-18,-18){\put(2,2){\vector(1,1){32}}}
      \put(-23,0){\HVCenter{\scriptsize $2$}}
      \put(23,0){\HVCenter{\scriptsize $2$}}
      \put(0,21){\HBCenter{\scriptsize $1$}}
      \put(0,-21){\HTCenter{\scriptsize $1$}}
      \put(-4,4){\HVCenter{\scriptsize $2$}}
    }
  }
\end{picture}
\end{center}
\sloppy
We have
$r'(1,2)= t_1t_2t_3t_4t_3t_2=s_1 s_1 s_2 s_1 s_3 s_4 s_3 s_1 s_2 s_1$ and therefore
$s_1 s_2 s_1 r'(1,2) s_1 s_2 s_1= (s_1 s_2 s_1 s_2 s_1) s_3 s_4 s_3= s_2 s_3 s_4 s_3=r(2,3).$
Hence $r(2,3)^2=e$ implies $r'(1,2)^2=e$.
\item[(g)\hfill] \
We label the vertices as follows.
\begin{center}
\begin{picture}(200,60)
  \put(40,10){
    \put(0,18){
      \put(-18,-18){\circle{4}}
      \put(18,-18){\circle{4}}
      \put(-18,18){\circle*{4}}
      \put(18,18){\circle{4}}
      \put(-22,22){\RBCenter{$k=1$}}
      \put(22,22){$2$}
      \put(22,-22){\LTCenter{$3$}}
      \put(-22,-22){\RTCenter{$4$}}
      \put(-15,18){\vector(1,0){30}}
      \put(-18,-15){\vector(0,1){30}}
      \put(18,15){\vector(0,-1){30}}
      \put(15,-18){\vector(-1,0){30}}
      \put(-23,0){\HVCenter{\scriptsize $2$}}
      \put(23,0){\HVCenter{\scriptsize $2$}}
      \put(0,21){\HBCenter{\scriptsize $1$}}
      \put(0,-21){\HTCenter{\scriptsize $1$}}
    }
    \put(70,18){
      \put(0,0){\HVCenter{$\longrightarrow$}}
      \put(0,6){\HBCenter{$\mu_1$}}
    }
    \put(140,18){
      \put(-18,-18){\circle{4}}
      \put(18,-18){\circle{4}}
      \put(-18,18){\circle*{4}}
      \put(18,18){\circle{4}}
      \put(-22,22){\RBCenter{$k=1$}}
      \put(22,22){$2$}
      \put(22,-22){\LTCenter{$3$}}
      \put(-22,-22){\RTCenter{$4$}}
      \put(15,18){\vector(-1,0){30}}
      \put(-18,15){\vector(0,-1){30}}
      \put(18,15){\vector(0,-1){30}}
      \put(15,-18){\vector(-1,0){30}}
      \put(-18,-18){\put(2,2){\vector(1,1){32}}}
      \put(-23,0){\HVCenter{\scriptsize $2$}}
      \put(23,0){\HVCenter{\scriptsize $2$}}
      \put(0,21){\HBCenter{\scriptsize $1$}}
      \put(0,-21){\HTCenter{\scriptsize $1$}}
      \put(-4,4){\HVCenter{\scriptsize $2$}}
      \put(8,-8){\HVCenter{$C'$}}
    }
  }
\end{picture}
\end{center}
We have
$r'(4,3)=t_4 t_3 t_2 t_3= s_1 s_4 s_1 s_3 s_2 s_3$.
Using that $s_1$ and $s_3$ commute we get
$s_1 r'(4,3) s_1=s_4 s_1 s_3 s_2 s_3 s_1=s_4 s_3 (s_1 s_2 s_1) s_3=s_4 s_3 s_2 s_1 s_2 s_3=r(4,3)$ and hence $r(4,3)^ 2=e$ implies that $r'(4,3)^ 2=e$.
\item[(i)\hfill] \
We use the following labelling of the vertices.
\begin{center}
\begin{picture}(346,65)
\put(56,12){
  \put(0,20){
    \multiput(0,-20)(0,40){2}{\circle{4}}
    \multiput(40,-20)(0,40){2}{\circle{4}}
    \multiput(80,-20)(0,40){2}{\circle{4}}
    \put(-30,0){\circle*{4}}
    \put(-38,0){\HVCenter{\small $k$}}
    \put(0,28){\HVCenter{\small $1$}}
    \put(40,28){\HVCenter{\small $2$}}
    \put(0,-28){\HVCenter{\small $h$}}
    \put(40,-28){\HVCenter{\small $h-1$}}
    \multiput(0,20)(40,0){2}{\put(3,0){\vector(1,0){34}}}
    \multiput(40,-20)(40,0){2}{\put(-3,0){\vector(-1,0){34}}}
    \put(0,-20){\put(-2.4,1.8){\vector(-3,2){26.2}}}
    \put(-30,0){\put(2.4,1.8){\vector(3,2){26.2}}}
    \put(80,20){\qbezier(3,0)(15,0)(15,-15)}
    \put(80,-20){\qbezier(3,0)(15,0)(15,15)\put(5,0){\vector(-1,0){2}}}
    \multiput(95,-3.5)(0,3){3}{\line(0,1){1}}
    \put(-19,14.3){\HVCenter{\scriptsize $1$}}
    \put(-19,-14.3){\HVCenter{\scriptsize $1$}}
    \multiput(20,26.3)(40,0){2}{\HVCenter{\scriptsize $1$}}
    \multiput(20,-26.3)(40,0){2}{\HVCenter{\scriptsize $1$}}
  }
  \put(130,20){
    \put(0,0){\HVCenter{$\longrightarrow$}}
    \put(0,7){\HVCenter{$\mu_k$}}
  }
  \put(195,20){
    \multiput(0,-20)(0,40){2}{\circle{4}}
    \multiput(40,-20)(0,40){2}{\circle{4}}
    \multiput(80,-20)(0,40){2}{\circle{4}}
    \put(-30,0){\circle*{4}}
    \put(-38,0){\HVCenter{\small $k$}}
    \put(0,28){\HVCenter{\small $1$}}
    \put(40,28){\HVCenter{\small $2$}}
    \put(0,-28){\HVCenter{\small $h$}}
    \put(40,-28){\HVCenter{\small $h-1$}}
    \multiput(0,20)(40,0){2}{\put(3,0){\vector(1,0){34}}}
    \multiput(40,-20)(40,0){2}{\put(-3,0){\vector(-1,0){34}}}
    \put(0,-17){\vector(0,1){34}}
    \put(0,20){\put(-2.4,-1.8){\vector(-3,-2){26.2}}}
    \put(-30,0){\put(2.4,-1.8){\vector(3,-2){26.2}}}
    \put(80,20){\qbezier(3,0)(15,0)(15,-15)}
    \put(80,-20){\qbezier(3,0)(15,0)(15,15)\put(5,0){\vector(-1,0){2}}}
    \multiput(95,-3.5)(0,3){3}{\line(0,1){1}}
    \put(-19,14.3){\HVCenter{\scriptsize $1$}}
    \put(-19,-14.3){\HVCenter{\scriptsize $1$}}
    \multiput(20,26.3)(40,0){2}{\HVCenter{\scriptsize $1$}}
    \multiput(20,-26.3)(40,0){2}{\HVCenter{\scriptsize $1$}}
    \put(6,0){\HVCenter{\scriptsize $1$}}
    \put(40,0){\HVCenter{$C'$}}
  }
}
\end{picture}
\end{center}

Note that $t_h = s_k s_h s_k$, while $t_i=s_i$ for all $i\not=h$.
\begin{align*}
r'(1,2)^2 &= (t_1t_2\cdots t_{h-1}t_h t_{h-1}\cdots t_2)^2 \\
&= (s_1s_2\cdots s_{h-1} s_k s_h s_k s_{h-1}\cdots s_2)^2 \\
&= (s_1s_2\cdots s_{h-1}s_h s_k s_h s_{h-1}\cdots s_2)^2 = r(1,2)^2 = e.
\end{align*}

\item[(j)\hfill] \
We label the vertices as follows.
\begin{center}
\begin{picture}(346,65)
\put(56,12){
  \put(195,20){
    \multiput(0,-20)(0,40){2}{\circle{4}}
    \multiput(40,-20)(0,40){2}{\circle{4}}
    \multiput(80,-20)(0,40){2}{\circle{4}}
    \put(-30,0){\circle*{4}}
    \put(-38,0){\HVCenter{\small $k$}}
    \put(0,28){\HVCenter{\small $1$}}
    \put(40,28){\HVCenter{\small $2$}}
    \put(0,-28){\HVCenter{\small $h$}}
    \put(40,-28){\HVCenter{\small $h-1$}}
    \multiput(0,20)(40,0){2}{\put(3,0){\vector(1,0){34}}}
    \multiput(40,-20)(40,0){2}{\put(-3,0){\vector(-1,0){34}}}
    \put(0,-20){\put(-2.4,1.8){\vector(-3,2){26.2}}}
    \put(-30,0){\put(2.4,1.8){\vector(3,2){26.2}}}
    \put(80,20){\qbezier(3,0)(15,0)(15,-15)}
    \put(80,-20){\qbezier(3,0)(15,0)(15,15)\put(5,0){\vector(-1,0){2}}}
    \multiput(95,-3.5)(0,3){3}{\line(0,1){1}}
    \put(-19,14.3){\HVCenter{\scriptsize $1$}}
    \put(-19,-14.3){\HVCenter{\scriptsize $1$}}
    \multiput(20,26.3)(40,0){2}{\HVCenter{\scriptsize $1$}}
    \multiput(20,-26.3)(40,0){2}{\HVCenter{\scriptsize $1$}}
    \put(40,0){\HVCenter{$C'$}}
  }
  \put(130,20){
    \put(0,0){\HVCenter{$\longrightarrow$}}
    \put(0,7){\HVCenter{$\mu_k$}}
  }
  \put(0,20){
    \multiput(0,-20)(0,40){2}{\circle{4}}
    \multiput(40,-20)(0,40){2}{\circle{4}}
    \multiput(80,-20)(0,40){2}{\circle{4}}
    \put(-30,0){\circle*{4}}
    \put(-38,0){\HVCenter{\small $k$}}
    \put(0,28){\HVCenter{\small $1$}}
    \put(40,28){\HVCenter{\small $2$}}
    \put(0,-28){\HVCenter{\small $h$}}
    \put(40,-28){\HVCenter{\small $h-1$}}
    \multiput(0,20)(40,0){2}{\put(3,0){\vector(1,0){34}}}
    \multiput(40,-20)(40,0){2}{\put(-3,0){\vector(-1,0){34}}}
    \put(0,-17){\vector(0,1){34}}
    \put(0,20){\put(-2.4,-1.8){\vector(-3,-2){26.2}}}
    \put(-30,0){\put(2.4,-1.8){\vector(3,-2){26.2}}}
    \put(80,20){\qbezier(3,0)(15,0)(15,-15)}
    \put(80,-20){\qbezier(3,0)(15,0)(15,15)\put(5,0){\vector(-1,0){2}}}
    \multiput(95,-3.5)(0,3){3}{\line(0,1){1}}
    \put(-19,14.3){\HVCenter{\scriptsize $1$}}
    \put(-19,-14.3){\HVCenter{\scriptsize $1$}}
    \multiput(20,26.3)(40,0){2}{\HVCenter{\scriptsize $1$}}
    \multiput(20,-26.3)(40,0){2}{\HVCenter{\scriptsize $1$}}
    \put(6,0){\HVCenter{\scriptsize $1$}}
  }
}
\end{picture}
\end{center}
Here we have $t_1=s_k s_1 s_k$ and $t_i=s_i$ for all $i\neq 1$.
\begin{align*}
r'(h,h-1)^2 &= (t_h t_{h-1} \cdots t_1 t_k t_1 \cdots t_{h-1})^2 \\
&= (s_h s_{h-1} \cdots s_{2} s_k s_1 (s_k s_k) s_k s_1 s_k s_{2}\cdots s_{h-1})^2
\\
&= (s_h s_{h-1} \cdots s_{2} (s_k s_1 s_k s_1 s_k) s_{2}\cdots s_{h-1})^2 \\
&= (s_h s_{h-1}\cdots s_{2} s_1 s_{2} \cdots s_{h-1})^2 = r(h,h-1)^ 2 = e.
\end{align*}

\item[(k)\hfill]
This is trivial.

\item[(l)\hfill]
Suppose that $k$ is connected to exactly one vertex of $C$ via an
edge of unspecified weight.
We label the vertices as follows.
\begin{center}
\begin{picture}(346,70)
\put(56,12){
  \put(0,20){
    \put(40,0){
    \multiput(0,-20)(0,40){2}{\circle{4}}
    \multiput(40,-20)(0,40){2}{\circle{4}}
    \put(-30,0){\circle{4}}
    \put(-70,0){\circle*{4}}
    \put(-35,6){\HVCenter{\small $h$}}
    \put(-78,0){\HVCenter{\small $k$}}
    \put(0,28){\HVCenter{\small $1$}}
    \put(40,28){\HVCenter{\small $2$}}
    \put(0,-28){\HVCenter{\small $h-1$}}
    \put(40,-28){\HVCenter{\small $h-2$}}
    \put(0,20){\put(3,0){\vector(1,0){34}}}
    \put(-70,0){\put(3,0){\vector(1,0){34}}}
    \put(40,-20){\put(-3,0){\vector(-1,0){34}}}
    \put(0,-20){\put(-2.4,1.8){\vector(-3,2){26.2}}}
    \put(-30,0){\put(2.4,1.8){\vector(3,2){26.2}}}
    \put(40,20){\qbezier(3,0)(15,0)(15,-15)}
    \put(40,-20){\qbezier(3,0)(15,0)(15,15)\put(5,0){\vector(-1,0){2}}}
    \multiput(55,-3.5)(0,3){3}{\line(0,1){1}}
    \put(-19,14.3){\HVCenter{\scriptsize $1$}}
    \put(-19,-14.3){\HVCenter{\scriptsize $1$}}
    \multiput(20,26.3)(0,-52.6){2}{\HVCenter{\scriptsize $1$}}
    \put(18,0){\HVCenter{$C'$}}
    }
  }
  \put(130,20){
    \put(0,0){\HVCenter{$\longrightarrow$}}
    \put(0,7){\HVCenter{$\mu_k$}}
  }
  \put(195,20){
    \put(40,0){
    \multiput(0,-20)(0,40){2}{\circle{4}}
    \multiput(40,-20)(0,40){2}{\circle{4}}
    \put(-30,0){\circle{4}}
    \put(-70,0){\circle*{4}}
    \put(-35,6){\HVCenter{\small $h$}}
    \put(-78,0){\HVCenter{\small $k$}}
    \put(0,28){\HVCenter{\small $1$}}
    \put(40,28){\HVCenter{\small $2$}}
    \put(0,-28){\HVCenter{\small $h-1$}}
    \put(40,-28){\HVCenter{\small $h-2$}}
    \put(0,20){\put(3,0){\vector(1,0){34}}}
    \put(-30,0){\put(-3,0){\vector(-1,0){34}}}
    \put(40,-20){\put(-3,0){\vector(-1,0){34}}}
    \put(0,-20){\put(-2.4,1.8){\vector(-3,2){26.2}}}
    \put(-30,0){\put(2.4,1.8){\vector(3,2){26.2}}}
    \put(40,20){\qbezier(3,0)(15,0)(15,-15)}
    \put(40,-20){\qbezier(3,0)(15,0)(15,15)\put(5,0){\vector(-1,0){2}}}
    \multiput(55,-3.5)(0,3){3}{\line(0,1){1}}
    \put(-19,14.3){\HVCenter{\scriptsize $1$}}
    \put(-19,-14.3){\HVCenter{\scriptsize $1$}}
     \multiput(20,26.3)(0,-52.6){2}{\HVCenter{\scriptsize $1$}}
    \put(18,0){\HVCenter{$C'$}}
    }
  }
}
\end{picture}
\end{center}
If the edge between $k$ and $h$ points towards $h$, then $t_h=s_h$ and
the relations for $C'$ are the same as the relations for $C$.
If the edge points towards $k$, then $t_h=s_ks_hs_k$ and we have:
\begin{align*}
r'(1,2)^2 &= (t_1t_2\cdots t_h t_{h-1}\cdots t_2)^2 \\
&= (s_1s_2\cdots s_{h-1}s_k s_h s_k s_{h-1}\cdots s_2)^2 \\
&= s_k(s_1s_2\cdots s_{h-1} s_h s_{h-1}\cdots s_2)^2 s_k \\
&= s_kr(1,2)^2s_k = e.
\end{align*}
\end{list}
This finishes the proof.
\end{proof}

Note that, in the simply-laced case, some arguments of this kind
have been used in the proof of~\cite[Thm.\ 6.10]{cst}. We explain more about the
connection with~\cite{cst} at the end of Section~\ref{s:companionbasis}.

Recall that we are in the situation where $\Gamma'=\mu_k(\Gamma)$.
We denote by $s'_i$ the defining generators of $W_{\Gamma'}$.

For each vertex $i$ of $\Gamma$ define the element $t'_i$ in
$W_{\Gamma'}$ as follows:
$$t'_i=\begin{cases}
s'_k s'_i s'_k &
\text{if there is an arrow $k\rightarrow i$ in $\Gamma'$
(of arbitrary weight);} \\
s'_i & \text{otherwise.}
\end{cases}
$$

\begin{prop} \label{p:titilderelations}
The elements $t'_i$, for $i$ a vertex of $\Gamma'$, satisfy the relations
(R1)-(R3) defining $W_{\Gamma}$.
\end{prop}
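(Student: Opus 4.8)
The plan is to deduce this from Proposition~\ref{p:tirelations} applied with the roles of $\Gamma$ and $\Gamma'$ interchanged, rather than by redoing all of the case checks. Since mutating twice at the same vertex returns the original seed, we have $\mu_k(\Gamma')=\Gamma$. Hence Proposition~\ref{p:tirelations}, applied to the diagram $\Gamma'$ and the vertex $k$, produces elements $u_i\in W_{\Gamma'}$, defined by $u_i=s'_ks'_is'_k$ if there is an arrow $i\rightarrow k$ in $\Gamma'$ and $u_i=s'_i$ otherwise, which satisfy the relations (R1)--(R3) defining $W_{\mu_k(\Gamma')}=W_{\Gamma}$.

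Next I would relate the $u_i$ to the $t'_i$, the claim being that $t'_i=s'_ku_is'_k$ for every vertex $i$. This is a short case analysis on the edge between $i$ and $k$ in $\Gamma'$. If $i\rightarrow k$ in $\Gamma'$, then $u_i=s'_ks'_is'_k$, so $s'_ku_is'_k=s'_i=t'_i$. If $k\rightarrow i$ in $\Gamma'$, then $u_i=s'_i$, so $s'_ku_is'_k=s'_ks'_is'_k=t'_i$. If $i$ is not connected to $k$ (in particular if $i=k$), then $u_i=s'_i$ and $s'_ks'_is'_k=s'_i$ by (R2) (using $m'_{ik}=2$, so $s'_i$ and $s'_k$ commute), which again equals $t'_i$. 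This is exactly the manoeuvre already used for case~(e) in the proof of Proposition~\ref{p:tirelations}.

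Finally, conjugation by the involution $s'_k$ is an automorphism of $W_{\Gamma'}$, so it carries any relation satisfied by the tuple $(u_1,\ldots,u_n)$ to the corresponding relation satisfied by $(s'_ku_1s'_k,\ldots,s'_ku_ns'_k)=(t'_1,\ldots,t'_n)$. Since the $u_i$ satisfy (R1)--(R3) for $W_{\Gamma}$ by the previous paragraph, so do the $t'_i$, which is the assertion. Combined with Proposition~\ref{p:tirelations}, this yields mutually inverse homomorphisms between $W_{\Gamma}$ and $W_{\Gamma'}$, hence the isomorphism $W_{\Gamma}\cong W_{\mu_k(\Gamma)}$ that Section~5 is aiming for.

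There is no serious obstacle here: the substantive work is all contained in Proposition~\ref{p:tirelations} (and behind it in Lemmas~\ref{l:ncyclesymmetry}--\ref{l:4cyclesymmetry} and~\ref{l:newcycles}). The only point needing care is the bookkeeping in the identity $t'_i=s'_ku_is'_k$ — keeping straight that $\mu_k$ reverses precisely the edges incident with $k$, so that the arrow-direction condition defining $t'_i$ relative to $\Gamma'$ is matched, after conjugation by $s'_k$, by the arrow-direction condition defining $u_i$ relative to $\Gamma'$.
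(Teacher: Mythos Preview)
Your argument is correct. Both you and the paper reduce to Proposition~\ref{p:tirelations} applied with $\Gamma$ and $\Gamma'$ swapped, but you handle the discrepancy between the arrow condition defining the $t'_i$ (arrow $k\rightarrow i$ in $\Gamma'$) and the one appearing in Proposition~\ref{p:tirelations} (arrow $i\rightarrow k$) differently. The paper invokes Proposition~\ref{p:opposite}: since $W_{\Gamma'}=W_{(\Gamma')^{\op}}$ and $\mu_k((\Gamma')^{\op})=\Gamma^{\op}$, one applies Proposition~\ref{p:tirelations} to $(\Gamma')^{\op}$ and then uses Proposition~\ref{p:opposite} once more to pass from the relations of $W_{\Gamma^{\op}}$ to those of $W_{\Gamma}$. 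You instead observe directly that $t'_i=s'_k u_i s'_k$ and that conjugation by an involution carries word relations to word relations. Your route avoids appealing to Proposition~\ref{p:opposite} altogether (at the cost of the small case check for $t'_i=s'_k u_i s'_k$), while the paper's route is a one-liner given that Proposition~\ref{p:opposite} is already available; both are clean and the difference is purely a matter of packaging.
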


\begin{proof}
This follows from Proposition~\ref{p:tirelations}, interchanging $\Gamma$
and $\Gamma'$ and using the fact that the definition of the group $W_{\Gamma}$
is unchanged under reversing the orientation of all the arrows in $\Gamma$
(see Proposition~\ref{p:opposite}).
\end{proof}

We can now prove the following:

\begin{theorem} \label{t:isomorphism}
\begin{enumerate}
\item[(a)] Let $\Gamma$ be a diagram of finite type and $\Gamma'=\mu_k(\Gamma)$ the
mutation of $\Gamma$ at a vertex $k$.
Then $W_{\Gamma}\cong W_{\Gamma'}$.
\item[(b)]
Let $\mathcal{A}$ be a cluster algebra of finite type. Then the groups
$W_{\Gamma}$ associated to the diagrams $\Gamma$ arising from the seeds of
$\mathcal{A}$ are all isomorphic (to the reflection group associated to the
Dynkin diagram associated to $\mathcal{A}$).
\end{enumerate}
\end{theorem}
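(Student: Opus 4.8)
The plan is as follows. For part~(a), the substantial work has already been carried out: Proposition~\ref{p:tirelations} shows that the elements $t_i\in W_\Gamma$ satisfy the defining relations (R1)--(R3) of $W_{\Gamma'}$, and Proposition~\ref{p:titilderelations} shows that the elements $t'_i\in W_{\Gamma'}$ satisfy the defining relations of $W_\Gamma$. By the universal property of a group presentation these yield homomorphisms
$$\varphi\colon W_{\Gamma'}\to W_\Gamma,\quad s'_i\mapsto t_i,\qquad \psi\colon W_\Gamma\to W_{\Gamma'},\quad s_i\mapsto t'_i.$$
It then remains to check that $\varphi$ and $\psi$ are mutually inverse, and since each is determined by its values on generators it suffices to verify $\psi(\varphi(s'_i))=s'_i$ and $\varphi(\psi(s_i))=s_i$ for every vertex $i$.

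For this I would argue by cases using Proposition~\ref{p:diagrammutation}(a), the point being that $\mu_k$ reverses exactly the edges incident with $k$ and alters nothing else about the edges at $k$. If $i$ is joined to $k$ by an arrow $i\to k$ in $\Gamma$, then $t_i=s_ks_is_k$, whereas in $\Gamma'$ the same edge has become $k\to i$, so $t'_i=s'_ks'_is'_k$; since there is no loop at $k$ we have $t'_k=s'_k$, and hence $\psi(\varphi(s'_i))=\psi(s_ks_is_k)=t'_kt'_it'_k=s'_k(s'_ks'_is'_k)s'_k=s'_i$. If instead $i$ is not joined to $k$, or is joined by an arrow $k\to i$ in $\Gamma$, then $t_i=s_i$ and in $\Gamma'$ there is no arrow $k\to i$, so $t'_i=s'_i$ and again $\psi(\varphi(s'_i))=s'_i$; finally $\psi(\varphi(s'_k))=\psi(s_k)=s'_k$. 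Thus $\psi\circ\varphi=\id_{W_{\Gamma'}}$, and the identity $\varphi\circ\psi=\id_{W_\Gamma}$ follows from the symmetric computation, using that $\mu_k(\Gamma')=\Gamma$ (Proposition~\ref{p:diagrammutation}). This establishes $W_\Gamma\cong W_{\Gamma'}$.

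For part~(b), every seed of $\mathcal{A}$ is obtained from the initial seed by a finite sequence of mutations, so by part~(a) and induction on the length of such a sequence all the groups $W_\Gamma$ attached to seeds of $\mathcal{A}$ are isomorphic to one another. To identify the common isomorphism type I would use \cite[1.4]{fominzelevinsky2}: since $\mathcal{A}$ is of finite type it contains a seed whose Cartan counterpart is the Cartan matrix of the associated Dynkin diagram $\Delta$. The diagram $\Gamma$ of that seed has underlying graph $\Delta$, which is a forest and therefore contains no chordless cycles; hence relations (R3) are vacuous and $W_\Gamma$ is presented by (R1) and (R2) alone, which is precisely the Coxeter presentation of the reflection group of type $\Delta$ (compare Remark~\ref{r:Dynkincase}). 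Consequently all the $W_\Gamma$ are isomorphic to that reflection group.

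The genuinely delicate step --- verifying that the $t_i$ respect the cycle relations (R3) --- is absorbed into Proposition~\ref{p:tirelations}, and the only care needed in the present argument is the tracking of arrow orientations under $\mu_k$ when showing that $\varphi$ and $\psi$ invert one another; this is why the case distinction above is organized according to whether the edge at $k$ points towards $k$ or away from it.
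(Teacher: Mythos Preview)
Your proof is correct and follows essentially the same approach as the paper: you invoke Propositions~\ref{p:tirelations} and~\ref{p:titilderelations} to obtain the homomorphisms $\varphi$ and $\psi$, verify on generators that they are mutually inverse via the same case split on the orientation of the edge at $k$, and then deduce part~(b) by induction together with Remark~\ref{r:Dynkincase}. The paper's proof is virtually identical, differing only in minor presentational details.
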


\begin{proof}
Denote the generators of $W_{\Gamma'}$
(as defined in Section~\ref{s:groupdefinition}) by $s'_i$, for $1\leq i\leq n$.
By Proposition~\ref{p:tirelations}, there is a group
homomorphism $\varphi:W_{\Gamma'}\rightarrow W_{\Gamma}$ such that
$\varphi(s'_i)=t_i$ for all $i$.
By Proposition~\ref{p:titilderelations}
there is a group homomorphism $\psi:W_{\Gamma}\rightarrow W_{\Gamma'}$ such that
$\psi(s_i)=t'_i$.
If there is no arrow $i\rightarrow k$ in $\Gamma$, then there is also no arrow
$k\rightarrow i$ in $\Gamma'$ and consequently
$$\psi(\varphi(s'_i))=\psi(t_i)=\psi(s_i)=t'_i=s'_i.$$
If there is an arrow $i\rightarrow k$ in $\Gamma$, then there is an arrow $k\rightarrow i$ in $\Gamma'$ and therefore
\begin{align*}
\psi(\varphi(s'_i)) &= \psi(t_i) \\
&= \psi(s_ks_is_k) \\
&= \psi(s_k)\psi(s_i)\psi(s_k) \\
&= t'_kt'_it'_k \\
&= s'_ks'_ks'_is'_ks'_k = s'_i.
\end{align*}
So $\psi \circ \varphi=\id_{W_{\Gamma'}}$, and, similarly,
$\varphi \circ \psi=\id_{W_{\Gamma}}$, showing that $\varphi$ and $\psi$ are
isomorphisms and (a) is proved. Part (b) follows from part (a) and
Remark~\ref{r:Dynkincase}.
\end{proof}

\section{Companion Bases} \label{s:companionbasis}
We first recall some results from~\cite{bgz}.
Recall that a matrix $A$ is said to be \emph{symmetrisable} if there
is diagonal matrix $D$, with positive entries on the diagonal, such that $DA$ is symmetric.
A symmetrisable matrix $A$ is said to be \emph{quasi-Cartan} if its
diagonal entries are all equal to $2$, and it is said to be \emph{positive} if
$DA$ is positive definite for some positive diagonal matrix $D$ (this does not
depend on the choice of $D$).
If $B$ is a skew-symmetrisable matrix, a \emph{quasi-Cartan companion}
of $B$ is a quasi-Cartan matrix $A$ such that $|A_{ij}|=|B_{ij}|$ for all
$i\neq j$.
Such matrices are used in~\cite{bgz} to give a characterisation of cluster
algebras of finite type:

\begin{theorem} \cite{bgz}
Let $B$ be a skew-symmetrisable matrix. Then the cluster algebra
$\mathcal{A}(B)$ is of finite type if and only if every chordless cycle in $\Gamma(B)$ is
cyclically oriented, and $B$ has a positive quasi-Cartan companion.
\end{theorem}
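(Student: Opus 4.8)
We sketch a proof; the statement is that of~\cite{bgz}. Write $\Gamma=\Gamma(B)$. The two implications have rather different flavours, and the real content of the forward one is the construction of a positive quasi-Cartan companion. A convenient organising principle is that the pair of conditions in the theorem cuts out a class of matrices which one wants to show is closed under mutation: then the theorem follows by comparing an arbitrary seed with a seed whose Cartan counterpart is the Cartan matrix of the ambient Dynkin diagram, using the Fomin--Zelevinsky classification~\cite[1.4]{fominzelevinsky2} and the criterion~\cite[1.8]{fominzelevinsky2}.

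\smallskip
\noindent\emph{($\Rightarrow$)} Assume $\mathcal{A}(B)$ is of finite type. That every chordless cycle of $\Gamma$ is cyclically oriented is precisely Proposition~\ref{p:cycles}. It remains to produce a positive quasi-Cartan companion of $B$ --- equivalently, a companion basis for $B$ in the sense discussed in the introduction. The plan is to single out the \emph{admissible} companions of $B$: those quasi-Cartan companions $A$ satisfying, along each chordless cycle of $\Gamma$, the sign condition of~\cite{bgz} (for an all-weight-$1$ cycle this amounts to requiring an odd number of edges $\{i,j\}$ of the cycle with $A_{ij}>0$, with the analogous prescriptions for the mixed-weight cycles in Figure~\ref{f:cycles}), and then to show (i) that an admissible companion exists and (ii) that it is positive. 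For (i) I would transport signs along a mutation path: by~\cite[1.4]{fominzelevinsky2} a sequence of mutations carries $B$ to a matrix $B_0$ whose Cartan counterpart is the Cartan matrix of the ambient Dynkin diagram, and this Cartan matrix is a positive --- and, $\Gamma(B_0)$ being a forest, vacuously admissible --- companion of $B_0$; one then equips quasi-Cartan companions with a mutation rule (reversing certain signs and introducing or removing the entries across length-two paths through the mutated vertex, exactly as the diagram changes, with the new signs chosen to maintain admissibility) and checks that admissibility is preserved, using the classification of how chordless cycles are created and destroyed under mutation in Lemma~\ref{l:newcycles}. For (ii) I would realise the symmetrisation of $A$, up to a positive scalar, as the Gram matrix of $n$ explicit vectors in the ambient (finite-type, hence Euclidean) root space: the sign conditions are exactly what is needed for the vectors obtained by reading round each chordless cycle to be genuine roots, and an $n$-tuple of roots spanning the whole space has positive definite Gram matrix. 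I expect the main obstacle to be the sign bookkeeping in step (i): when chordless cycles overlap, the conditions they impose must be simultaneously satisfiable and must remain so after each mutation. This is kept under control by the rigidity of finite-type diagrams --- a vertex outside a chordless cycle is joined to at most two of its vertices, and then to adjacent ones (Lemma~\ref{l:keylemma}) --- so chordless cycles meet only in short arcs, and the local configurations in Lemma~\ref{l:newcycles} are exhaustive.

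\smallskip
\noindent\emph{($\Leftarrow$)} Now assume every chordless cycle of $\Gamma$ is cyclically oriented and $B$ has a positive quasi-Cartan companion $A$, say with $DA$ positive definite. For any $i\neq j$, the principal submatrix of $A$ on rows and columns $\{i,j\}$ is a quasi-Cartan companion of the corresponding submatrix of $B$, and its symmetrisation, being a principal submatrix of $DA$, is positive definite; since $D$ forces the two off-diagonal entries of this $2\times 2$ matrix to have the same sign, positivity of its determinant gives $|B_{ij}B_{ji}|=A_{ij}A_{ji}<4$, so $|B_{ij}B_{ji}|\le 3$ and $B$ is $2$-finite. Moreover the companion mutation rule of step (i) acts on the symmetrised form by an invertible change of basis (a partial reflection along the mutated vector), hence preserves positive definiteness, so that every matrix occurring in a seed of $\mathcal{A}(B)$ again carries a positive quasi-Cartan companion and is therefore $2$-finite; by~\cite[1.8]{fominzelevinsky2}, $\mathcal{A}(B)$ is of finite type. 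Note that the cyclic-orientation hypothesis is genuinely needed in this direction: a quasi-Cartan companion does not see the orientation of $\Gamma$, so a diagram containing a non-cyclically-oriented chordless triangle with all weights $1$ still admits a positive companion while failing, by Proposition~\ref{p:cycles}, to be of finite type.
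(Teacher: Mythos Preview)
The paper does not prove this theorem at all: it is simply quoted from~\cite{bgz} as background, with no argument given. So there is nothing in the paper to compare your sketch against.

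That said, your outline is faithful to the strategy of~\cite{bgz}: the organising principle is exactly that the pair of conditions (cyclically oriented chordless cycles, existence of a positive admissible quasi-Cartan companion) is mutation-invariant, and one then anchors the argument at a seed whose Cartan counterpart is a Cartan matrix. Your forward direction is fine, and the $2\times 2$ minor computation in the backward direction is correct.

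There is one genuine gap in your backward direction, however. You argue that the companion mutation rule preserves positive definiteness (since it is a change of basis on the symmetrised form), and conclude that every mutation of $B$ again has a positive companion and is therefore $2$-finite. But your companion mutation rule from step~(i) was set up using admissibility, which only makes sense when the chordless cycles of the \emph{mutated} diagram are again cyclically oriented. You never verify this: cyclic orientation of chordless cycles is \emph{not} preserved under mutation in general (it fails already for mutation-infinite diagrams), and in~\cite{bgz} its preservation is established jointly with positivity --- the two conditions prop each other up under mutation. Without this, you cannot iterate the companion mutation beyond the first step, and the appeal to~\cite[1.8]{fominzelevinsky2} collapses. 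Your final paragraph correctly observes that the cyclic-orientation hypothesis is not redundant, but the reason you give (that companions are blind to orientation) is about the \emph{statement}; the place it is actually used in the \emph{proof} is precisely to keep the inductive mutation argument running.
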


We also mention the following result, which we shall need later:

\begin{prop} \cite[1.4]{bgz} \label{p:cyclecondition}
Let $A$ be a positive quasi-Cartan companion of a \linebreak
skew-symmetrisable
matrix $B$. Then, for every chordless cycle
$$i_0\rightarrow i_1\rightarrow \cdots \rightarrow i_{d-1}\rightarrow i_0$$
in $\Gamma(B)$, the following holds:
$$\prod_{a=0}^{d-1} (-A_{i_a,i_{a+1}}) < 0.$$
\end{prop}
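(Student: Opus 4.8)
The plan is to reduce to the cycle itself and then argue geometrically from the positive-definiteness of the symmetrised form, obtaining a contradiction should the displayed product be non-negative. First I would pass to the principal submatrix on the cycle: since $C$ is chordless (induced), the principal submatrix $B_C$ of $B$ on the vertices $i_0,\dots,i_{d-1}$ has diagram equal to $C$, so the only non-zero off-diagonal entries of $B_C$, and hence of the corresponding principal submatrix $A_C$ of $A$, are those between consecutive vertices of the cycle. Moreover $A_C$ is again a positive quasi-Cartan companion of $B_C$: the quasi-Cartan condition is inherited, and positivity passes to principal submatrices of $DA$. So it suffices to treat the case $\Gamma(B)=C$, and here $d\ge 3$, since $B_{i_a,i_{a+1}}$ and $B_{i_{a+1},i_a}$ must have opposite signs.

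Next, choose $D=\operatorname{diag}(d_{i_0},\dots,d_{i_{d-1}})$ so that $DA_C$ is symmetric and positive definite, and write $DA_C$ as the Gram matrix of linearly independent vectors $\beta_0,\dots,\beta_{d-1}$ in $\mathbb{R}^d$. Then $\|\beta_a\|^2=2d_{i_a}$, the product $(\beta_a,\beta_b)$ vanishes whenever $a,b$ are non-adjacent in $C$, and $(\beta_a,\beta_{a+1})=d_{i_a}A_{i_a,i_{a+1}}$ has the same sign as $A_{i_a,i_{a+1}}$ with $(\beta_a,\beta_{a+1})^2=d_{i_a}d_{i_{a+1}}|B_{i_a,i_{a+1}}B_{i_{a+1},i_a}|$. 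Passing to unit vectors $u_a=\beta_a/\|\beta_a\|$ (subscripts taken modulo $d$) gives $\operatorname{sign}(u_a,u_{a+1})=\operatorname{sign}(A_{i_a,i_{a+1}})$ and $(u_a,u_{a+1})^2=\tfrac14|B_{i_a,i_{a+1}}B_{i_{a+1},i_a}|$. The one genuine input now enters: an edge of $\Gamma(B)$ forces $|B_{i_a,i_{a+1}}B_{i_{a+1},i_a}|\ge 1$, so $|(u_a,u_{a+1})|\ge\tfrac12$ for every $a$.

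Finally I would run a parity argument. Each $A_{i_a,i_{a+1}}$ is a non-zero integer, so $P:=\prod_{a=0}^{d-1}(-A_{i_a,i_{a+1}})$ is a non-zero integer; suppose for contradiction that $P>0$. Writing $p$ for the number of indices $a$ with $A_{i_a,i_{a+1}}>0$, a direct count gives $P=(-1)^p\prod_a|A_{i_a,i_{a+1}}|$, so $P>0$ forces $p$ to be even. In that case one can pick signs $\delta_a\in\{\pm1\}$ with $\delta_0=1$ and $\delta_{a+1}=-\operatorname{sign}(A_{i_a,i_{a+1}})\,\delta_a$, and going once round the cycle this assignment closes up consistently precisely because $p$ is even. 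Setting $w_a=\delta_a u_a$, the $w_a$ are linearly independent unit vectors with $(w_a,w_b)=0$ for non-adjacent $a,b$ and $(w_a,w_{a+1})=-|(u_a,u_{a+1})|\le-\tfrac12$, whence
\[
0<\Bigl\|\sum_{a=0}^{d-1}w_a\Bigr\|^2=\sum_{a=0}^{d-1}\|w_a\|^2+2\sum_{a=0}^{d-1}(w_a,w_{a+1})=d+2\sum_{a=0}^{d-1}(w_a,w_{a+1})\le d-2\cdot\tfrac{d}{2}=0,
\]
a contradiction, where the strict inequality on the left uses linear independence of the $w_a$. Hence $P<0$, as required. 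The main obstacle I anticipate is the parity bookkeeping in this last step: proving the identity $P=(-1)^p\prod_a|A_{i_a,i_{a+1}}|$ and confirming that the sign assignment $\delta_a$ is consistent around the closed cycle exactly when $p$ is even. Everything else is routine linear algebra, the essential geometric content being the bound $|(u_a,u_{a+1})|\ge\tfrac12$ for an edge together with the fact that a positive definite form cannot vanish on a non-zero vector.
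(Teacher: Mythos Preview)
The paper does not prove this proposition; it is quoted without proof from \cite[Prop.\ 1.4]{bgz}. So there is no in-paper argument to compare against.

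Your argument is correct. The reduction to the principal submatrix on the cycle is clean, and the Gram-matrix realisation of $DA_C$ together with the normalisation to unit vectors gives exactly the bound $|(u_a,u_{a+1})|\ge\tfrac12$ you need (using that $|B_{i_a,i_{a+1}}B_{i_{a+1},i_a}|\ge 1$ for integer matrices with a nonzero entry). The parity bookkeeping is fine: $P=(-1)^d\prod_a A_{i_a,i_{a+1}}=(-1)^d(-1)^{d-p}\prod_a|A_{i_a,i_{a+1}}|=(-1)^p\prod_a|A_{i_a,i_{a+1}}|$, and the sign-flip $\delta_a$ closes up around the cycle precisely when $(-1)^p=1$. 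The final inequality $\|\sum w_a\|^2\le 0$ then contradicts linear independence. One small remark: your justification ``here $d\ge 3$, since $B_{i_a,i_{a+1}}$ and $B_{i_{a+1},i_a}$ must have opposite signs'' is not the reason; $d\ge 3$ is simply part of what it means to be a cycle. Otherwise the proof is complete and self-contained, and is in the same spirit as the argument in \cite{bgz}.
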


Let $\Phi$ be a root system of rank $n$ associated to a Dynkin diagram $\Delta$
and let $B$ be the exchange matrix in a seed $(\mathbf{x},B)$ of the cluster
algebra of type $\Delta$. Motivated by~\cite{bgz},
Parsons~\cite[Defn.\ 4.1]{parsons}
defines a \emph{companion basis} of $B$ to be a subset
$\beta_1,\beta_2,\ldots ,\beta_n$ of $\Phi$ which is a $\mathbb{Z}$-basis of
$\mathbb{Z}\Phi$ such that the matrix $A$ with $i,j$ entry
$A_{ij}=(\beta_i,\beta_j)$ for all $i,j$ is a quasi-Cartan companion of $B$.
In the general case (not necessarily simply-laced),
we use $A_{ij}=(\beta_i,\beta^{\vee}_j)$.
Parsons uses companion bases to obtain the dimension vectors of the indecomposable
modules over the cluster-tilted algebra associated to $(\mathbf{x},B)$ by~\cite{bmrrt}
and~\cite{ccs}, in the type $A$ case (they are obtained by expanding arbitrary roots in terms of
the basis; see~\cite[Theorem 5.3]{parsons}).
We remark that independent later
work of Ringel~\cite{ringel} also obtains such dimension vectors (in a more general setting which
includes all the finite type cases).

Suppose that $\B=\{\beta_1,\beta_2,\ldots ,\beta_n\}$ is a companion basis for
an exchange matrix $B$ in a cluster algebra of finite type as above, and fix $1\leq k\leq n$.
Parsons~\cite[Thm.\ 6.1]{parsons} defines the (inward) \emph{mutation} $\B'=\mu_k(\B)$ of
$\B$ at $k$ to be the subset $\{\beta'_1,\beta'_2,\ldots ,\beta'_n\}$ of $\Phi$ defined by

$$\beta'_i=\begin{cases}
s_{\beta_k}(\beta_i) & \text{if there is an arrow from $i$ to $k$ in $\Gamma(B)$;} \\
\beta_i & \text{otherwise.}
\end{cases}$$

Outward mutation of $\B$ is defined similarly.

\begin{prop} \cite[Thm.\ 6.1]{parsons}
\label{p:mutatecompanionsimplylaced}
Let $\B$ be a companion basis for an exchange matrix $B$ in a cluster
algebra of finite simply-laced type and fix $1\leq k\leq n$.
Then $\mu_k(\B)$ is a companion basis for $\mu_k(B)$.
\end{prop}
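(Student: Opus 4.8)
The plan is to check directly that $\mu_k(\B)=\{\beta'_1,\dots,\beta'_n\}$ meets all the requirements of a companion basis for $B':=\mu_k(B)$: that each $\beta'_i$ lies in $\Phi$, that $\{\beta'_i\}$ is a $\mathbb{Z}$-basis of $\mathbb{Z}\Phi$, and that the Gram matrix $A'$ with $A'_{ij}=(\beta'_i,(\beta'_j)^\vee)$ is a positive quasi-Cartan companion of $B'$. Throughout I would exploit the simply-laced hypothesis: all roots have the same length, so after normalising $(\gamma,\gamma)=2$ one has $\gamma^\vee=\gamma$ for every root, the form is $W(\Phi)$-invariant and positive definite, $A$ is symmetric with off-diagonal entries in $\{0,\pm1\}$, and any two roots $\gamma,\delta$ with $\delta\neq\pm\gamma$ satisfy $(\gamma,\delta^\vee)\in\{0,\pm1\}$. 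I would also record the elementary facts that the reflection $s_{\beta_k}$ in $\beta_k$ lies in the Weyl group $W(\Phi)$, is a linear involution stabilising $\mathbb{Z}\Phi$ and $\Phi$, that $s_{\beta_k}(\gamma)^\vee=s_{\beta_k}(\gamma^\vee)$, and that $(\beta_k,s_{\beta_k}(x))=-(\beta_k,x)$.

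The first two points are quick. Since $\beta'_i$ is either $\beta_i$ or $s_{\beta_k}(\beta_i)$, it is a root, so $\mu_k(\B)\subseteq\Phi$. Writing $\epsilon_i=1$ if there is an arrow $i\to k$ in $\Gamma(B)$ (i.e. $B_{ik}>0$) and $\epsilon_i=0$ otherwise, so that $\epsilon_k=0$, we have $\beta'_i=\beta_i-\epsilon_i A_{ik}\beta_k$ with $A_{ik}\in\mathbb{Z}$ a Cartan integer. Hence the matrix expressing the $\beta'_i$ in the basis $\{\beta_m\}$ equals the identity plus a matrix supported in row $k$ off the diagonal, so it is unipotent of determinant $1$, and $\mu_k(\B)$ is again a $\mathbb{Z}$-basis of $\mathbb{Z}\Phi$. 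Positivity of $A'$ is automatic, being the Gram matrix of a basis for a positive definite form; $A'$ is symmetric; and $A'_{ii}=(\beta'_i,(\beta'_i)^\vee)=2$ since $\beta'_i$ is a root. Thus the content of the statement reduces to the identity $|A'_{ij}|=|B'_{ij}|$ for $i\neq j$.

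This I would establish by a short case analysis on $\epsilon_i,\epsilon_j$ and on whether $i$ or $j$ equals $k$, using the mutation rule for $B$ together with the identities for $s_{\beta_k}$ above. If $i=k$ (or $j=k$) then $B'_{kj}=-B_{kj}$ while $A'_{kj}=\pm A_{kj}$, so $|A'_{kj}|=|A_{kj}|=|B_{kj}|$. If $\epsilon_i=\epsilon_j$ and $i,j\neq k$ then the correction term vanishes on both sides (using that $s_{\beta_k}$ is an isometry when $\epsilon_i=\epsilon_j=1$), so $A'_{ij}=A_{ij}$ and $B'_{ij}=B_{ij}$. The only substantive case is $\epsilon_i\neq\epsilon_j$, say $\epsilon_i=1$, $\epsilon_j=0$: here $A'_{ij}=A_{ij}-A_{ik}A_{kj}$ and $B'_{ij}=B_{ij}+B_{ik}B_{kj}$ with $B_{ik}=1$ and $B_{kj}\in\{0,1\}$. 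If $B_{kj}=0$ then $A_{kj}=0$ and everything collapses to $A'_{ij}=A_{ij}$, $B'_{ij}=B_{ij}$. If $B_{kj}=1$, then $2$-finiteness of $B$ and of $B'$ forces $B_{ij}\in\{-1,0\}$, and matching absolute values reduces to a single assertion: when $B_{ij}=-1$ one needs $A_{ij}=A_{ik}A_{kj}$ (so that $A'_{ij}=0$ rather than $\pm2$).

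The crux, and the step I expect to be the main obstacle, is precisely this sign condition, and it is exactly where I would invoke Proposition~\ref{p:cyclecondition}. When $B_{ik}=B_{kj}=1$ and $B_{ij}=-1$, the vertices $i,k,j$ span a chordless, cyclically oriented $3$-cycle $i\to k\to j\to i$ in $\Gamma(B)$, so $(-A_{ik})(-A_{kj})(-A_{ji})<0$; since $A$ is symmetric and each of $A_{ik},A_{kj},A_{ij}$ equals $\pm1$, this forces $A_{ik}A_{kj}A_{ij}=1$, i.e. $A_{ij}=A_{ik}A_{kj}$, as needed. (Alternatively one can avoid Proposition~\ref{p:cyclecondition} altogether: having already shown $\mu_k(\B)$ is a basis, $\beta'_i$ and $\beta'_j$ are distinct, hence non-proportional, roots, so $|A'_{ij}|=|(\beta'_i,(\beta'_j)^\vee)|\le1$, which rules out $A'_{ij}=\pm2$ directly.) Once the signs are pinned down, all cases give $|A'_{ij}|=|B'_{ij}|$, so $A'$ is a positive quasi-Cartan companion of $\mu_k(B)$ and $\mu_k(\B)$ is a companion basis for $\mu_k(B)$. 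Beyond this sign point I anticipate only routine bookkeeping of the mutation formula against the reflection identities.
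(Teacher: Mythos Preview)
Your argument is correct. Note, however, that the paper does not supply its own proof of this proposition: it is quoted from Parsons~\cite[Thm.\ 6.1]{parsons} without argument. The paper does prove the more general Proposition~\ref{p:mutatecompanion} (covering all finite types), and your approach mirrors that proof closely: the reductions on $\beta'_i\in\Phi$, the $\mathbb{Z}$-basis property, and positivity are handled exactly as you do, and the remaining verification $|A'_{ij}|=|B'_{ij}|$ is done by the same kind of case split on how $i,j$ relate to $k$, with the decisive sign constraint in the $3$-cycle case supplied by Proposition~\ref{p:cyclecondition}. Your alternative at the end---bounding $|A'_{ij}|\le 1$ directly because $\beta'_i,\beta'_j$ are non-proportional roots in a simply-laced system---is a clean shortcut specific to the simply-laced case that bypasses Proposition~\ref{p:cyclecondition}; the paper does not (and cannot) use this shortcut in its proof of the general Proposition~\ref{p:mutatecompanion}, since in types $B,C,F$ distinct roots can pair to $\pm2$.
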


We remark that mutations of quasi-Cartan companion matrices were considered
in~\cite{bgz}.

Proposition~\ref{p:mutatecompanionsimplylaced} can be extended to all finite
type cases using a similar argument:

\begin{prop} \label{p:mutatecompanion}
Let $\B$ be a companion basis for an exchange matrix $B$ in a cluster
algebra of finite type and fix $1\leq k\leq n$.
Then $\mu_k(\B)$ is a companion basis for $\mu_k(B)$.
\end{prop}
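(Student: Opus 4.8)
The plan is to verify that $\mu_k(\B)=\{\beta'_1,\ldots,\beta'_n\}$ is a companion basis for $B':=\mu_k(B)$: that it is a $\mathbb{Z}$-basis of $\mathbb{Z}\Phi$ and that the matrix $A'$ with entries $A'_{ij}=(\beta'_i,\beta'^{\vee}_j)$ is a positive quasi-Cartan companion of $B'$. Most of this is formal. Each $\beta'_i$ is again a root, since $s_{\beta_k}$ lies in the Weyl group of $\Phi$, which permutes $\Phi$; hence $A'_{ii}=(\beta'_i,\beta'^{\vee}_i)=2$. The matrix expressing the $\beta'_i$ in the basis $\B$ is the identity except that $\beta'_i=\beta_i-A_{ik}\beta_k$ at the mutated indices, so it is unimodular, and therefore $\mu_k(\B)$ is again a $\mathbb{Z}$-basis of $\mathbb{Z}\Phi$. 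In particular the $\beta'_i$ are $\mathbb{R}$-linearly independent in the Euclidean space $\mathbb{R}\Phi$, so their Gram matrix, and hence the symmetrisation of $A'$, is automatically positive definite; unlike in the matrix formulation of~\cite{bgz}, positivity requires no argument here. The only substantial point is that $|A'_{ij}|=|B'_{ij}|$ for $i\neq j$.

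To prove this I would inspect the subdiagram of $\Gamma(B)$ on $\{i,j,k\}$. If $k\in\{i,j\}$, the claim is immediate since $|A'_{ik}|=|A_{ik}|$ (a short computation using $s_{\beta_k}(\beta_k)=-\beta_k$) and $|B'_{ik}|=|B_{ik}|$. So assume $i,j\neq k$. Using that $s_{\beta_k}$ is an involution in the Weyl group which preserves the canonical pairing of $\Phi$ with $\Phi^{\vee}$ and commutes with $(-)^{\vee}$, one obtains $A'_{ij}=A_{ij}$ when $s_{\beta_k}$ is applied to both of $\beta_i,\beta_j$ or to neither, and $A'_{ij}=A_{ij}-A_{ik}A_{kj}$ when it is applied to exactly one of them. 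If at most one of $i,j$ is connected to $k$, then $A_{ik}A_{kj}=0$ and $B'_{ij}=B_{ij}$, so both formulas give $A'_{ij}=A_{ij}$ and the claim follows from $|A_{ij}|=|B_{ij}|$. If both of $i,j$ are connected to $k$ and $k$ is a source or a sink among the two edges at it, then by Proposition~\ref{p:cycles} there is no edge between $i$ and $j$, and one checks that the mutation term in $B'_{ij}$ then vanishes, so $B'_{ij}=B_{ij}=0=A'_{ij}$.

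The substantive case, which I expect to be the main obstacle, is when $\Gamma(B)$ contains a path $i\to k\to j$ (after possibly interchanging $i$ and $j$): then $s_{\beta_k}$ is applied to exactly one of $\beta_i,\beta_j$, so $A'_{ij}=A_{ij}-A_{ik}A_{kj}$, while $B'_{ij}=B_{ij}+|B_{ik}B_{kj}|$ and $|A_{ik}A_{kj}|=|B_{ik}B_{kj}|$; thus everything reduces to a comparison of signs. If $i$ and $j$ are not joined by an edge, then $A_{ij}=B_{ij}=0$ and both sides equal $|B_{ik}B_{kj}|$. If they are joined, then $\{i,j,k\}$ spans a chordless $3$-cycle, necessarily oriented $i\to k\to j\to i$ by Proposition~\ref{p:cycles}, so $B_{ij}<0$ has sign opposite to $|B_{ik}B_{kj}|>0$; and the sign condition of Proposition~\ref{p:cyclecondition}, together with the fact that $A_{ij}$ and $A_{ji}$ always have the same sign, forces $A_{ij}$ and $A_{ik}A_{kj}$ to have the same sign. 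Hence $|A'_{ij}|=\bigl||A_{ij}|-|A_{ik}A_{kj}|\bigr|=\bigl||B_{ij}|-|B_{ik}B_{kj}|\bigr|=|B'_{ij}|$. This is essentially the argument Parsons uses for Proposition~\ref{p:mutatecompanionsimplylaced}; the only genuinely new feature in the non-simply-laced case is the asymmetry of $(\cdot,\cdot^{\vee})$, handled by the two observations about $s_{\beta_k}$ and about the signs of $A_{ij},A_{ji}$, while the structural results of Section~2 keep the number of configurations to check finite.
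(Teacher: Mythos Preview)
Your proof is correct and takes a more uniform route than the paper's. The paper first appeals to Parsons for the cases where $p=k$, $q=k$, or at most one of $p,q$ is joined to $k$, and then reduces the genuinely non-simply-laced work to the three-vertex configurations (d), (e), (f) of Corollary~\ref{c:localmutation}; for each of these it writes out the two admissible $3\times 3$ exchange matrices and quasi-Cartan companions explicitly, computes $(\gamma_i,\gamma_j^{\vee})$ and $(\gamma_j,\gamma_i^{\vee})$ by hand, and in the cycle cases tabulates the four sign patterns permitted by Proposition~\ref{p:cyclecondition}. You instead derive once and for all that $A'_{ij}=A_{ij}-A_{ik}A_{kj}$ whenever exactly one of $\beta_i,\beta_j$ is reflected (via the two facts that $s_{\beta_k}$ preserves the pairing and commutes with $(-)^{\vee}$), and then invoke Proposition~\ref{p:cyclecondition} together with the observation that $A_{ij}$ and $A_{ji}$ share a sign to conclude that $A_{ij}$ and $A_{ik}A_{kj}$ have the same sign in the oriented-triangle case, matching the cancellation on the $B$-side. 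Your remark that positivity of $A'$ is automatic (the $\beta'_i$ remain an $\mathbb{R}$-basis of the Euclidean space $\mathbb{R}\Phi$, so their Gram matrix, hence any symmetrisation of $A'$, is positive definite) is also a genuine simplification over the matrix formulation. The paper's case-by-case computation has the advantage of making each local picture completely explicit, which can be useful downstream; your argument is shorter and isolates the single structural input (the cycle sign condition) from the bookkeeping.
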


\begin{proof}
We must check that if $\B=\{\beta_1,\beta_2,\ldots ,\beta_n\}$ is
a companion basis for an exchange matrix $B$ underlying a diagram $\Gamma$
then $\B'=\{\gamma_1,\gamma_2,\ldots , \gamma_n\}=\mu_k(\B)$ is a companion
basis for the mutation $B'=\mu_k(B)$, which underlies $\mu_k(\Gamma)$.
Note that~\cite[Thm.\ 6.1]{parsons} covers the cases
$(\gamma_p,\gamma_q)$ where $p=k$ or $q=k$ or where at most one of $p$ and $q$
is connected to $k$ by an edge (in this case the assumption
of simply-laced type is not used).

Hence, since~\cite[Thm.\ 6.1]{parsons} covers the simply-laced
case, we are reduced to cases (d),(e) and (f) in
Corollary~\ref{c:localmutation}. In each case, we are only left with checking
that $|(\gamma_i,\gamma_j)|=|B'_{ij}|$.

We denote by $A$ the $3\times 3$ matrix
$$
\begin{pmatrix}
(\beta_i,\beta_i^{\vee}) & (\beta_i,\beta_k^{\vee}) & (\beta_i,\beta_j^{\vee}) \\
(\beta_k,\beta_i^{\vee}) & (\beta_k,\beta_k^{\vee}) & (\beta_k,\beta_j^{\vee}) \\
(\beta_j,\beta_i^{\vee}) & (\beta_j,\beta_k^{\vee}) & (\beta_j,\beta_j^{\vee})
\end{pmatrix}.
$$

We consider first case (d), mutating from left to right.
There are two possibilities for $A$:
$$\begin{pmatrix}
2 & \pm 1 & 0 \\
\pm 2 & 2 & \pm 1 \\
0 & \pm 1 & 2
\end{pmatrix},
\begin{pmatrix}
2 & \pm 2 & 0 \\
\pm 1 & 2 & \pm 1 \\
0 & \pm 1 & 2
\end{pmatrix}.$$

In the first case, the mutation of $B$ is:
$$
\begin{pmatrix}
0 & 1 & 0 \\
-2 & 0 & 1 \\
0 & -1 & 0
\end{pmatrix}
\rightarrow
\begin{pmatrix}
0 & -1 & 1 \\
2 & 0 & -1 \\
-2 & 1 & 0
\end{pmatrix}.
$$
We have
\begin{align*}
(\gamma_i,\gamma_j^{\vee}) &= (s_{\beta_k}(\beta_i),\beta_j^{\vee}) \\
&= (\beta_i-(\beta_i,\beta_k^{\vee})\beta_k,\beta_j^{\vee}) \\
&= (\beta_i,\beta_j^{\vee})-(\beta_i,\beta_k^{\vee})(\beta_k,\beta_j^{\vee}) = \pm (\beta_i,\beta_k^{\vee}) = \pm 1,
\end{align*}
and
\begin{align*}
(\gamma_j,\gamma_i^{\vee}) &= (\beta_j,(s_{\beta_k}(\beta_i))^{\vee}) \\
&= (s_{\beta_k}(\beta_j),\beta_i^{\vee}) \\
&= (\beta_j-(\beta_j,\beta_k^{\vee})\beta_k,\beta_i^{\vee}) \\
&= (\beta_j,\beta_i^{\vee})-(\beta_j,\beta_k^{\vee})(\beta_k,\beta_i^{\vee}) = \pm (\beta_k,\beta_i^{\vee}) = \pm 2,
\end{align*}
and this case is done.

In the second case, the mutation is:
$$
\begin{pmatrix}
0 & 2 & 0 \\
-1 & 0 & 1 \\
0 & -1 & 0
\end{pmatrix}
\to
\begin{pmatrix}
0 & -2 & 2 \\
1 & 0 & -1 \\
-1 & 1 & 0
\end{pmatrix}
$$
Arguing as above, we have $(\gamma_i,\gamma_j^{\vee})=\pm 2$ and
$(\gamma_j,\gamma_i^{\vee})=\pm 1$ as required.

Next, we consider case (d), mutating from right to left. Note that the
skew-symmetrisability of $B$ imposes restrictions; see also
\cite[Lemma 7.6]{fominzelevinsky2}. The only possibilities for $B$ are thus
$$
\begin{pmatrix}
0 & -1 & 1 \\
2 & 0 & -1 \\
-2 & 1 & 0
\end{pmatrix}
\quad\text{and}\quad
\begin{pmatrix}
0 & -2 & 2 \\
1 & 0 & -1 \\
-1 & 1 & 0
\end{pmatrix}.
$$
In the first case, the mutation is
$$
\begin{pmatrix}
0 & -1 & 1 \\
2 & 0 & -1 \\
-2 & 1 & 0
\end{pmatrix}
\to
\begin{pmatrix}
0 & 1 & 0 \\
-2 & 0 & 1 \\
0 & -1 & 0
\end{pmatrix}.
$$
We have $(\gamma_i,\gamma_j^{\vee})=(\beta_i,\beta_j^{\vee})-(\beta_i,\beta_k^{\vee})(\beta_k,\beta_j^{\vee})$.
Moreover,
$|(\beta_i,\beta_j^{\vee})|=|(\beta_i,\beta_k^{\vee})|=|(\beta_k,\beta_j^{\vee})|=1$.
Note that the matrix $A$
must be positive (as the $\beta_i$ form a basis of $\mathbb{Z}\Phi$). Hence, by Proposition~\ref{p:cyclecondition}, an odd number of
the signs of $(\beta_i,\beta_j^{\vee})$, $(\beta_i,\beta_k^{\vee})$ and $(\beta_k,\beta_j^{\vee})$ must
be positive. We obtain the following table of possible values:

\begin{center}
\begin{tabular}{c|c|c|c}
$(\beta_i,\beta_j^{\vee})$ & $(\beta_i,\beta_k^{\vee})$ & $(\beta_k,\beta_j^{\vee})$ & $(\gamma_i,\gamma_j^{\vee})$ \\
\hline
$1$ & $1$ & $1$ & $0$ \\
$1$ & $-1$ & $-1$ & $0$ \\
$-1$ & $1$ & $-1$ & $0$ \\
$-1$ & $-1$ & $1$ & $0$ \\
\end{tabular}
\end{center}

In the second case, the mutation is
$$
\begin{pmatrix}
0 & -2 & 2 \\
1 & 0 & -1 \\
-1 & 1 & 0
\end{pmatrix}
\to
\begin{pmatrix}
0 & 2 & 0 \\
-1 & 0 & 1 \\
0 & -1 & 0
\end{pmatrix}.
$$
As before, $(\gamma_i,\gamma_j^{\vee})=(\beta_i,\beta_j^{\vee})-(\beta_i,\beta_k^{\vee})(\beta_k,\beta_j^{\vee})$.
We have $|(\beta_i,\beta_j^{\vee})|=|(\beta_i,\beta_k^{\vee})|=2$ and $|(\beta_k,\beta_j^{\vee})|=1$.
As in the previous case, we obtain a table of possible values:
\begin{center}
\begin{tabular}{c|c|c|c}
$(\beta_i,\beta_j^{\vee})$ & $(\beta_i,\beta_k^{\vee})$ & $(\beta_k,\beta_j^{\vee})$ & $(\gamma_i,\gamma_j^{\vee})$ \\
\hline
$2$ & $2$ & $1$ & $0$ \\
$2$ & $-2$ & $-1$ & $0$ \\
$-2$ & $2$ & $-1$ & $0$ \\
$-2$ & $-2$ & $1$ & $0$ \\
\end{tabular}
\end{center}
and case (d) is complete.

For case (e), we note that replacing $B$ with $-B$ (and keeping the
$\beta_i$ unchanged) gives an instance of case (d).
Hence, we have
$|\mu_k(-B)_{ij}|=|-B'_{ij}|=|(\gamma'_i,\gamma'_j)|$, where
$$\gamma'_i=\begin{cases} s_{\beta_k}(\beta_i) & i\rightarrow k \text{ in } \Gamma(-B); \\
\beta_i & \text{else}, \end{cases}$$
which coincides with $s_{\beta_k}(\gamma_i)$. Therefore
$$|B_{ij}|=|(s_{\beta_k}(\gamma_i),s_{\beta_k}(\gamma_i))|=|(\gamma_i,\gamma_j)|,$$
as required.

We are left with case
(f) to consider. By symmetry, we only need to consider the
mutation from the left hand side
of the figure to the right hand side. Using the skew-symmetrisability of $B$
(see \cite[Lemma 7.6]{fominzelevinsky2}) we conclude that there are only
two possible cases for $B$, namely
$$
\begin{pmatrix}
0 & 1 & -1 \\
-2 & 0 & 2 \\
1 & -1 & 0
\end{pmatrix}
\quad\text{and}\quad
\begin{pmatrix}
0 & 2 & -1 \\
-1 & 0 & 1 \\
1 & -2 & 0
\end{pmatrix}.
$$
In the first case, the mutation is
$$
\begin{pmatrix}
0 & 1 & -1 \\
-2 & 0 & 2 \\
1 & -1 & 0
\end{pmatrix}
\to
\begin{pmatrix}
0 & -1 & 1 \\
2 & 0 & -2 \\
-1 & 1 & 0
\end{pmatrix}.
$$

We have $(\gamma_i,\gamma_j^{\vee})=(s_{\beta_k}(\beta_i),\beta_j^{\vee})=
(\beta_i,\beta_j^{\vee})-(\beta_i,\beta_k^{\vee})(\beta_k,\beta_j^{\vee})$.
Again, using Proposition~\ref{p:cyclecondition}, we obtain the following table of possible values:

\begin{center}
\begin{tabular}{c|c|c|c}
$(\beta_i,\beta_j^{\vee})$ & $(\beta_i,\beta_k^{\vee})$ & $(\beta_k,\beta_j^{\vee})$ & $(\gamma_i,\gamma_j^{\vee})$ \\
\hline
$1$ & $1$ & $2$ & $-1$ \\
$1$ & $-1$ & $-2$ & $-1$ \\
$-1$ & $1$ & $-2$ & $1$ \\
$-1$ & $-1$ & $2$ & $1$ \\
\end{tabular}
\end{center}

A similar argument shows that $(\gamma_j,\gamma_i^{\vee})=\pm 1$.
In the second case, the mutation is:
$$
\begin{pmatrix}
0 & 2 & -1 \\
-1 & 0 & 1 \\
1 & -2 & 0
\end{pmatrix}
\to
\begin{pmatrix}
0 & -2 & 1 \\
1 & 0 & -1 \\
-1 & 2 & 0
\end{pmatrix}.
$$

As before, $(\gamma_i,\gamma_j^{\vee})=(\beta_i,\beta_j^{\vee})-(\beta_i,\beta_k^{\vee})(\beta_k,\beta_j^{\vee})$.
and we obtain the following table of possibilities:

\begin{center}
\begin{tabular}{c|c|c|c}
$(\beta_i,\beta_j^{\vee})$ & $(\beta_i,\beta_k^{\vee})$ & $(\beta_k,\beta_j^{\vee})$ & $(\gamma_i,\gamma_j^{\vee})$ \\
\hline
$1$ & $2$ & $1$ & $-1$ \\
$1$ & $-2$ & $-1$ & $-1$ \\
$-1$ & $2$ & $-1$ & $1$ \\
$-1$ & $-2$ & $1$ & $1$ \\
\end{tabular}
\end{center}

A similar argument shows that $(\gamma_j,\gamma_i^{\vee})=\pm 1$.
The proof is complete.
\end{proof}

\begin{remark} \label{r:mutatedall} \rm
Every companion basis for $\mu_k(B)$ is of the form $\mu_k(\B)$ for
some companion basis $\B$ for $B$, since it can be mutated back
to a companion basis for $B$ using outward mutation, and
outward and inward mutation are easily seen to be inverses of each other.
\end{remark}

\begin{corollary} \cite{bgz}
Let $B$ be an exchange matrix in a cluster algebra of finite type.
Then $B$ has a companion basis.
\end{corollary}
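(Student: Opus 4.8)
The plan is to exhibit a companion basis for one distinguished seed and then transport it along mutations using Proposition~\ref{p:mutatecompanion}. By \cite[1.4]{fominzelevinsky2}, the cluster algebra in which $B$ occurs is of some Dynkin type $\Delta$, and it contains a seed $(\mathbf{x}_0,B_0)$ whose Cartan counterpart $A(B_0)$ is, after a suitable labelling of the indices, exactly the Cartan matrix of $\Delta$. Write $\Phi$ for the root system of type $\Delta$ with simple roots $\alpha_1,\dots,\alpha_n$.

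First I would verify that $\B_0=\{\alpha_1,\dots,\alpha_n\}$ is a companion basis for $B_0$. It is a $\mathbb{Z}$-basis of $\mathbb{Z}\Phi$ by definition, and the matrix $A$ with entries $A_{ij}=(\alpha_i,\alpha_j^{\vee})$ is precisely the Cartan matrix of $\Delta$; thus its diagonal entries equal $2$, it is symmetrisable, and its symmetrisation is positive definite, so $A$ is a positive quasi-Cartan matrix. For $i\neq j$ the off-diagonal entry $A_{ij}=(\alpha_i,\alpha_j^{\vee})$ is the $(i,j)$ entry of the Cartan matrix, which by the choice of $B_0$ equals $A(B_0)_{ij}=-|B_{0,ij}|$; hence $|A_{ij}|=|B_{0,ij}|$ and $A$ is a (positive) quasi-Cartan companion of $B_0$. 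So $\B_0$ is a companion basis for $B_0$.

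Next, since all seeds of the cluster algebra are obtained from $(\mathbf{x}_0,B_0)$ by a finite sequence of mutations, there is a sequence $k_1,\dots,k_r$ with $B=(\mu_{k_r}\circ\cdots\circ\mu_{k_1})(B_0)$. Applying Proposition~\ref{p:mutatecompanion} at each step and inducting on $r$, the mutated set $\mu_{k_r}\cdots\mu_{k_1}(\B_0)$ is a companion basis for $B$. Finally, seeds are identified only up to a simultaneous permutation of the rows and columns of the exchange matrix, and applying the same permutation to the elements of a companion basis preserves all the defining conditions, so the statement holds for $B$ in whatever labelling it actually appears.

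There is very little difficulty here: the substantive work is contained in Proposition~\ref{p:mutatecompanion}, which is the real engine of the argument. The only points requiring a little care are the base case — matching the matrix conventions so that the simple roots really do produce a quasi-Cartan companion of $B_0$ with the correct diagonal and off-diagonal absolute values — and keeping track of the permutation used to identify seeds.
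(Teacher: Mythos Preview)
Your argument is correct and is precisely the alternative proof the paper alludes to: the paper's own proof simply cites \cite{bgz} (and \cite[Cor.~3.10]{parsons}) and then remarks that one can instead use Proposition~\ref{p:mutatecompanion}, which is exactly what you have spelled out in detail. Your base case (simple roots give a companion basis for an exchange matrix with Cartan counterpart the Cartan matrix) and inductive transport along mutations are the intended content of that one-line remark.
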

\begin{proof}
This follows from
results in~\cite{bgz} (see also~\cite[Cor.\ 3.10]{parsons}).
Alternatively, it can be proved using Proposition~\ref{p:mutatecompanion}.
\end{proof}

\begin{remark} \rm
It is not the case that every subset of $\Phi$ forming a basis for
$\mathbb{Z}\Phi$ is a companion basis, even if we just restrict to the
positive roots. For example, in type $A_4$, the set of roots
$\{\alpha_1,\alpha_2,\alpha_2+\alpha_3,\alpha_2+\alpha_3+\alpha_4\}$
is a such a subset, but is not a companion basis (the inner product
of any pair of distinct roots in this subset is $\pm 1$).
\end{remark}

We can now prove:

\begin{theorem}
Let $\B=\{\beta_1,\beta_2,\ldots ,\beta_n\}\subseteq \Phi$
be a companion basis for an exchange matrix $B$ in a cluster algebra
of finite type $\Delta$ (with associated root system $\Phi$).
Then there is an isomorphism between $W$ and $W_{\Gamma(B)}$
as defined in Section~\ref{s:groupdefinition} taking $s_{\beta_i}$ to
$s_i$. In particular, the defining relations for $W_{\Gamma(B)}$ can be
regarded as giving a presentation of $W$ in terms of the
generators $s_{\beta_1},s_{\beta_2},\ldots ,s_{\beta_n}$.
\end{theorem}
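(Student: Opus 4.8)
The plan is to produce a surjective homomorphism $\Psi\colon W_{\Gamma(B)}\to W$ with $\Psi(s_i)=s_{\beta_i}$ and then finish by a counting argument. Everything is set up by induction along a sequence of mutations running from a Dynkin seed to $(\Gamma(B),\B)$: such a sequence exists because $\Gamma(B)$ is mutation-equivalent to a Dynkin diagram, and by Proposition~\ref{p:mutatecompanion} together with Remark~\ref{r:mutatedall} the given companion basis $\B$ is obtained by mutating some companion basis at that Dynkin seed. Two statements will be carried along the induction: (i) the reflections $s_{\beta_1},\dots,s_{\beta_n}$ satisfy the defining relations (R1)--(R3) of $W_{\Gamma(B)}$, so that $\Psi$ is a well-defined homomorphism; and (ii) $\langle s_{\beta_1},\dots,s_{\beta_n}\rangle=W$, so that $\Psi$ is onto.

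For the inductive step, assume (i) and (ii) hold for $(\Gamma,\B)$, so $\Psi\colon W_\Gamma\to W$, $s_i\mapsto s_{\beta_i}$, is a surjective homomorphism, and put $\Gamma'=\mu_k(\Gamma)$, $\B'=\mu_k(\B)$. Since $\beta'_i=s_{\beta_k}(\beta_i)$ when there is an arrow $i\to k$ and $\beta'_i=\beta_i$ otherwise, one gets $s_{\beta'_i}=s_{\beta_k}s_{\beta_i}s_{\beta_k}=\Psi(t_i)$ or $s_{\beta'_i}=s_{\beta_i}=\Psi(t_i)$, where the $t_i\in W_\Gamma$ are the elements of Proposition~\ref{p:tirelations}. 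By Proposition~\ref{p:tirelations} the $t_i$ satisfy the relations (R1)--(R3) defining $W_{\Gamma'}$, and since $\Psi$ is a homomorphism their images $s_{\beta'_i}$ satisfy those relations as well; this gives (i) for $(\Gamma',\B')$. For (ii), the subgroup generated by the $s_{\beta'_i}$ contains $s_{\beta_k}$ and the conjugates $s_{\beta_k}s_{\beta_i}s_{\beta_k}$, hence contains every $s_{\beta_i}$, so it equals $\langle s_{\beta_i}\rangle=W$.

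The base case is when $\Gamma$ is a Dynkin diagram: then $W_\Gamma$ is the Coxeter group of type $\Delta$ (Remark~\ref{r:Dynkincase}) and the relations (R3) are vacuous. Because $\Gamma$ is a tree one may change the signs of the $\beta_i$ one vertex at a time --- this does not affect the reflections $s_{\beta_i}$ --- so that the resulting roots $\gamma_i$ satisfy $(\gamma_i,\gamma_j^\vee)=C_{ij}$, the Cartan matrix of $\Delta$; this is possible since being a positive quasi-Cartan companion of the tree $B$ forces $|(\gamma_i,\gamma_j^\vee)|=|C_{ij}|$ with the correct length ratios, and there is no cycle obstruction to the sign choices. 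Then $\{\gamma_i\}$ is a simple system for the root subsystem it generates; that subsystem has Cartan matrix, hence root system, of type $\Delta$, and being contained in $\Phi$ with the same cardinality it equals $\Phi$. Thus $\{\gamma_i\}$ is a simple system for $\Phi$, so by the conjugacy of simple systems there is $w\in W$ and a Dynkin diagram automorphism $\pi$ with $w\alpha_i=\gamma_{\pi(i)}$, i.e.\ $s_{\beta_i}=s_{\gamma_i}=ws_{\alpha_{\pi^{-1}(i)}}w^{-1}$. The map $s_i\mapsto s_{\beta_i}$ is therefore the relabelling isomorphism of $W_\Gamma$ determined by $\pi$ followed by conjugation by $w$, hence an isomorphism; in particular (i) and (ii) hold.

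With (i) and (ii) established in general, $\Psi\colon W_{\Gamma(B)}\to W$ is a surjective homomorphism of finite groups which, by Theorem~\ref{t:isomorphism}(b), have the same order; hence $\Psi$ is an isomorphism, and its inverse $W\to W_{\Gamma(B)}$ sends $s_{\beta_i}$ to $s_i$. The final sentence of the theorem is then simply a reformulation. I expect the only genuinely delicate point to be the base case: identifying a companion basis of a tree, up to sign changes and a diagram automorphism, with the standard simple system; all the rest is bookkeeping built on Propositions~\ref{p:tirelations} and~\ref{p:mutatecompanion}, Remark~\ref{r:mutatedall} and Theorem~\ref{t:isomorphism} (with Proposition~\ref{p:opposite} ensuring the argument is insensitive to the orientation of the Dynkin seed).
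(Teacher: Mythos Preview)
Your argument is correct and structurally parallel to the paper's, but the execution differs in two places worth noting.

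First, the paper builds the isomorphism directly along the mutation sequence: given $\tau_{\B}\colon W\to W_{\Gamma(B)}$ with $\tau_{\B}(s_{\beta_i})=s_i$, it simply composes with the explicit isomorphism $\psi\colon W_{\Gamma(B)}\to W_{\Gamma(\mu_k(B))}$ of Theorem~\ref{t:isomorphism} to obtain $\tau_{\B'}=\psi\circ\tau_{\B}$, checking that $\tau_{\B}(s_{\beta'_i})=t_i$ and $\psi(t_i)=s'_i$. You instead carry only a surjection $\Psi\colon W_{\Gamma(B)}\to W$ through the induction and close the loop at the end with the cardinality equality from Theorem~\ref{t:isomorphism}(b). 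Both work; the paper's version is marginally cleaner because it never leaves the category of isomorphisms, while yours makes explicit that surjectivity propagates (your observation that $\langle s_{\beta'_i}\rangle\supseteq\langle s_{\beta_i}\rangle$).

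Second, your treatment of the base case is more thorough than the paper's. The paper just observes that a simple system is a companion basis for a Dynkin-type $B$ and cites Remark~\ref{r:Dynkincase}; this tacitly assumes the base step for an \emph{arbitrary} companion basis at a Dynkin seed, which is what the inductive hypothesis actually requires. You fill this in: sign-adjust along the tree so that the matrix $(\gamma_i,\gamma_j^\vee)$ becomes the Cartan matrix, and then identify $\{\gamma_i\}$ with a simple system for $\Phi$. (Your phrase ``Dynkin diagram automorphism $\pi$'' is slightly loose --- the permutation relating the two simple systems is really a relabelling matching the Cartan counterpart of $B$ to the standard Cartan matrix --- but the conclusion that $s_i\mapsto s_{\beta_i}$ is an isomorphism is correct, since the $s_{\beta_i}$ satisfy (R1)--(R2), there are no (R3) relations, and they generate $W$.)
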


\begin{proof}
Note that any simple system in $\Phi$ is a companion basis for
an exchange matrix $B$ whose Cartan counterpart is a Cartan matrix
of type $\Delta$. By Remark~\ref{r:Dynkincase} it follows that
the result holds for this case.

Suppose that the result holds for
every companion basis associated to an exchange matrix $B$.
Fix $1\leq k\leq n$ and let $\B'=\{\beta'_1,\beta'_2,\ldots ,\beta'_n\}$
be a companion basis for $\mu_k(B)$. By Remark~\ref{r:mutatedall},
$\B'$ is of the form $\mu_k(\B)$ for some companion basis
$\B=\{\beta_1,\beta_2,\ldots ,\beta_n\}$ for $B$.

By assumption, the result holds for $\B$, i.e.\ there
is a group isomorphism $\tau_{\B}:W\rightarrow W_{\Gamma(B)}$ taking
$s_{\beta_i}$ to $s_i$ for $1\leq i\leq n$.
If there is no arrow from $i$ to $k$ in $\Gamma(B)$
then $\beta'_i=\beta_i$. So,
$\tau_{\B}(s_{\beta'_i})=\tau_{\B}(s_{\beta_i})=s_i=t_i$.
If there is an arrow from $i$ to $k$ in $\Gamma(B)$,
we have:
$$
s_{\beta'_i} = s_{s_{\beta_k}(\beta_i)}
= s_{\beta_k}s_{\beta_i}s^{-1}_{\beta_k}
= s_{\beta_k}s_{\beta_i}s_{\beta_k},
$$
so $\tau_{\B}(s_{\beta'_i})=s_ks_is_k=t_i$.

From the proof of Theorem~\ref{t:isomorphism}, there
is an isomorphism $\psi:W_{\Gamma(B)}\rightarrow W_{\Gamma(B')}$.
If the defining generators of $W_{\Gamma'}$ are denoted $s'_i$, then
we have $\psi(t_i)=s'_i$. Hence $\tau_{\B'}:=\psi\circ \tau_{\B}:W\rightarrow
W_{\Gamma'}$ is a group isomorphism taking $s_{\beta'_i}$ to $s'_i$.
Therefore, the result holds for $\B'$.

The result follows in general by using induction on the number of
mutations needed to obtain $B$ from a fixed exchange matrix whose
Cartan counterpart is a Cartan matrix of type $\Delta$.
\end{proof}

Finally, we explain how the perspective of companion bases allows an
alternative proof of Theorem~\ref{t:isomorphism} in the simply-laced case
using results from~\cite{cst}.
Let $B$ be an exchange matrix of a cluster algebra of finite simply-laced type
with corresponding diagram $\Gamma$.
Let $\B=\{\beta_1,\beta_2,\ldots ,\beta_n\}$ be a companion basis for $B$
and let $A$ be the corresponding matrix with $A_{ij}=(\beta_i,\beta_j)$.

Let $G$ be the unoriented graph on vertices $1,2,\ldots ,n$
with an edge between $i$ and $j$ whenever $A_{ij}\not=0$. We define a map
$f$ from the set of edges of $G$ to $\{1,-1\}$ by setting $f(\{i,j\})$ to
be the sign of $A_{ij}$ whenever $A_{ij}\not=0$. In this way we obtain a
signed graph $\G(\B)=(G,f)$ corresponding to $\B$, whose underlying
unsigned graph coincides with the underlying unoriented graph of $\Gamma$.

Note that $A$ is positive definite, since $\B$ is a companion basis.
By~\cite[Prop.\ 1.4]{bgz} or a remark in the proof of~\cite[Thm.\ 6.10]{cst},
this implies that every chordless cycle in $(G,f)$
has an odd number of positive signs.

We recall the definition of local switching from~\cite{cst}. Since we
are in the positive definite case, this takes a simpler form; see the
comment after Remark 4.6 in~\cite{cst}.

\begin{definition} \label{d:localswitching} \cite[4.3]{cst}
Let $(G,f)$ be a signed graph and suppose that the symmetric matrix
$A=A_{ij}$ with $A_{ij}=f({i,j})$ and $A_{ii}=2$ for all $i,j$ is
positive definite. Fix a vertex $k$ of $G$ and let $I$ be a subset of the
neighbours of $k$ in $G$. Let $J$ be the set of neighbours of $k$ in $G$
that do not lie in $I$. Then the \emph{local switching} of $(G,f)$ at $k$
is given by the following operations:
\begin{enumerate}
\item[(i)] Delete all edges of $G$ between $I$ and $J$.
\item[(ii)] For any $i\in I$ and $j\in J$ not originally joined in $G$, introduce
an edge $\{i,j\}$ with sign chosen so that the $3$-cycle between $i,j,k$
has an even number (i.e.\ zero or two) of positive signs.
\item[(iii)] Change the signs of all edges between $k$ and elements of $I$.
\item[(iv)] Leave all other edges and signs unchanged.
\end{enumerate}
\end{definition}

\begin{lemma} \label{l:switchismutation}
Fix a vertex $k$ of $\Gamma$. Let $I$ be
the set of vertices $i$ of $\Gamma$ for which there is an arrow
$i\rightarrow k$ in $\Gamma$, so that $J$ is the set of vertices
$j$ of $\Gamma$ for which there is an arrow $k\rightarrow j$ in $\Gamma$.
Then $(G',f')=\G(\mu_k(\B))$ coincides with the result of locally
switching $(G,f)$ at $k$ with respect to $I$.
\end{lemma}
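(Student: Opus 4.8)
The plan is to compare $(G',f')=\G(\mu_k(\B))$ and the local switching of $(G,f)$ at $k$ with respect to $I$ one pair of vertices at a time. Write $\mu_k(\B)=\{\gamma_1,\dots,\gamma_n\}$, so that $\gamma_i=s_{\beta_k}(\beta_i)$ when $i\in I$ (there is an arrow $i\to k$ in $\Gamma$) and $\gamma_i=\beta_i$ otherwise; in particular $\gamma_k=\beta_k$, since $\Gamma$ has no loop at $k$. Put $A_{pq}=(\beta_p,\beta_q)$ and $A'_{pq}=(\gamma_p,\gamma_q)$, so the edge $\{p,q\}$ lies in $G$ (resp.\ $G'$) exactly when $A_{pq}\ne 0$ (resp.\ $A'_{pq}\ne 0$), with sign equal to the sign of that entry. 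Since we are in the simply-laced case, every root has square length $2$, so $\beta_k^{\vee}=\beta_k$, $(\beta_k,\beta_k)=2$, $s_{\beta_k}(\beta_p)=\beta_p-(\beta_p,\beta_k)\beta_k$, and $s_{\beta_k}$ is orthogonal; moreover every off-diagonal entry of $A$ is $0$ or $\pm1$, and $A$ is positive definite because $\B$ is a companion basis.

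First I would dispose of the many easy pairs. If neither of $p,q$ lies in $I$ then $\gamma_p=\beta_p$ and $\gamma_q=\beta_q$, so $A'_{pq}=A_{pq}$. If $p,q\in I$ then $A'_{pq}=(s_{\beta_k}\beta_p,s_{\beta_k}\beta_q)=A_{pq}$ by orthogonality. If $p\in I$ but $q$ is not a neighbour of $k$ (so $A_{kq}=0$ and $q\ne k$), then $A'_{pq}=A_{pq}-(\beta_p,\beta_k)A_{kq}=A_{pq}$. Hence every edge not incident to $k$ and not joining $I$ to $J$ is unchanged, which is exactly what the local switching does to such edges. For the edges at $k$: if $i\in I$ then $A'_{ki}=(\beta_k,\beta_i)-(\beta_i,\beta_k)(\beta_k,\beta_k)=A_{ki}-2A_{ki}=-A_{ki}$, so the sign of $\{k,i\}$ is reversed, matching Definition~\ref{d:localswitching}(iii); if $j\in J$ then $\gamma_j=\beta_j$ and $A'_{kj}=A_{kj}$, matching (iv); and a non-neighbour of $k$ remains a non-neighbour.

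The only real work is the case of a pair $i\in I$, $j\in J$, where $A'_{ij}=A_{ij}-A_{ik}A_{kj}$ with $|A_{ik}|=|A_{kj}|=1$. If $i$ and $j$ are not joined in $\Gamma$, so $A_{ij}=0$, then $A'_{ij}=-A_{ik}A_{kj}=\pm1$, so a new edge $\{i,j\}$ appears, of sign $-f(\{i,k\})f(\{k,j\})$; this is precisely the sign demanded by Definition~\ref{d:localswitching}(ii), since it makes the $3$-cycle on $i,j,k$ --- read with $\{i,k\}$ and $\{k,j\}$ carrying their signs in $(G,f)$, as in (ii) --- have an even number of positive signs. If $i$ and $j$ are joined, then $\{i,j,k\}$ is a chordless $3$-cycle of $\Gamma$, hence cyclically oriented by Proposition~\ref{p:cycles}, so Proposition~\ref{p:cyclecondition} (applicable because $A$ is positive definite) shows that the product of the three entries $A_{ik}$, $A_{kj}$, $A_{ji}$ is positive; as each of them is $\pm1$ this forces $A_{ij}=A_{ji}=A_{ik}A_{kj}$ and hence $A'_{ij}=0$, so the edge is deleted, matching Definition~\ref{d:localswitching}(i). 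Assembling these cases shows that $(G',f')$ and the local switch of $(G,f)$ at $k$ with respect to $I$ have the same edges and the same sign function.

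I expect the sign bookkeeping in this last case to be the main obstacle: one must pin down that step (ii) of local switching refers to the $3$-cycle with the \emph{pre-switch} signs on the two edges through $k$ (a short check, e.g.\ on $A_3$ mutated at its middle vertex, confirms this reading), and one must invoke Proposition~\ref{p:cyclecondition} to exclude the competing value $\pm2$ for $A'_{ij}$ when $i$ and $j$ are joined. Everything else reduces to the orthogonality of $s_{\beta_k}$ and to the fact that in the simply-laced case the relevant inner products all lie in $\{0,\pm1\}$.
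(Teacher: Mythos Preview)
Your proof is correct and follows essentially the same strategy as the paper's: a pair-by-pair comparison of the two signed graphs, with Proposition~\ref{p:cyclecondition} supplying the key sign constraint on $3$-cycles. The main difference is one of packaging. The paper first invokes Proposition~\ref{p:mutatecompanionsimplylaced} (Parsons' result that $\mu_k(\B)$ is a companion basis for $\mu_k(B)$) to identify the underlying graph $G'$ with that of $\mu_k(\Gamma)$, and then argues somewhat sketchily (``easy to see'') that local switching produces the same graph and the same signs, appealing to Proposition~\ref{p:cyclecondition} applied to $\mu_k(B)$ for the sign of a newly created edge. You instead compute $A'_{pq}=(\gamma_p,\gamma_q)$ directly in every case, which makes your argument more explicit and self-contained: you do not need Parsons' theorem as an input, and you obtain the sign of a new edge $\{i,j\}$ by direct calculation rather than by quoting the cycle condition for $\B'$. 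Both routes use Proposition~\ref{p:cyclecondition} (applied to $\B$) to handle the case $i\in I$, $j\in J$ already joined in $\Gamma$, which is where the real content lies.
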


\begin{proof}
By Proposition~\ref{p:mutatecompanionsimplylaced}, $G'$ is the
underlying unoriented graph of $\mu_k(\Gamma)$. Let $(G'',f')$ be
the result of locally switching $(G,f)$ at $k$ with respect to $I$.
Then it is easy to see from Definition~\ref{d:localswitching} that
$G''$ and $G'$ coincide as unoriented graphs.
Let $\mu_k(\B)=(\beta'_1,\ldots ,\beta'_n)$.
If $i\in I$ then
$$(\beta'_i,\beta'_k)=(s_{\beta_k}(\beta_i),\beta_k)=-(\beta_i,\beta_k)$$
and we see that the signs on the edges of $G'$ incident with $k$ and a
vertex in $I$ coincide with the signs on the corresponding edges of $G''$.
If $\{i,j\}$ is an edge in $G''$ arising from step (ii) in
Definition~\ref{d:localswitching} its sign is chosen to ensure that the
$3$-cycle on $i,j,k$ has an odd number of positive signs after step
(iii) has been applied. Since $(G',f')$ must also have this property
(using Proposition~\ref{p:cyclecondition} and the fact that $\B'$ is a
companion basis).
It is easy to see that the other signs on the edges of $G'$ and $G''$ coincide,
and the result follows.
\end{proof}

Since the signed graph $\G(\B)$ has an odd number of positive signs in every
cycle (and noting Proposition~\ref{p:opposite}) we have that the group
$\Cox(G,f)$ defined in~\cite[Defn.\ 6.2]{cst}
coincides with the group $W_{\Gamma}$ defined in Section~\ref{s:groupdefinition},
in the simply-laced case.
Then Theorem~\ref{t:isomorphism} (in the simply-laced case) follows by
combining~\cite[Thm.\ 6.10]{cst} and~\cite[Thm.\ 6.1]{parsons} (see
Proposition~\ref{p:mutatecompanionsimplylaced}) with Lemma~\ref{l:switchismutation}.

\vspace{0.4cm}

\noindent \textbf{Acknowledgements:}
This work was partly carried out while both authors were visiting the Institute
for Mathematical Research (FIM) at the ETH in Zurich. Both authors would
like to thank the FIM for its support
and, in particular, Karin Baur for her kind hospitality during the visit.
The second named author would like to thank Konstanze Rietsch for some
helpful conversations. We would also like to thank the referee for some helpful
comments, which simplified the first version of this paper.

\end{document}